\newcommand{\bR}{{\bf R}}
\newcommand{\bS}{{\bf S}}
\newcommand{\bT}{{\bf T}}
\newcommand{\bRN}{{\bf RN}}
\newcommand{\cL}{{\mathcal L}}
\newcommand{\cN}{{\mathcal N}}
\newcommand{\cP}{{\mathcal P}}
\newcommand{\cS}{{\mathcal S}}
\newcommand{\cU}{{\mathcal U}}
\newcommand{\sC}{{\mathsf C}}
\newcommand{\sL}{{\mathsf L}}
\DeclareMathOperator{\sgn}{sgn}
\definecolor{gray85}{gray}{0.85} 
\definecolor{gray8}{gray}{0.8} 
\definecolor{gray7}{gray}{0.7} 
\definecolor{gray6}{gray}{0.6} 
\definecolor{gray5}{gray}{0.5} 
\definecolor{gray4}{gray}{0.4} 
\definecolor{gray35}{gray}{0.35} 
\newcommand{\Sources}{\bS}
\newcommand{\Targets}{\bT}
\newcommand{\eps}{\varepsilon}
\newcommand{\red}[1]{#1}
\DeclareMathOperator{\Tr}{Tr}
\DeclareMathOperator{\length}{length}
\DeclareMathOperator{\sd}{sd} 
\newcommand{\til}{\widetilde}
\newcommand{\ol}{\overline}
\newcommand{\bs}{{\bf s}}
\newcommand{\SWITCH}{\mathsf{SWITCH}}
\newcommand{\LCC}{\mathsf{LCC}}
\newcommand{\In}{\mathsf{In}}
\newcommand{\off}{\mathsf{off}}
\newcommand{\on}{\mathsf{on}}
\newcommand{\parity}{\mathsf{par}}
\newcommand{\Eq}{\mathsf{Eq}}
\newcommand{\one}{\mathbbm{1}}
\title{Equilibria and their stability in networks with steep sigmoidal nonlinearities}
\author{William Duncan, Tomas Gedeon, Hiroshi Kokubu, Konstantin Mischaikow, and Hiroe Oka}
\date{\today}							
\begin{document}

\maketitle

 \begin{abstract}
 In this paper we investigate equilibria of continuous differential equation models of network dynamics. The motivation comes from gene regulatory networks where each directed edge represents either down- or up-regulation, and is modeled by a sigmoidal nonlinear function. We show that the  existence and stability of equilibria of a sigmoidal system is determined by a combinatorial analysis of the limiting switching system with piece-wise constant non-linearities. In addition, we describe  a local decomposition of a switching system  into a product of simpler cyclic feedback systems, where the cycles in each decomposition correspond to a particular subset of network loops.  
\end{abstract}

\section{Introduction}
Analysis of large systems of ordinary differential equations is difficult, especially when we seek to understand changes in dynamics when parameters  change. To set the stage, we are interested in systems of O(10) differential equations with the same order of magnitude of number of parameters; big enough to be complicated but not so large that statistical approaches may be applicable.
Systems of this size are important in systems biology, in particular, in models of gene regulation. Here variables usually represent concentrations of chemical species (mRNA, proteins) in the cell, and the interactions between variables are represented in a form of a network with signed directed edges. Nodes represent concentrations and directed edges monotone interactions; positive edges indicate activation and negative edges repression. The interactions are nonlinear; both on the level of the pairwise effect of  $x_i$ on $x_j$ which is usually modeled by a saturating function, but also on the level of how effects from different inputs combine together to influence $x_j$.  The choice of the nonlinearity that models  the effect of $x_i$ on $x_j$ is not given by any fundamental law of physics; the usual choices are Hill functions, but other sigmoid functions and threshold (switching) functions are used as well. 
This ambiguity, coupled with technical challenges related to simultaneous measurement of time evolution of multiple chemical species in a single cell, necessarily limits the expectation of fidelity of the model simulations with the experimental data. The model should not be expected to reproduce measurements in fine detail, but still answer qualitative questions on number and types stable equilibria are present, or capacity of the network to admit oscillations.
Taking into consideration ever present noise in molecular systems, there is always a need to address a question of how robust these qualitative features are under parameter changes in the model.

In this paper we concentrate on the existence and stability of equilibria in networks where the pairwise interaction is modeled by \emph{sigmoidal} nonlinearities. 
Extending results~\cite{snoussi93,veflingstad07,ironi11}, we show that  the equilibria in a network modeled by  sufficiently steep sigmoidal functions are in one-to-one correspondence with a collection of so called regular and singular equilibria of a model of the same network using  \emph{switching} functions.
Switching functions are  piece-wise constant functions with a single threshold
and range with two values $\{L,U\}$, which can be interpreted as two rates  of expression (L for Lower, and U for Upper) of the target gene based on whether the controlling gene is below, or above the threshold $\theta$. These models have been used for  gene regulatory networks  since the 70's~\cite{glass:kauffman:73, Glass1978, deJong2002,Thomas1991, edwards00, cummins16,ironi11,Gedeon2020}. However, using these functions as the right hand side of an ODE system presents several technical challenges, especially how to deal with the fact that the  vector field is not defined at thresholds $\theta$. \red{One approach to extend switching systems so that they are defined at thresholds is to view them as differential inclusions rather than ordinary differential equations. Stability of singular equilibria of the switching systems from this point of view were studied by \cite{casey06}.
Convergence of switching systems dynamics was studied also by \cite{pasquini20} who formulated conditions that are needed to construct a global Lyapunov function for a switching system by piecing together local Lyapunov functions. }  The idea of the new DSGRN (Dynamic Signatures Generated by Regulatory Networks) approach~\cite{cummins16,Gedeon18,Gedeon2020}, supported by a suite of corresponding software~\cite{DSGRNgithub}, is to capture information about the network dynamics given by switching system models in a form of combinatorial (finite) data,  and then use this data to rigorously establish results about well-defined dynamics of ODE's with continuous right hand side that are a small perturbation of the switching functions. \red{We emphasize that the distinction in the two approaches is that the Filippov extension approach is concerned with the dynamics of switching systems whereas the DSGRN approach only uses switching systems as a computational tool for the study of the dynamics of sigmoidal systems. }

While we describe the combinatorial data in greater detail below, for the purpose of this introduction it is sufficient to note that the switching system ODE only contains stable equilibria (which we call regular), because the unstable equilibria of sigmoidal systems limit to intersections of thresholds of switching systems. In this paper we show that the intersections,
the intersections that appear as such limits of unstable equilibria, which we call \emph{loop characteristic equilibrium cells}, can be precisely characterized using combinatorics of the switching system. In particular, we add to the DSGRN approach by showing how to use the combinatorial data from a switching system to predict existence and stability of all equilibria for all sigmoidal functions that \red{arise as perturbations} to the switching functions. The combinatorial data only uses the type (i.e. positive feedback or negative feedback) and the number of feedback loops in the network. To obtain these results we show that switching systems can be locally (in phase space) decomposed into a product of simpler cyclic feedback systems.  Each such cyclic system can be associated to a unique oriented loop in the gene regulatory network.

 A potential  application of description of  equilibria and their stability in sigmoidal systems   is in the  recurrent artificial neural networks (rANN). These models were  introduced by Hopfield~\cite{Hopfield1982} and Grossberg~\cite{Grossberg1988} almost 40 years ago, but they found their newest incarnation as Echo state networks~\cite{Jaeger2004}. There is a great variety of implementations but at the core there are  network nodes (i.e. neurons) that are connected by weighted directed edges. 
 Each node processes the input through a  nonlinear function (binary, sigmoidal, or a ramp).   Our work provides a  characterization of the number of stable equilibria for steep sigmoidal functions baased on the combinatorics of the switching system, which is ultimately tied to the structure of the connections in the network.

\subsection{Organization of the paper}
In Section \ref{sec: RN}, we define sigmoidal functions and switching functions. In Section~\ref{sec: eq cells}, we define the combinatorial data associated to switching functions and show how this data can be used to identify all equilibria of steep sigmoidal systems. 
The proofs for these results are in Section~\ref{sec: eq cell proofs}. In Section~\ref{sec: cfs results}, we use the results of Section \ref{sec: eq cells} to characterize the equilibria of cyclic feedback networks and then analyze their stability. 
The proofs can be found in Section~\ref{sec: cfs results proofs}. In Section~\ref{sec: gen sys results}, we show that all switching systems can be locally (in phase space) decomposed as a product of cyclic feedback systems and use this decomposition to generalize the stability results of Section~\ref{sec: cfs results} from cyclic feedback systems to general networks. Finally, in Section~\ref{sec: discussion}, we conclude with a discussion of our results.

\section{The Regulatory Network and Switching Systems}\label{sec: RN}
\begin{definition}[\cite{cummins16}]
A \emph{regulatory network} $\bRN = (V,E)$ is an annotated finite directed graph with vertices $V = \{1,\ldots, N\}$ called \emph{network nodes} and directed edges $E\subset V\times V \times \{1,-1\}$. 
 An annotated edge  $(j, i,+1)$ represents an \emph{activation} of node $i$ by  node $j$ and is denoted $j\to i$;  annotated edge   $(j, i,-1)$ represents \emph{repression} of node $i$ by  node $j$ and is denoted $j\dashv i$. 
We write $\bs_{ij}=1$ if $j\to i$ and $\bs_{ij} = -1$ if $j\dashv i$. We indicate either $j\to i$ or $j\dashv i$ without specifying which by writing $(j,i)\in E$. We allow self edges, but admit at most one edge between any two nodes. The set of \emph{sources} and \emph{targets} of a node are denoted by 
\[\Sources(k) = \{j\;|\; (j,k)\in E\} \qquad \mbox{and} \qquad \Targets(k) = \{j\;|\; (k,j)\in E\}\]  
and we require every node has a target. 
 \end{definition}
 
 We remark that the assumption that every node has a target is not a serious constraint. If a node $j$ does not have a target, then the dynamics of the remaining nodes are independent of $j$. Once the dynamics of the remaining nodes are understood, the dynamics of node $j$ can be treated as a non-autonomous system driven by the remaining nodes. 
 
 To an $\bRN$ we associate a \emph{switching system} of the form
 \begin{align}\label{eq: switch sys}
  \dot{x} = -\Gamma x + \Lambda(x)
 \end{align}
 where $\Gamma$ is a diagonal matrix with entries $\Gamma_{jj} = \gamma_j$ and $\Lambda$ is a nonlinear function of the form 
 \begin{align}\label{eq: Lambda}
  \Lambda_i(x) := \prod_{\ell=1}^{p_i}\sum_{j\in I_\ell} \sigma_{ij}(x_j)
 \end{align}
 with $I_1,\ldots,I_{p_i}$ a partition of $\Sources(i)$. Each $\sigma_{ij}$ is a \emph{switching function} of the form
\begin{align}\label{eq: sigma}
 \sigma_{ij}(x_j) := \begin{cases}
                     L_{ij}, \quad & \bs_{ij} = 1 \mbox{ and } x_j<\theta_{ij} \mbox{ or } \bs_{ij}=-1 \mbox{ and } x_j>\theta_{ij} \\
                     U_{ij}, \quad & \bs_{ij}=1 \mbox{ and } x_j>\theta_{ij} \mbox{ or } \bs_{ij}=-1 \mbox{ and } x_j<\theta_{ij} \\
                     \mbox{undefined}, \quad & \mbox{ if } x_j=\theta_{ij}.
                    \end{cases}
\end{align} 
\newcommand{\switch}{\mathsf{SWITCH}}
The parameter $Z= (L,U,\theta,\Gamma)$, where $L:=(L_{ij})$, $U:=(U_{ij})$,  $\theta:= (\theta_{ij})$ are vectors indexed by $(ij)$, is the \emph{switching parameter}. We denote a switching system parameterized by $Z$ by $\SWITCH(Z)$.

To an $\bRN$ we also associate a \emph{sigmoidal system}, $\cS(Z,\eps)$, where $Z$ is a switching parameter and $\eps\in \bR^{N\times N}$ is a \emph{perturbation parameter}. We say $\eps'\leq \eps$ or $\eps'<\eps$ when the component-wise comparisons $\eps_{ij}'\leq \eps_{ij}$ or $\eps_{ij}'<\eps_{ij}$ hold for each $(j,i)\in E$, respectively. The pair $(Z,\eps)$ is the \emph{sigmoidal parameter}. The dynamics of $\cS(Z,\eps)$ are given by 
\begin{align}\label{eq: sigmoid sys}
    \dot{x} = -\Gamma x + \Lambda(x;\eps)
\end{align}
where $\Lambda(x;\eps)$ is obtained from $\Lambda$ by replacing the switching functions $\sigma_{ij}$ with \emph{sigmoidal perturbations} $\sigma_{ij}(\cdot;\eps_{ij})$, which we define below.

\begin{definition}\label{defn: sigmoid}
    $\sigma_{ij}(\cdot;\eps_{ij})$ is a family of \emph{sigmoidal perturbations} of $\sigma_{ij}$ at a parameter $Z$ if for each $\eps_{ij}\in \bR_+$, 
    \begin{enumerate}
        \item $\sigma_{ij}(\cdot;\eps_{ij})$ is continuously differentiable and monotone non-increasing or monotone non-decreasing,
        \item $\sup_x\,\sigma_{ij}(x;\eps_{ij}) = U_{ij}$ and $\inf_x\,\sigma_{ij}(x;\eps_{ij}) = L_{ij}$,
        \item There is a neighborhood $\cU_1(\eps_{ij})\subset\bR$ of $\theta_{ij}$ such that $\cU_1(\eps_{ij}) \to \{\theta_{ij}\}$ as $\eps_{ij}\to 0$ and a constant $C_1>0$, such that if $x\in \bR\setminus\cU_1(\eps_{ij})$, then $|\sigma_{ij}'(x;\eps_{ij})| \leq C_1 \eps_{ij}$.  
        \item There is a neighborhood $\cU_2(\eps_{ij}) \subset \bR$ of $\theta_{ij}$ such that $\cU_2(\eps_{ij}))\to \{\theta_{ij}\}$ \red{and $\sigma_{ij}(\cU_2(\eps_{ij});\eps_{ij}) \to (L_{ij},U_{ij})$} as $\eps_{ij}\to 0$ and a constant $C_2>0$ such that if $x\in U_2(\eps_{ij})$ then $|\sigma_{ij}'(x;\eps_{ij})| \geq C_2 \eps_{ij}^{-1}$.
    \end{enumerate}
\end{definition}

 Given a perturbation parameter $\eps$, we will write $\sigma_{ij}(\cdot;\eps)$ instead of  $\sigma_{ij}(\cdot;\eps_{ij})$ to simplify notation. Note that \red{as $\eps \to 0$, the sigmoidal perturbation $\sigma_{ij}(\cdot;\eps)$ converges pointwise to the step function $\sigma_{ij}$. }
 Given a switching parameter $Z$ and perturbation parameter $\eps\geq 0$, we denote the Jacobian of \eqref{eq: sigmoid sys} at $x$ by $J(x;\eps)$ or $J(\eps)$ when $x$ is implied from context.

\begin{example}\label{ex: positive toggle}
Throughout the paper we will illustrate the concepts on a simple example of a two node network we call the \emph{positive toggle plus}, where two nodes activate themselves and mutually activate each other, i.e.
\[
    \bRN = (V,E) = (\{1,2\}, \; \{(1\to 1), (2\to 2), (1\to 2), (2\to 1)\}). 
\]
The name "positive toggle" refers to the network without self loops and was chosen for its resemblance to the toggle switch introduced in \cite{gardner00}, in which the nodes mutually repress each other rather than activate. The "plus" modifier refers to the addition of the self loops. The associated switching system has the form
\begin{align*}
    \dot{x}_1 &= -\gamma_1 x_1 + \sigma_{11}(x_1)\sigma_{12}(x_2) \\
     \dot{x}_2 &= -\gamma_2 x_2 + \sigma_{22}(x_2)\sigma_{21}(x_1)
\end{align*}
and we note $s_{11} = s_{22} = s_{12}=s_{21} = 1$. \red{We will consider this system with a switching parameter satisfying
\begin{align*}
    &L_{11}L_{12} < L_{11}U_{12} < \gamma_1\theta_{21} < U_{11}L_{12} < \gamma_1\theta_{11} < U_{11}U_{12}, \mbox{ and }\\
    &L_{22}L_{21}< \gamma_2\theta_{12} < L_{22}U_{21} < U_{22}L_{21} < U_{22}U_{21}<\gamma_2\theta_{22}.
\end{align*}
}
\end{example}

\section{Equilibria of Regulatory Networks}\label{sec: eq cells}

 As observed in \cite{cummins16}, the thresholds $\theta_{ij}$ of a switching system impose a grid-like structure on phase space $\bR^N_+$. In this section we characterize where the equilibria lie in phase space relative to this structure. We begin by defining the structure.

\begin{definition}\label{defn: cell complex}
\begin{enumerate}
\item For each $j\in V$, we define $\theta_{-\infty j} := 0$, $\theta_{\infty j} := \infty$, and 
\[
\Theta_j(Z) := \{\theta_{ij} > 0\;|\; i\in \Targets(j)\}\cup \{\theta_{\infty j}, \theta_{-\infty j}\}.
\] 
The \emph{threshold set} is the collection $\Theta(Z) := (\Theta_1(Z),\ldots,\Theta_N(Z))$. We say $\theta_{i_1 j},\theta_{i_2 j} \in \Theta_j(Z)$ are \emph{consecutive thresholds} if $\theta_{i_1 j} < \theta_{i_2 j}$ and there does not exist $\theta_{i_3 j}\in \Theta_j(Z)$ such that $\theta_{i_1 j}<\theta_{i_3 j}<\theta_{i_2 j}$. 

\item A \emph{cell}, $\tau$ associated to a threshold set $\Theta$, is a product of $k\leq N$ thresholds and $N-k$ open intervals whose endpoints are consecutive thresholds. By renumbering the variables we write
\begin{align*}
\tau = \prod_{j=1}^k \{\theta_{i_j j}\} \times \prod_{j=k+1}^N (\theta_{a_j j},\theta_{b_j j}).
\end{align*}
We write $\pi_j(\tau)$ for the projection of $\tau$ onto the $j$th direction. A cell is \emph{regular} if $k=0$ and \emph{singular} otherwise. The \emph{cell complex}, $\chi(\Theta)$, is the collection of all cells associated to the threshold set $\Theta$. The cell complex associated to the switching system \ref{eq: switch sys} at parameter $Z$ is $\chi(\Theta(Z))$. When the switching parameter $Z$ and the threshold set $\Theta(Z)$ are clear from context we drop the argument $\Theta(Z)$ and write $\chi$.

 Figure~\ref{fig: neighbors}(a) depicts the cell complex $\chi$ for the positive toggle plus. The concept of neighboring cells is described below.
 \end{enumerate}
\end{definition}

\def\psz{1mm}
\pgfdeclarepatternformonly{dashed vertical}{\pgfpointorigin}{\pgfpoint{\psz}{\psz}}{\pgfpoint{1.5*\psz}{1.5*\psz}}{
\pgfsetlinewidth{.15mm}
\pgfpathmoveto{\pgfpoint{.5*\psz}{0mm}}
\pgfpathlineto{\pgfpoint{.5*\psz}{\psz}}
\pgfpathclose
\pgfusepath{stroke}
}

\begin{figure}[htbp!]
\begin{tabular}{cc}
  \begin{tikzpicture}[scale = 1.8]
    \tikzstyle{line} = [-,very thick]
    \tikzstyle{arrow} = [->,line width = .4mm]
    \tikzstyle{unstable} = [red]
    \tikzstyle{stable} = [blue]
    \tikzstyle{pt} = [circle,draw=black,fill = black,minimum size = 2pt];
    \tikzstyle{labeled pt} = [circle, draw = black, fill = gray!50!white, minimum size = 2pt, line width = .25mm];
    \tikzstyle{cone pt} = [circle, draw = black, minimum size = 1pt, fill = blue];
    \def\epsshift{.3}
      
    \node[pt] at (0,0) (00) [label =below: 0,label = west: 0] {};
    \node[labeled pt] at (1,1) (11) {};
    \node at (1.2,.8) {\Large $\kappa_2^-$};
    
    \node[pt] at (1,0) (10) [label = below:$\theta_{21}$ ] {};
    \node[pt] at (2,0) (20) [label = below:$\theta_{11}$ ] {};

    \node[pt] at (2,1) (21) {};
    \node[pt] at (2,2) (22) {};
    \node[pt] at (2,1) (21) {};
    \node[labeled pt] at (1,2) (12) {};
    \node at (1.2,2.2) {\Large $\kappa_2^+$};
    
    \node[pt] at (3,0) (30) [label = below: $\infty$] {};
    \node[pt] at (3,1) (31) {};
    \node[pt] at (3,2) (32) {};

    \node[pt] at (0,1) (01)  [label = west: $\theta_{12}$] {};
    \node[pt] at (0,2) (02)  [label = west: $\theta_{22}$] {};
    \node[pt] at (0,3) (03) [label = west: $\infty$] {};
    \node[pt] at (1,3) (13) {};
    \node[pt] at (2,3) (23) {};
    \node[pt] at (3,3) (33) {};
    
    \node at (.5,1.5) {\Large $\kappa_1^-$};
    \node at (1.5,1.5) {\Large $\kappa_1^+$};

    \begin{pgfonlayer}{background}
    \draw[line] (00) to (30);
    \draw[line] (00) to (03);
    \draw[line] (10) to (11);
    \draw[line] (12) to (13);
    \draw[line] (20) to (23);
    \draw[line] (30) to (33);
    
    \draw[line] (01) to (31);
    \draw[line] (02) to (32);
    \draw[line] (03) to (33);
    \draw[line, dashed] (11) to node [right] {\Large $\kappa$} (12);
    \end{pgfonlayer}
  \end{tikzpicture} 
  &
  \begin{tikzpicture}[scale = 1.8]
    \tikzstyle{line} = [-,very thick]
    \tikzstyle{arrow} = [->,line width = .4mm]
    \tikzstyle{unstable} = [red]
    \tikzstyle{stable} = [blue]
    \tikzstyle{pt} = [circle,draw=black,fill = black,minimum size = 2pt];
    \tikzstyle{cone pt} = [circle, draw = black, minimum size = 1pt, fill = blue];
    \tikzstyle{labeled pt} = [circle, draw = black, fill = gray!50!white, minimum size = 2pt, line width = .25mm];
    \def\epsshift{.3}
    \def\arrlen{.3}

      
    \node[pt] at (0,0) (00) [label =below: 0,label = west: 0] {};
    
    \node[pt] at (1,0) (10) [label = below:$\theta_{21}$ ] {};
    \node[pt] at (2,0) (20) [label = below:$\theta_{11}$ ] {};
    
    \node[labeled pt] at (1,1) (11) {};
    \node[pt] at (2,1) (21) {};
    \node[pt, labeled pt] at (2,2) (22) [label = 45: $\til{\tau}$] {};
    \node[pt] at (2,1) (21) {};
    \node[pt] at (1,2) (12) {};
    \node[pt] at (3,0) (30) [label = below: $\infty$] {};
    \node[pt] at (3,1) (31) {};
    \node[pt] at (3,2) (32) {};

    \node[pt] at (0,1) (01)  [label = west: $\theta_{12}$] {};
    \node[pt] at (0,2) (02)  [label = west: $\theta_{22}$] {};
    \node[pt] at (0,3) (03) [label = west: $\infty$] {};
    \node[pt] at (1,3) (13) {};
    \node[pt] at (2,3) (23) {};
    \node[pt] at (3,3) (33) {};
    
    \node at (.8,.8) {\Large $\tau$};
    \node at (.5,.5) {\Large $\tau'$};
    \node at (.5,1.5) {\Large $\tau''$};
    
    \begin{pgfonlayer}{background}
    \draw[line] (00) to (30);
    \draw[line] (00) to (03);
    \draw[line] (10) to (13);
    \draw[line] (20) to (23);
    \draw[line] (30) to (33);
    
    \draw[line] (01) to (31);
    \draw[line] (02) to (32);
    \draw[line] (03) to (33);
    \end{pgfonlayer}

    \draw[arrow,densely dotted] (.5,1) to (.5,1-\arrlen);
    \draw[arrow] (1,.5) to (1-\arrlen,.5);
    \draw[arrow] (1.5,1) to (1.5,1+\arrlen);
    \draw[arrow] (1,1.5) to (1-\arrlen,1.5);
    \draw[arrow] (.5,2) to (.5,2-\arrlen);
    \draw[arrow] (2,.5) to (2-\arrlen,.5);
    \draw[arrow] (1.5,2) to (1.5,2-\arrlen);
    \draw[arrow] (2.5,2) to (2.5,2-\arrlen);
    \draw[arrow] (2.5,1) to (2.5,1+\arrlen);
    \draw[arrow] (1,2.5) to (1-\arrlen,2.5);
    \draw[arrow] (2,2.5) to (2-\arrlen,2.5);
    \draw[arrow] (2,2.5) to (2+\arrlen,2.5);
    \draw[arrow] (2,1.5) to (2-\arrlen,1.5);
    \draw[arrow] (2,1.5) to (2+\arrlen,1.5);

  \end{tikzpicture} \\
  (a) & (b)
\end{tabular}
\caption{ \textbf{The cell complex $\chi$, neighbors, and the labeling map in the positive toggle plus network.} \red{The network and switching system are defined in Example \ref{ex: positive toggle}.}
\textbf{(a):} Each box, line, and point is a cell in the cell complex $\chi$.  The cell $\kappa=\{\theta_{21}\}\times(\theta_{12},\theta_{22})$ is indicated by the dashed line. For $\kappa$, the direction $1$ is  singular and the direction $2$ is regular. \red{ The 2-neighbors of $\kappa$ (see Definition \ref{defn: neighbors}),} $\kappa_2^+$ and $\kappa_2^-$, are indicated by the gray circles while  \red{the 1-neighbors,} $\kappa_1^-$ and $\kappa_1^+$, are the labeled  two dimensional cells. \textbf{(b):} The loop characteristic cell\red{s} $\tau$ \red{and $\til{\tau}$ are} indicated by the gray circle\red{s} and the regular cells $\tau'$ \red{and $\tau''$ are} labeled. The dotted arrow represents \red{$\cL(\tau,1,-)$} for which $1$ is a singular direction and, at the same time, it represents \red{$\cL(\tau',2,+)$} \red{and $\cL(\tau'',2,-)$} for which 2 is a regular direction. \red{See Example \ref{ex: labeling map} for details}. These values are equal to $-1$ so the arrow points down. The arrows on the outside cell complex  boundary are not drawn because they point inwards for all choices of parameters. The arrows of the labeling map imply that $\Phi_1(\tau) = \Phi_2(\tau') = \red{0}$. \red{The arrows on the top and bottom boundaries of $\tau''$ indicate that $\Phi_2(\tau'') = -1$. }  }\label{fig: neighbors}
\end{figure}

 Observe, that in the switching system, the function $\Lambda$
is only defined on regular cells and not on singular cells. Therefore, equilibria of $\SWITCH(Z)$ can only be contained in regular cells. \red{ Equilibria in singular cells can be understood if \eqref{eq: switch sys} is replaced with its Filippov extension wherein the differential equation is replaced by a differential inclusion. This was done in \cite{casey06} where existence and stability of these singular equilibria was studied. However, our goal is to understand equilibria of the sigmoidal systems $\cS(Z,\eps)$ which are perturbations of the switching system $\SWITCH(Z)$, and not the equilibria of $\SWITCH(Z)$. For this reason, in the next definition we define an equilibrium cell to be a cell which a sigmoidal equilibrium converges to as $\eps\to 0$. It is straightforward to see that regular equilibrium cells contain a unique stable equilibrium of $\SWITCH(Z)$.
 }  

\begin{definition}
  Let $Z$ be a switching parameter and $\tau\in\chi$. If there is an $A\in \bR_+^{N\times N}$ so that for all $\eps< A$, $\cS(Z,\eps)$ has a fixed point $x^{\eps}$ satisfying $x^{\eps} \to \tau$ as $\eps\to 0$, then $\tau$ is an \emph{equilibrium cell}. If $\tau$ is a singular cell, then $x^{\eps}$ is a \emph{singular stationary point} (SSP).
 \end{definition}

 Theorem \ref{thm: FP} characterizes equilibrium cells \red{using combinatorial information about $\SWITCH(Z)$ only. That is, information about the switching system is necessary and sufficient to characterize the equilibria of the sigmoidal system $\cS(Z,\eps)$ when $\eps$ is small enough.} To state the theorem precisely, we need some additional definitions to describe the cell complex and a notion of a flow direction map on the cell complex.  
 
 \subsection{Cell Complex}

\begin{definition}

 \begin{enumerate}

  \item Given $\tau\in \chi$, the \emph{singular directions} of $\tau$, denoted $\sd(\tau)$, correspond to the set of indices, $s$, such that $\pi_s(\tau) = \{\theta_{i_s s}\}$. An index is a \emph{regular direction} if it is not singular. We define $\rho^\tau:V\to V$ by 
 \begin{align*}
  \rho^\tau(j) := \begin{cases}
                  i_j, \quad & j\in \sd(\tau)\\
                  j, \quad &\mbox{ otherwise}. 
                 \end{cases}
 \end{align*}
The set of cells with $k$ singular directions is denoted $\chi^{(N-k)}$. 
The map $\rho^\tau$ depends on the cell $\tau$, but when  $\tau$ is fixed and clear from the context, we will use $\rho$ instead of $\rho^\tau$.

\item  If $s$ is a singular direction of $\tau\in\chi$ we denote the \emph{neighboring thresholds} by $\theta_{\rho_{-}(s) s}$ and $\theta_{\rho_{+}(s) s}$ where $\theta_{\rho_{-}(s) s}<\theta_{\rho(s) s}< \theta_{\rho_{+}(s) s}$ are consecutive thresholds in $\Theta_s(Z)$. If $r$ is a regular direction of $\tau$ we write $\pi_r(\tau) = (\theta_{a_r r},\theta_{b_r r})$.
 \end{enumerate}
\end{definition}

\begin{definition}
  A cell, $\tau\in \chi$, is a \emph{loop characteristic cell} if $\rho^\tau$ is a permutation on $\sd(\tau)$. We denote the set of loop characteristic cells by $\LCC$. Note that all $N$-dimensional cells $\kappa$ are automatically loop characteristic cells, since $\sd(\kappa) = \emptyset$. Therefore $\chi^{(N)}\subset \LCC$.
\end{definition}

\red{A loop characteristic cell is a cell in which some number of disjoint loops of the network are active. For example, the loop characteristic cells of the positive toggle plus network (see Example \ref{ex: positive toggle}) include $\{\theta_{21}\}\times \{\theta_{12}\}$, where the loop $1\to2\to1$ is active, $\{\theta_{11}\}\times (\theta_{12},\theta_{22})$, where the loop $1\to 1$ is active, and $\theta_{11}\times \theta_{22}$, where the loops $1\to 1$ and $2\to 2$ are both active. The concept of loop characteristics was introduced in \cite{snoussi93} where it was shown singular equilibria of switching systems are contained in loop characteristic cells.  
This was later  extended to sigmoidal systems by \cite{veflingstad07} who showed} that equilibrium cells are a subset of loop characteristic cells \red{when the sigmoidal perturbations are taken to be Hill functions}. \red{Theorem \ref{thm: FP} extends this work by considering a more general class of sigmoids and providing necessary and sufficient conditions for a loop characteristic cell to be an equilibrium cell.}

\red{
In the following definition we introduce notation to describe the \emph{neighbor} of a cell. By a neighbor to $\tau$ we mean a cell which is directly adjacent to $\tau$. 
}
\begin{definition}\label{defn: neighbors}
 Let $\tau\in \chi$ and $j\in V$.The \emph{left $j$-neighbor} (see Figure \ref{fig: neighbors}) of the cell $\tau$ is a cell  $\tau_j^-$, defined by 
 \begin{align*}
     \pi_k(\tau_j^-) := \begin{cases}
        \pi_j(\tau), \quad & j\neq k\\
        (\theta_{\rho_-(j)j},\theta_{\rho(j)j}), \quad & j=k,\; k\in\sd(\tau)\\
        \{\theta_{a_j j}\}, \quad & j=k,\; k\notin\sd(\tau). 
     \end{cases}
 \end{align*}
 Similarly, the \emph{right $j$-neighbor}, $\tau_j^+$, is defined by 
 \begin{align*}
     \pi_k(\tau_j^+) := \begin{cases}
       \pi_k(\tau), \quad & j\neq k\\
       (\theta_{\rho(j)j},\theta_{\rho_+(j)j}), \quad & j=k,\; k\in \sd(\tau) \\
       \{\theta_{b_jj}\},\quad & j=k,\; k\notin\sd(\tau).
     \end{cases}
 \end{align*}
 An \emph{$j$-neighbor} of $\tau$ is either a left or right $j$-neighbor of $\tau$. A \emph{neighbor} of $\tau$ is any $j$-neighbor.
\end{definition}
\red{
On a diagram of the cell complex, the left $j$-neighbor of $\tau$, $\tau_j^-$, is the cell directly below $\tau$ in the $j$th direction and the right $j$-neighbor of $\tau$, $\tau_j^+$, is the cell directly above $\tau$ in the $j$th direction. If $j$ is a singular direction of $\tau$ then $j$ is a regular direction of $\tau_j^\pm$. If $j$ is a regular direction of $\tau$ then $j$ is a singular direction of $\tau_j^\pm$. See Figure \ref{fig: neighbors}(a) for an example. 
}

\red{In the remainder of the paper we assume that the thresholds $\theta_{ij}$ are positive. This reflects that our motivation comes from biological networks in which activities or concentrations of a reactant are always non-negative. We also assume that thresholds corresponding to the same node are not equal. This holds generically, greatly simplifies our analysis, and is typical in the literature on switching systems \cite{snoussi93,casey06,farcot09,veflingstad07,ironi11,cummins16,pasquini20}. These assumptions are captured in the following definition.}

\begin{definition}
 The switching parameter $Z$ is \emph{threshold regular} if 
 \begin{itemize}
    \item For all $(j,i)\in E$, $\theta_{ij}>0$, and 
     \item for all $j\in V$, $i_1,i_2\in \Targets(j)$, $\theta_{i_1 j} \neq \theta_{i_2 j}$. 
 \end{itemize}
\end{definition}

\begin{definition}[\red{Definition 4.6 of} \cite{cummins16}]
 Consider a threshold regular switching parameter $Z$. For $j\in V$, denote the ordering of the thresholds $\{\theta_{ij}\;|\; i\in \Targets(j)\}$ by $O_j(Z)$. The \emph{order parameter} of $Z$ is the collection of these orders, $O(Z) = (O_1(Z),\ldots,O_N(Z))$.
\end{definition}

\subsection{Flow Direction Map}

 The goal of this section is to define a  flow direction map on the cell complex $\chi(\Theta(Z))$, which is induced by the right hand side of the switching system (\ref{eq: switch sys}).  We start by introducing some notation. Observe  that  if $j$ is a singular direction 
of  $\tau\in\chi$ and thus $\pi_j(\tau) = \{\theta_{ij}\}$ for some $i$, then by equation \eqref{eq: sigma} the function $\sigma_{ij}(x_j)$ is not defined on $\tau$. However, if $j$ is a regular direction of $\tau$ and thus $\pi_j(\tau) = (\theta_{i_1j}, \theta_{i_2j})$, then $\sigma_{ij}(x_j)$ is constant on $\tau$. We denote its value by $\sigma_{ij}(\tau)$. It follows that for $\tau \in\chi^{(N)}$ which has no singular directions $\sd(\tau) = \emptyset$, the value of $\Lambda_i(\tau)$ is well defined and constant for very $i$. \red{The vector $\Gamma^{-1}\Lambda(\tau)$ is sometimes called the focal point of $\tau$ because any trajectory of the switching system with initial condition in $\tau$ will converge to this value until the trajectory reaches the boundary of $\tau$ \cite{casey06}. In the following definition we give non-degeneracy conditions for the switching parameter $Z$ which we will assume throughout. } 

\begin{definition}[\red{Definition 2.7 of} \cite{cummins16}]
 The switching parameter $Z$ is \emph{regular} if 
 \begin{itemize}
  \item $Z$ is threshold regular,
  \item for all $(j,i)\in E$, $0<L_{ij}<U_{ij}$,
  \item for all $k\in V$, $\gamma_k>0$, and
  \item for all $\kappa\in \chi^{(N)}$ and $(j,i)\in E$, $\gamma_j\theta_{ij} \neq \Lambda_j(\kappa)$ for each threshold $\theta_{ij}$ which defines $\kappa$. 
 \end{itemize}
\end{definition}

\red{The last condition expresses the requirement that the focal point of each regular cell $\kappa$ does not lie in the boundary of $\kappa$, which holds generically.  This is a typical assumption for switching systems because it implies that the} right hand side of the switching system \eqref{eq: switch sys} has a well defined crossing direction on all the boundaries of cells $\kappa \in \chi^{(N)}$ \red{\cite{casey06,farcot09,cummins16,pasquini20}}. We will use this to define a labeling map that collects information about these crossing directions. We then use the labeling map to define the flow direction map which indicates the direction in which the flow of the corresponding system crosses the the threshold.  The flow direction map can be viewed as  a multi-valued map  and  represented as a state transition graph that is a combinatorial summary of the flow information given by the switching system.
The \emph{labeling map}, defined below, generalizes the concept of wall-labeling (Definition 3.1 of \cite{cummins16}) in switching systems from regular cells to loop characteristic cells.  

\begin{definition}\label{defn: flow}
Let $Z$ be a regular switching parameter
\begin{enumerate}
\item
 The \emph{labeling map} $\cL:\LCC\times V \times \{-,+\}\to \{-1,1\}$ 
 describes the sign of the right hand side of the switching system
 on the cells that are neighbors of $\tau\in \LCC$ in a particular direction.  Letting $\rho = \rho^\tau$, 
 we first consider regular directions $j\notin \sd(\tau)$.
 Here we look at the sign of the $j$-th equation of the switching system \eqref{eq: switch sys} on the boundary in the $j$-th direction
 \begin{align*}
    \cL(\tau,j,\beta) := \begin{cases}
        \sgn(-\gamma_j\theta_{a_j^\tau j} + \Lambda_j(\tau)), \quad & j\notin \sd(\tau),\; \beta = - \\
        \sgn(-\gamma_j\theta_{b_j^\tau j} + \Lambda_j(\tau)), \quad & j\notin \sd(\tau),\; \beta = + 
    \end{cases}
 \end{align*}
 For singular direction $j \in \sd(\tau)$, we look at a $j$-neighbor of $\tau$ and ask for the sign of the $\rho(j)$-th equation of the switching system because $\Lambda_{\rho(j)}$ is guaranteed to be well defined on a $j$-neighbor (see Lemma \ref{lem: sd labels}):
 \begin{align*}
    \cL(\tau,j,\beta) := \begin{cases}
        \sgn(-\gamma_{\rho(j)}\theta_{\rho^2(j)\rho(j)} + \Lambda_{\rho(j)}(\tau_j^-)), \quad & j\in\sd(\tau),\: \beta= - \\
        \sgn(-\gamma_{\rho(j)}\theta_{\rho^2(j)\rho(j)} + \Lambda_{\rho(j)}(\tau_j^+)), \quad & j\in\sd(\tau),\: \beta= +.
         \end{cases}
 \end{align*}

\item  The 
 \emph{flow direction map}, $\Phi: \LCC \to \{-1,0,1\}^N$ summarizes the degree of agreement in the labeling map  between the neighbors of $\tau$ in a given direction. It  is defined component-wise by
 \begin{align*}
  \Phi_j(\tau) := \begin{cases}
                     1, \quad &\cL(\tau,j,-) = 1 = \cL(\tau,j,+)\\
                     -1, \quad &\cL(\tau,j,-) = -1 = \cL(\tau,j,+)\\
                    0, \quad &\cL(\tau,j,-) =-\cL(\tau,j,+).
                   \end{cases}
 \end{align*}
   \end{enumerate}
\end{definition}

\red{
\begin{example}\label{ex: labeling map}
 Consider the positive toggle plus system of Example \ref{ex: positive toggle}. Let $\tau' = (0,\theta_{21}) \times (0,\theta_{12})$ be the lower left regular cell of the cell complex as in Figure \ref{fig: neighbors}(b). Then $\Lambda_2(\tau') = L_{22}L_{21}$ and $\theta_{b_2 2} = \theta_{12}$ so that  
\[
    \cL(\tau',2,+) = \sgn(-\gamma_2 \theta_{a_2 2}+\Lambda_2(\tau')) = \sgn(-\gamma_2 \theta_{12} + L_{22}L_{21}) = -1
\]
which is represented by the dotted down arrow originating from the upper boundary of $\tau'$ Figure \ref{fig: neighbors}(b). This indicates the flow of the switching system in the $x_2$ direction is downward when $x\in \tau'$ is close to the upper boundary of $\tau'$. We also have $\theta_{a_2 2} = 0$ so that
\[
    \cL(\tau',2,-) = \sgn(-\gamma_2 \theta_{a_2 2} + \Lambda_2(\tau')) = \sgn(0 + L_{22}L_{21}) = 1
\]
and $\Phi_2(\tau') = 0$ since $\cL(\tau',2,-) = -\cL(\tau',2,+)$. This indicates that $\dot{x}_2 = 0$ for some $x\in\tau'$. The cell $\tau = \{\theta_{21}\} \times \{\theta_{12}\}$ is a loop characteristic cell with $\rho(1) = 2$ and $\rho(2) = 1$. We compute
\[
    \cL(\tau,1,-) = \sgn(-\gamma_{\rho(1)}\theta_{\rho^2(1)\rho(1)} + \Lambda_{\rho(1)}(\tau_1^-)) = \sgn(-\gamma_2 \theta_{12} + L_{22}L_{21}) = -1. 
\]
which is the direction of the flow on the left neighbor of $\tau$. This is also represented by the dotted down arrow in Figure \ref{fig: neighbors}(b).  
\end{example}
}

\red{In general,} $\cL(\tau,j,\pm)$ is represented \red{on a diagram of the cell complex}  by an arrow originating from $\tau_j^\pm$ pointing in direction $\rho(j)$ either positively or negatively according to the sign of $\cL(\tau,j,\pm)$. As suggested by Figure~\ref{fig: neighbors}, the flow direction map gives rise to a \emph{state transition graph} which represents admissible transitions between the states that are represented by $\kappa\in \chi^{(N)}$. The state transition graph is explicitly constructed in \cite{cummins16}.

Note that the flow direction map depends on the choice of parameter $Z$. \red{ A key observation is that  it  only depends on inequalities between parameters.}  We define an equivalence relation on all parameters $Z$ that \red{satisfy the same inequalities and therefore} produce the same flow direction map. These equivalence classes, which we now proceed to define, will be called \emph{combinatorial parameters}.

\begin{definition}[\red{Definitions 4.5 and 4.6 of} \cite{cummins16}]\label{defn: comb par}
Consider a regular switching parameter $Z$. 
\begin{enumerate}
    \item The \emph{input combinations} of the $i$th node is the Cartesian product \[\In_i := \prod_{j\in\Sources(i)}\{\off,\on\}.\]
   
     The \emph{indicator function}, $\one_i :\bR_+^{\Sources(i)}\to \In_i$, is defined component-wise by
    \begin{align*}
        \one_{ij}(x) := \begin{cases}
        \off, \quad & \bs_{ij} = 1 \mbox{ and } x_j<\theta_{ij} \mbox{ or } \bs_{ij}=-1 \mbox{ and } x_j>\theta_{ij} \\
                     \on, \quad & \bs_{ij}=1 \mbox{ and } x_j>\theta_{ij} \mbox{ or } \bs_{ij}=-1 \mbox{ and } x_j<\theta_{ij} \\
                     \mbox{undefined}, \quad &\mbox{otherwise}.
    \end{cases}
    \end{align*}
    
    The \emph{$\sigma$-valuation function}, $v_{i\red{j}}:\In_i\to  \red{\bR}$ 
    is defined by
    \begin{align*}
        v_{ij}(A) := \begin{cases}
            L_{ij}, \quad & \red{A} = \off \\
            U_{ij}, \quad & \red{A} = \on \\
            \mbox{undefined}, \quad & \mbox{otherwise}.
        \end{cases}
    \end{align*}
    Note that $\sigma_{ij} = v_{ij}\circ \one_{ij}$. The \emph{$\Lambda$-valuation function}, $\omega_i:\In_i \to \bR$, is defined by
    \begin{align*}
        \omega_i(A) := \prod_{\ell=1}^{p_i}\sum_{j\in I_\ell} v_{ij}(A\red{_j}). 
    \end{align*}
    Note that $\Lambda_i = \omega_i \circ \one_i$.
    
    Define $\sL_j:\In_j\times \Targets(j) \to \{-1,1\}$ by 
    \begin{align*}
        \sL_j(A,i) := \sgn(-\gamma_j\theta_{ij} + \omega_j(A)).
    \end{align*}
    The \emph{logic parameter} is the collection $\sL(Z) := (\sL_1(\cdot,\cdot),\ldots,\sL_N(\cdot,\cdot)\red{)}$. 
    
    \item We define an equivalence relation $Z \sim Z'$ whenever $(\sL(Z'),O(Z')) = (\sL(Z),O(Z)\red{)}$. The \emph{combinatorial parameter} is an equivalence class of this relationship. In other words, 
    $ Z' \in \cP(Z)$ whenever $(\sL(Z'),O(Z')) = (\sL(Z),O(Z))$.
\end{enumerate}
\end{definition}

  The notion of combinatorial parameter $\cP(Z)$ was introduced in \cite{cummins16}. Each combinatorial parameter  is defined in terms of inequalities between real valued parameters  of $Z$. Therefore each combinatorial parameter corresponds to an open domain in the real-valued parameter space of parameters $Z$.
  The key observation from  \cite{cummins16} is that any two parameters $Z_1, Z_2 \in \cP(Z)$ define identical labeling maps and therefore identical flow direction maps. This is because the logic parameters  $L_j$ that enter the definition of combinatorial parameters represent the same signs of the switching differential equations that define the labeling map.  
  
  \red{
  \begin{example}
    Consider the positive toggle plus system of Example \ref{ex: positive toggle}.  We will reference $\tau'$ and $\tau''$ from Figure \ref{fig: neighbors}(b). The input combinations for the first node is
    \[
        \In_1 = \{\off, \on\} \times \{\off, \on\}
    \]
    because the first node has two inputs. The indicator function depends only on the order parameter $O(Z)$. The second component of the indicator function $\one_1$ satisfies
    \[
        \one_{12}(x) = \begin{cases}
            \off, \quad x\in \tau' \\
            \on, \quad x\in \tau''
        \end{cases}.
    \]
    The $\sigma$-valuation function $v_{12}$ satisfies $v_{12}(\off) = L_{12}$ and $v_{12}(\on) = U_{12}$ so that 
    \[
        \sigma_{12}(x) = v_{12}(\one_{12}(x)) = \begin{cases}
            L_{12}, \quad x \in \tau'\\
            U_{12}, \quad x \in \tau''
        \end{cases}. 
    \]
    The $\Lambda$-valuation function $\omega_i$ satisfies 
    \begin{align*}
        \omega_1((\off,\off)) &= v_{11}(\off)v_{12}(\off) = L_{11}L_{12}, \mbox{ and}\\
        \omega_1((\off,\on)) &= v_{11}(\off)v_{12}(\on) = L_{11}U_{12}
    \end{align*}
    so that 
    \begin{align*}
        \Lambda_1(x) = \omega_1(\one_1(x)) = \begin{cases}
            \omega_1((\off,\off)) = L_{11}L_{12}, \quad x\in \tau' \\
            \omega_1((\off,\on)) = L_{11}U_{12}, \quad x \in \tau''
        \end{cases}.
    \end{align*}
    The first component of the logic parameter $\sL(Z)$ satisfies
    \begin{align*}
        &\sL_1((\off,\off),2) = \sgn(-\gamma_1\theta_{21} + \omega_1((\off,\off))) = \sgn(-\gamma_1\theta_{21} + L_{11}L_{12}) = -1, \mbox{ and}\\
        &\sL_1((\off,\on),2) = \sgn(-\gamma_1\theta_{21} + \omega_1((\off,\on))) = \sgn(-\gamma_1\theta_{21} + L_{11}U_{12}) = 1. 
    \end{align*}
    These values are related to the labeling map via $\cL(\tau',1,+) = \sL_1((\off,\off),2)$ and $\cL(\tau'',1,+) = \sL_1((\off,\on),2)$. 
  \end{example}
  }


\subsection{Characterization of Equilibrium Cells}

We now provide a theorem which characterizes the equilibrium cells of \red{the} switching system \red{$\SWITCH(Z)$} and shows that there is a unique equilibrium \red{of $\cS(Z,\eps)$ which converges to} each equilibrium cell. The proof for the theorem can be found in Section~\ref{sec: eq cell proofs}.

\begin{theorem}\label{thm: FP}
Let $Z$ be a regular switching parameter. 
\begin{description}
 \item[(a)] 
$\tau\in \chi$ is an equilibrium cell if and only if
 \begin{enumerate}
 \item $\tau$ is a loop characteristic cell, and
  \item $\Phi_j(\tau) = 0$ for each $j$. 
 \end{enumerate}
 \item[(b)]  Furthermore, \red{there is an $0<A\in \bR^{N\times N}$ so that for $\eps < A$} any sigmoidal system $\cS(Z,\eps)$ \red{has} a unique equilibrium $x^\eps$such that $x^\eps \to \tau$ as $\eps \to 0$. 
\end{description}
\end{theorem}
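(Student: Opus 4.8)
The plan is to recast equilibria of $\cS(Z,\eps)$ as fixed points of $G(\cdot\,;\eps):=\Gamma^{-1}\Lambda(\cdot\,;\eps)$ and to analyze this fixed-point equation near a cell $\tau$, splitting the coordinates into the regular directions $r\notin\sd(\tau)$ and the singular directions $s\in\sd(\tau)$. The organizing observation is a trichotomy: if $x^\eps\to\tau$ then each factor $\sigma_{ij}(x^\eps_j;\eps)$ either (i) converges to a step value $L_{ij}$ or $U_{ij}$ because $x^\eps_j$ stays bounded away from $\theta_{ij}$ — this occurs when $j$ is regular (then $\pi_j(\tau)$ is a fixed open interval missing $\theta_{ij}$) or when $j$ is singular with $\rho^\tau(j)\neq i$ (then $x^\eps_j\to\theta_{\rho^\tau(j)j}\neq\theta_{ij}$), and in either case $x^\eps_j\notin\cU_1(\eps_{ij})$ for small $\eps$ so $|\sigma_{ij}'|\le C_1\eps_{ij}$ along $x^\eps$ — or (ii) $j$ is singular with $\rho^\tau(j)=i$, the transition regime, where one only knows $\sigma_{ij}(x^\eps_j;\eps)\in[L_{ij},U_{ij}]$. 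A preliminary lemma (recorded in Lemma~\ref{lem: sd labels}) makes the bookkeeping precise: outside the transition regime the limiting value is uniquely determined, and consequently for $n\notin\rho^\tau(\sd(\tau))$ the quantity $\Lambda_n(\tau)$ is well defined and equals $\Lambda_n(\kappa)$ for every $\kappa\in\chi^{(N)}$ with $\tau\subset\ol{\kappa}$, so that the genericity clauses in the definition of a regular parameter apply to it.

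\emph{Necessity.} Assume $\tau$ is an equilibrium cell with $x^\eps\to\tau$, and write $\rho=\rho^\tau$. If some singular direction $n$ had $n\notin\rho(\sd(\tau))$, then $\Lambda_n$ would have no input in the transition regime, so $\Lambda_n(x^\eps;\eps)\to\Lambda_n(\tau)$; since also $x^\eps_n\to\theta_{\rho(n)n}$, this forces $\gamma_n\theta_{\rho(n)n}=\Lambda_n(\tau)=\Lambda_n(\kappa)$ for some $\kappa\in\chi^{(N)}$ having $\theta_{\rho(n)n}$ among its defining thresholds, contradicting regularity of $Z$. Hence $\sd(\tau)\subseteq\rho(\sd(\tau))$, and a cardinality count gives $\rho(\sd(\tau))=\sd(\tau)$ with $\rho$ a permutation there, i.e. $\tau\in\LCC$; this is (1). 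Knowing $\tau\in\LCC$, no regular direction lies in $\rho(\sd(\tau))$, so for regular $r$ every input of $\Lambda_r$ is determined and $x^\eps_r\to\gamma_r^{-1}\Lambda_r(\tau)\in\ol{\pi_r(\tau)}$; regularity rules out the endpoints, so $\gamma_r^{-1}\Lambda_r(\tau)\in(\theta_{a_r r},\theta_{b_r r})$, i.e. $\cL(\tau,r,-)=1=-\cL(\tau,r,+)$ and $\Phi_r(\tau)=0$. For singular $j$, the only transition-regime input of $\Lambda_{\rho(j)}$ is $\sigma_{\rho(j)j}$, in which $\Lambda_{\rho(j)}$ is affine and strictly increasing with extreme values $\Lambda_{\rho(j)}(\tau_j^-)$, $\Lambda_{\rho(j)}(\tau_j^+)$; since $\gamma_{\rho(j)}x^\eps_{\rho(j)}=\Lambda_{\rho(j)}(x^\eps;\eps)$ and $x^\eps_{\rho(j)}\to\theta_{\rho^2(j)\rho(j)}$, the number $\gamma_{\rho(j)}\theta_{\rho^2(j)\rho(j)}$ lies in the closed interval bounded by those two extreme values, strictly by regularity, which is exactly $\cL(\tau,j,-)=-\cL(\tau,j,+)$, i.e. $\Phi_j(\tau)=0$. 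This gives (2).

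\emph{Sufficiency and uniqueness.} Conversely, suppose $\tau\in\LCC$ and $\Phi_j(\tau)=0$ for all $j$, and let $\bar x$ be the point with $\bar x_r=\gamma_r^{-1}\Lambda_r(\tau)$ for regular $r$ and $\bar x_s=\theta_{\rho(s)s}$ for singular $s$; the computations above show $\bar x\in\tau$. I will produce, for all $\eps$ below some $0<A\in\bR^{N\times N}$, a unique fixed point of $G(\cdot\,;\eps)$ in the box $B(\eps):=\prod_{r}J_r\times\prod_{s}\cU_2(\eps_{\rho(s)s})$, where each $J_r\subset\pi_r(\tau)$ is a fixed closed interval around $\bar x_r$ and $\cU_2(\eps_{\rho(s)s})$ is the steep layer of Definition~\ref{defn: sigmoid}(4), which contracts to $\{\theta_{\rho(s)s}\}$ while $\sigma_{\rho(s)s}$ sweeps across $(L_{\rho(s)s},U_{\rho(s)s})$ on it. On $B(\eps)$ the regular coordinates are slaved: $G_r(x;\eps)=\gamma_r^{-1}\Lambda_r(\tau)+O(\eps)$ uniformly, so $G_r$ maps $B(\eps)$ into $J_r$ and is an $O(\eps)$-contraction in the regular block. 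Because $\rho$ permutes $\sd(\tau)$, the singular block decomposes over the cycles of $\rho$; on a cycle $(s_1,\dots,s_m)$ with $\rho(s_k)=s_{k+1}$ the equations couple only through the transition sigmoids, so eliminating the $x_{s_k}$ reduces the cycle to a scalar equation $x_{s_1}=F(x_{s_1})+O(\eps)$, where $F$ composes the affine invertible $\Lambda$-valuations with the monotone steep $\sigma_{s_{k+1}s_k}(\cdot;\eps)$ — precisely the equilibrium equation of the cyclic feedback subsystem carried by that loop. The sign of $F'$ is the loop sign: a negative loop makes $F$ decreasing, hence uniquely solvable by the intermediate value theorem; a positive loop makes $F$ increasing with $|F'|\ge c\,\eps^{-m}$ on $B(\eps)$, so the contraction principle applied to $F^{-1}$ (same fixed points) yields uniqueness. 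In both cases $\Phi_{s_k}(\tau)=0$ for all $k$ on the loop places the fixed point in the interior of the corresponding $\cU_2$-interval. Combining the regular and cyclic blocks — their $O(\eps)$ cross-couplings being dominated by the contraction constants once $\eps<A$ — gives a unique fixed point $x^\eps\in B(\eps)$, and since $J_r$ is a fixed interval around $\bar x_r$ while $\cU_2(\eps_{\rho(s)s})\to\{\bar x_s\}$, we get $x^\eps\to\bar x\in\tau$. Finally the trichotomy forces any equilibrium converging to $\tau$ into $B(\eps)$ for small $\eps$, which is the uniqueness claim in (b).

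The delicate step, and the one I expect to require the most care, is the singular block in the sufficiency direction. The transition layers are exactly where the sigmoids are steep, $|\sigma'|\ge C_2\eps^{-1}$, so the natural scalar fixed-point map on a loop is \emph{expanding}; one must therefore either pass to its inverse (positive loops) or use the strict monotonicity of the loop (negative loops) to isolate a single root, while keeping the $O(\eps)$ interaction among distinct loops and between the singular and regular blocks under control and verifying that $B(\eps)$ is genuinely invariant (which uses that the suprema and infima $U_{ij},L_{ij}$ are not attained, Definition~\ref{defn: sigmoid}(2)). The necessity direction and all of the combinatorial bookkeeping, by contrast, reduce cleanly to the trichotomy above together with the genericity built into a regular parameter.
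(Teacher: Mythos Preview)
Your necessity argument is essentially the paper's: both observe that if some singular direction $n$ is not in $\rho(\sd(\tau))$ then $\Lambda_n(\tau)$ is well defined and the limit of the $n$th equilibrium equation contradicts regularity, forcing $\tau\in\LCC$; the two checks of $\Phi_j(\tau)=0$ (regular and singular $j$) are also the same, with the paper carrying out your ``affine in the transition input'' observation via explicit sample points $x^{\pm\eta,\eps}$.

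For sufficiency and uniqueness you take a genuinely different route.  The paper works on a box $\tau(\eps)$ built from the steep layers $\cU_2$ (singular directions) and fixed subintervals (regular directions), as you do, but then solves each equation $f_{\rho(j)}=0$ for $x_j$ separately by the implicit function theorem, assembles the resulting maps into $g=(X_1,\dots,X_N):\tau(\eps)\to\tau(\eps)$, and applies Brouwer for existence.  Uniqueness is obtained not cycle--by--cycle but by computing $\det J(x;\eps)$ on $\tau(\eps)$: the Jacobian is, up to $O(\eps)$, block--diagonal with CFS blocks, each block contributes a factor $\prod\gamma-\sgn(c_d)M^d$, and $M^d\to\infty$ on the box, so $\det J\neq 0$ and the inverse function theorem forbids a second root.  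Your approach instead anticipates the paper's Section~\ref{sec: decomposition}: you decompose along the cycles of $\rho$ from the outset, reduce each cycle to a scalar fixed--point equation, and read off existence and uniqueness from the monotonicity and steepness of the composite $F$.  This is more constructive and makes the role of the loop sign transparent, at the price of a more delicate verification: to show the fixed point lands in the box you must trace the iterates $h_k\circ\cdots\circ h_1$ around the loop and invoke $\Phi_{s_k}(\tau)=0$ at every step to control on which side of the next threshold each iterate falls; the paper's Brouwer argument sidesteps this bookkeeping entirely.  Both arguments are correct; yours trades a soft topological fixed--point theorem for explicit monotone analysis and previews the cyclic decomposition that the paper postpones to the stability discussion.
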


The theorem in the case of regular cells is implied by Proposition 3.6 of \cite{cummins16}. It was shown in \cite{veflingstad07} that loop characteristic cells are a subset of equilibrium cells in the case that the sigmoidal perturbations $\sigma_{ij}(\cdot,\eps)$ are Hill functions. This theorem extends these results by providing a necessary and sufficient condition for the identification of all equilibrium cells, enlarging the class of functions for which it applies, and giving uniqueness of the equilibrium $x^\eps$.

By Theorem \ref{thm: FP}, an equilibrium cell $\kappa$ has a unique equilibrium $x^\eps$ of $\cS(Z,\eps)$ associated to it. We associate the stability of this equilibrium with the cell through the following definition.

\begin{definition}\label{defn: cell stability}
    An equilibrium cell $\kappa$ is \emph{stable} if the associate equilibrium $x^\eps$ of $\cS(Z,\eps)$ is stable for all $\eps>0$ small enough and \emph{unstable} otherwise. 
\end{definition}

The equilibrium cells of a switching system can be computed  using the DSGRN software \cite{DSGRNgithub}.  We show in Section~\ref{sec: gen sys results} that the analysis of their stability can be reduced to the problem of stability of multiple cyclic feedback systems that are associated  to each singular equilibrium cell. We therefore first discuss the stability of equilibrium cells in cyclic feedback systems.

\section{Equilibrium Cells and their Stability in Cyclic Feedback Networks}\label{sec: cfs results}

This section 
concentrates on a particular type of a network, a cyclic feedback network, and characterizes the equilibrium cells and the stability of the equilibria they contain.  In the following section, we generalize these results to arbitrary networks. 

\begin{definition}\label{defn: cfn}
A \emph{cyclic feedback network} (CFN) is a regulatory network $\bRN=(V,E)$ such that $E = \{(1,2),(2,3),\ldots,(N-1,N),(N,1)\}$. A \emph{cyclic feedback system} (CFS) is a switching or sigmoidal system associated to a CFN.
 \end{definition}
 
Throughout this section we will assume that $\bRN$ is a cyclic feedback network. Since each node $j$ has exactly one target, $j+1$, and one source, $j-1$, the node $j$ is associated to exactly one threshold, $\theta_{(j+1)j}$, and $\Lambda_j = \sigma_{j(j-1)}$.  This implies the combinatorial parameter, and thus the flow direction map, is determined by the ordering of numbers within the sets $\{\gamma_j\theta_{(j+1)j},L_{j(j-1)},U_{j(j-1)}\}$, $j=1, \ldots, N \mod N$. This observation informs the following definition.

\begin{definition}
 Given a regular switching parameter $Z = (L,U,\theta,\Gamma)$  for a cyclic feedback system, a node $j$ is \emph{essential} if $L_{j(j-1)}<\gamma_j\theta_{(j+1)j}<U_{j(j-1)}$ and \emph{inessential} otherwise. 
\end{definition}

Another consequence of having exactly one threshold for each node is that there is only one singular loop characteristic cell, $\tau$, for which all the directions are singular, i.e. $\sd(\tau) = V$.  The permutation $\rho$ for this cell is defined by $\rho^\tau(j) = j+1$. 
Throughout this section, $\tau$ always denotes this cell and $\rho=\rho^\tau$ will denote the associated permutation.  We associate a sign to $\rho$ which describes whether the net effect of the cycle is positive or negative: 
\[ \sgn(\rho) := \prod_{i=1}^N \bs_{(i+1)i}.\]
We say $\bRN$ is a \emph{positive CFN} if $\sgn(\rho) = 1$ and a \emph{negative CFN} if $\sgn(\rho) = -1$.

\subsection{Equilibrium Cells}\label{sec: cfs equilibria}

 This section identifies equilibrium cells of a CFN.  To simplify notation, we observe as in \cite{gedeon94} that by changing variables we may assume without loss of generality that if $\bRN$ is a positive CFN, then every edge is activating, and if $\bRN$ is a negative CFN, then every edge is activating except the edge $(N,1)$. This change of variables is of the form 
\begin{align*}
    x_j\to \alpha_j(x_j-\theta_{(j+1)j})+\theta_{(j+1)j}, 
\end{align*}
where $\alpha_j = \pm 1$. 

The following lemma specifies the equilibrium cell for a CFS  at a parameter $Z$ for which it has an inessential node. 
\begin{lemma}\label{lem: inessential cfs}
 If at a switching parameter $Z=(L,U,\theta, \Gamma)$ the CFS has at least one inessential node, then $\SWITCH(Z)$ has a unique equilibrium cell $\kappa$ and this cell  is regular. 
 The cell $\kappa$ is defined as follows.
 If $j$ is an inessential node, the $j$-th projection is 
 \begin{align*}
     \pi_j(\kappa) = \begin{cases}
       (0,\theta_{(j+1)j}), \quad & \mbox{ if} \; U_{j(j-1)}<\gamma_j\theta_{(j+1)j}\\
       (\theta_{(j+1)j},\infty), \quad & \mbox{ if} \; \gamma_j\theta_{(j+1)j}<L_{j(j-1)}. 
     \end{cases}
 \end{align*}
 If $j$ is essential, let $k$ be the inessential node which forms the shortest path of the form $k\to k+1 \to\cdots \to j$, where nodes $k+1, \ldots, j-1$ are essential. We have two cases:
 
 If $\sgn(\rho) = 1$, then 
 \begin{align*}
     \pi_j(\kappa) = \begin{cases}
      (0,\theta_{(j+1)j}), \quad & \mbox{ if} \; U_{k(k-1)}<\gamma_k\theta_{(k+1)k},\\
       (\theta_{(j+1)j},\infty), \quad & \mbox{ if} \; \gamma_k\theta_{(k+1)k}<L_{k(k-1)}. 
     \end{cases}
 \end{align*}
 
 If $\sgn(\rho) = -1$, then  
 \begin{align*}
     \pi_j(\kappa) = \begin{cases}
      (0,\theta_{(j+1)j}), \quad & \mbox{ if} \; U_{k(k-1)}<\gamma_k\theta_{(k+1)k} \mbox{ and } 1\leq k< j, \mbox{ or } \gamma_k\theta_{(k+1)k}<L_{k(k-1)} \mbox{ and } j<k\leq N, \\
       (\theta_{(j+1)j},\infty), \quad & \mbox{ if} \; \gamma_k\theta_{(k+1)k}<L_{k(k-1)} \mbox{ and } 1\leq k<j, \mbox{ or } U_{k(k-1)}<\gamma_k\theta_{(k+1)k} \mbox{ and } j<k\leq N. 
     \end{cases}
 \end{align*}
\end{lemma}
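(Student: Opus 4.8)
The plan is to analyze the switching system directly, using the fact that for a CFS the right-hand side decouples in a cyclic fashion: $\dot x_j = -\gamma_j x_j + \sigma_{j(j-1)}(x_{j-1})$. The presence of an inessential node $k$ (say, with $U_{k(k-1)} < \gamma_k\theta_{(k+1)k}$; the other case is symmetric) means that regardless of the state of $x_{k-1}$, the focal value $\gamma_k^{-1}\sigma_{k(k-1)}(x_{k-1})$ lies below the threshold $\theta_{(k+1)k}$. Hence any trajectory has $x_k$ eventually trapped in $(0,\theta_{(k+1)k})$, and in particular $\sigma_{(k+1)k}(x_k)$ is eventually pinned to a single constant value. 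First I would make this precise: show that the cell complex admits a unique "terminal" regular cell $\kappa$ with respect to the flow direction map — i.e. a cell all of whose boundary arrows point inward — and that this is equivalent to $\Phi_j(\kappa) = 0$ for all $j$ (vacuously, since $\kappa$ is regular so all directions are regular, and inward-pointing on both sides gives $\cL(\kappa,j,-) = 1 = -\cL(\kappa,j,+)$... more precisely the two boundary labels have opposite signs as vectors but the cell is "absorbing"). Actually the cleaner route: for a regular cell $\kappa$, being an equilibrium cell is equivalent to containing the focal point $\Gamma^{-1}\Lambda(\kappa)$ in its own interior, by Proposition 3.6 of \cite{cummins16}; so I must show there is exactly one such $\kappa$ and compute its projections.

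The key steps, in order: (1) Fix an inessential node $k_0$. Using the trapping argument, show $\pi_{k_0}(\kappa)$ must be the interval on the side of $\theta_{(k_0+1)k_0}$ determined by which of $U_{k_0(k_0-1)} < \gamma_{k_0}\theta_{(k_0+1)k_0}$ or $\gamma_{k_0}\theta_{(k_0+1)k_0} < L_{k_0(k_0-1)}$ holds — this is forced because $\sigma_{k_0(k_0-1)}$ takes values in $\{L_{k_0(k_0-1)}, U_{k_0(k_0-1)}\}$ and both give focal values on the same side of the threshold. (2) This determines $\sigma_{(k_0+1)k_0}(\kappa)$ as a fixed constant ($U$ or $L$ according to the edge sign and which interval), hence determines $\Lambda_{k_0+1}(\kappa) = \sigma_{(k_0+1)k_0}(\kappa)$, hence the focal value of $x_{k_0+1}$, hence (since there is a unique threshold $\theta_{(k_0+2)(k_0+1)}$ for node $k_0+1$) determines $\pi_{k_0+1}(\kappa)$. (3) Propagate this determination forward along the cycle $k_0 \to k_0+1 \to \cdots$, stopping when we return to an inessential node: at an inessential node the determination from the previous step is consistent with (indeed overridden by) the intrinsic constraint from step (1), so no contradiction arises and the propagation is well-defined on the whole cycle. (4) Conclude $\kappa$ is uniquely determined, and read off the formulas: for essential $j$, trace back to the nearest upstream inessential node $k$; the sign bookkeeping $\prod$ of edge signs along $k \to \cdots \to j$ collapses (after the change of variables) to: all edges activating if $\sgn(\rho)=1$, giving the stated one-line formula; and if $\sgn(\rho) = -1$ the single repressing edge $(N,1)$ flips the interval exactly when the path $k \to \cdots \to j$ crosses it, which is precisely the condition $j < k$ in the wrap-around indexing — this produces the four-way case split in the statement. (5) Finally, invoke Theorem \ref{thm: FP}(b) to upgrade the unique equilibrium cell of $\SWITCH(Z)$ to the unique nearby equilibrium of $\cS(Z,\eps)$; regularity of $\kappa$ follows since the propagation never leaves regular cells.

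The main obstacle I anticipate is the sign/index bookkeeping in step (4), especially disentangling the $\sgn(\rho) = -1$ case: one must carefully track, for each essential node $j$ and its associated upstream inessential node $k$, whether the directed path $k \to k+1 \to \cdots \to j$ (read modulo $N$) passes through the edge $(N,1)$, and verify this happens exactly when $j < k$ in the chosen labeling. This is where an off-by-one or an orientation error is most likely, and it is worth drawing the cycle explicitly and checking against the positive toggle plus or a 3-node example. The trapping argument in steps (1)–(3) is routine once phrased in terms of focal points (every trajectory starting in a regular cell moves monotonically toward the focal point until it hits the boundary, per \cite{casey06}), and the uniqueness is immediate from the fact that the forward propagation has no free choices. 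A minor technical point to handle cleanly: if there are several inessential nodes, one should check that propagating from any one of them yields the same $\kappa$ — this holds because the determination at a downstream inessential node via propagation must agree with its intrinsic determination, which is exactly the consistency we verified in step (3).
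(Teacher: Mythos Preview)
Your approach is essentially the same as the paper's: both fix an inessential node, show its projection is forced, then propagate forward along the cycle to determine the remaining projections, with the sign bookkeeping in the negative case handled exactly as you outline. The paper phrases everything directly in terms of the flow direction map $\Phi_j(\kappa') = 0$ rather than focal points and trapping, but the computations are identical.

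There is one genuine gap. Your argument establishes that there is a unique \emph{regular} equilibrium cell, but the lemma asserts uniqueness among \emph{all} equilibrium cells. By Theorem~\ref{thm: FP}(a), equilibrium cells are loop characteristic cells, and for a CFS the only singular loop characteristic cell is $\tau = \prod_j \{\theta_{(j+1)j}\}$. You must rule this out explicitly. Your sentence ``regularity of $\kappa$ follows since the propagation never leaves regular cells'' does not do this: it only says the cell you constructed is regular, not that $\tau$ fails to be an equilibrium cell. The paper handles this by taking $j$ with $j+1$ inessential and computing $\cL(\tau,j,+) = \cL(\tau,j,-)$ directly (both signs agree because both $L_{(j+1)j}$ and $U_{(j+1)j}$ lie on the same side of $\gamma_{j+1}\theta_{(j+2)(j+1)}$), so $\Phi_j(\tau) \neq 0$. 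This is a one-line addition and fits naturally with your trapping intuition, but it is not implied by the propagation argument and needs to be stated.

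A minor point: your step (5) invoking Theorem~\ref{thm: FP}(b) is unnecessary here. The lemma is purely about equilibrium cells of $\SWITCH(Z)$; the correspondence with sigmoidal equilibria is already packaged into the definition of equilibrium cell and Theorem~\ref{thm: FP}(a).
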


\begin{figure}[htbp!]

\begin{tabular}{cc}

\begin{tikzpicture}[scale = 2.5]
  \tikzstyle{line} = [-,thick]
  \tikzstyle{arrow} = [->,line width = .4mm]
  \tikzstyle{unstable} = [red]
  \tikzstyle{stable} = [blue]
  \tikzstyle{pt} = [circle,draw=black,fill = black,minimum size = 2pt];
  \def\arrlen{.3}
  
  \node[pt] at (0,0) (00) [label =below: 0,label = west: 0] {};
  \node[pt] at (1,0) (10) [label = below:$\theta_{21}$ ] {};
  \node[pt] at (0,1) (01) [label = west:$\theta_{12}$] {};
  \node[pt] at (2,0) (20) [label = below:$\infty$] {};
  \node[pt] at (1,1) (11) [label = 45:{\Large $\tau$}] {};
  \node[pt] at (2,1) (21) {};
  \node[pt] at (0,2) (02) [label = west:$\infty$] {};
  \node[pt] at (1,2) (12) {};
  \node[pt] at (2,2) (22) {};

  \draw[line] (00) to (20);
  \draw[line] (01) to (21);
  \draw[line] (02) to (22);
  
  \draw[line] (00) to (02);
  \draw[line] (10) to (12);
  \draw[line] (20) to (22);
  
  \draw[arrow] (1,.5) to (1-\arrlen,.5);
  \draw[arrow] (.5,1) to (.5,1-\arrlen);
  \draw[arrow] (1.5,1) to (1.5,1+\arrlen);
  \draw[arrow] (1,1.5) to (1+\arrlen,1.5);
  
  \node at (.5,.5) {\Large $\kappa^L$};
  \node at (1.5,1.5) {\Large $\kappa^H$};
  
 \end{tikzpicture}
 &

     \begin{tikzpicture}[scale = 2.5]
  \tikzstyle{line} = [-,thick]
  \tikzstyle{arrow} = [->,line width = .4mm]
  \tikzstyle{unstable} = [red]
  \tikzstyle{stable} = [blue]
  \tikzstyle{pt} = [circle,draw=black,fill = black,minimum size = 2pt];
  \def\arrlen{.3}
  
  \node[pt] at (0,0) (00) [label =below: 0,label = west: 0] {};
  \node[pt] at (1,0) (10) [label = below:$\theta_{21}$ ] {};
  \node[pt] at (0,1) (01) [label = west:$\theta_{12}$] {};
  \node[pt] at (2,0) (20) [label = below:$\infty$] {};
  \node[pt] at (1,1) (11) [label = 45:{\Large $\tau$}] {};
  \node[pt] at (2,1) (21) {};
  \node[pt] at (0,2) (02) [label = west:$\infty$] {};
  \node[pt] at (1,2) (12) {};
  \node[pt] at (2,2) (22) {};

  \draw[line] (00) to (20);
  \draw[line] (01) to (21);
  \draw[line] (02) to (22);
  
  \draw[line] (00) to (02);
  \draw[line] (10) to (12);
  \draw[line] (20) to (22);
  
  \draw[arrow] (1,.5) to (1+\arrlen,.5);
  \draw[arrow] (.5,1) to (.5,1-\arrlen);
  \draw[arrow] (1.5,1) to (1.5,1+\arrlen);
  \draw[arrow] (1,1.5) to (1-\arrlen,1.5);
  
  \node at (.5,.5) {\Large $\kappa$};
  
 \end{tikzpicture}
 \\
 (a): Positive CFS & (b): Negative CFS
\end{tabular}

\caption{\textbf{Labeling map 
for the two node CFSs with no inessential nodes.} The labeling map on the boundary of $\bR_+^2$ points inward (not shown). \textbf{(a):} $\bRN = (V,E) = (\{1,2\},\; \{(1\to 2), (2\to 1)\}$.  The arrows indicate $\Phi_i(\kappa^L) = \Phi_i(\kappa^H) = \Phi_i(\tau) = 0$, $i=1,2$ so that $\kappa^L$, $\kappa^H$, and $\tau$ are equilibrium cells. \textbf{(b):} $\bRN = (V,E) = (\{1,2\}, \{(1\to 2), (2\dashv 1)\}$. The arrows indicate that $\Phi_2(\kappa) = 0$ but $\Phi_1(\kappa) = 1$ so that $\kappa$ is not an equilibrium cell. However, $\Phi_1(\tau)=\Phi_2(\tau) = 0$ so $\tau$ is an equilibrium cell. There are no regular equilibrium cells.  }\label{fig: essential CFS}
\end{figure}

When $N=2$, and  for a parameter $Z$ where all nodes are essential, the value of the labeling map on the neighbors of $\tau$  for a positive CFS  is depicted in Figure \ref{fig: essential CFS}(a), and for a negative CFS in Figure \ref{fig: essential CFS} (b). It is apparent that the positive CFS has two regular equilibrium cells and the negative CFS has no regular equilibrium cells. In either case $\tau$ is a singular equilibrium cell. The next lemma shows that this is true for all $N$. 

\begin{lemma}\label{lem: essential cfs}
     If at a switching parameter $Z=(L,U,\theta, \Gamma)$ the  CFS  has no inessential nodes, then $\tau$ is an equilibrium cell of $\SWITCH(Z)$. Furthermore,
     \begin{enumerate}
         \item 
 If $\bRN$ is a positive CFN then $\SWITCH(Z)$ has exactly two regular equilibrium cells defined by 
\begin{align*}
   \kappa^L = \prod_{j=1}^N (0,\theta_{(j+1)j}) \qquad  \mbox{ and } \qquad \kappa^H = \prod_{j=1}^N  (\theta_{(j+1)j},\infty). 
\end{align*}
\item
 If $\bRN$ is a negative CFN then $\tau$ is the unique equilibrium cell.
    \end{enumerate}
\end{lemma}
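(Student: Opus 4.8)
The plan is to exploit the change of variables that makes every edge activating (positive CFN) or every edge activating except $(N,1)$ (negative CFN), so the analysis reduces to checking signs of $\cL$ on the neighbors of the relevant cells, then invoking Theorem \ref{thm: FP}(a). First I would establish that $\tau$, the fully singular cell with $\rho(j) = j+1$, is a loop characteristic cell by construction, so by Theorem \ref{thm: FP}(a) it suffices to show $\Phi_j(\tau) = 0$ for all $j$. For a singular direction $j$, $\cL(\tau,j,\pm) = \sgn(-\gamma_{j+1}\theta_{(j+2)(j+1)} + \Lambda_{j+1}(\tau_j^\pm))$, and since $\Lambda_{j+1} = \sigma_{(j+1)j}$ depends only on $x_j$, moving from $\tau_j^-$ to $\tau_j^+$ flips $x_j$ from below $\theta_{(j+1)j}$ to above it, hence flips $\sigma_{(j+1)j}$ between $L_{(j+1)j}$ and $U_{(j+1)j}$ (in the order determined by $\bs_{(j+1)j}$). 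Because node $j+1$ is essential, $L_{(j+1)j} < \gamma_{j+1}\theta_{(j+2)(j+1)} < U_{(j+1)j}$, so the two values of $-\gamma_{j+1}\theta_{(j+2)(j+1)} + \Lambda_{j+1}$ have opposite signs; thus $\cL(\tau,j,-) = -\cL(\tau,j,+)$ and $\Phi_j(\tau) = 0$. This gives that $\tau$ is always an equilibrium cell.

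Next, for part (1) I would identify the candidate regular equilibrium cells. In a regular cell $\kappa$, each $\sigma_{(j+1)j}$ is constant, so $\kappa$ is determined by a choice of $\on/\off$ for each node, and being an equilibrium cell (by Theorem \ref{thm: FP}(a), since every regular cell is in $\LCC$) requires $\Phi_j(\kappa) = 0$ for all $j$, i.e. $\gamma_j\theta_{(j+1)j}$ lies strictly between $\Lambda_j(\kappa)$ evaluated on the lower and upper $j$-neighbors. With all edges activating and node $j$ essential, one checks that $\Phi_j(\kappa) = 0$ forces $x_{j-1} < \theta_{j(j-1)}$ to propagate $x_j < \theta_{(j+1)j}$, and $x_{j-1} > \theta_{j(j-1)}$ to propagate $x_j > \theta_{(j+1)j}$; chasing this consistency condition around the cycle (here $\sgn(\rho) = 1$ means no sign flip) yields exactly the two fixed patterns — all nodes $\off$, giving $\kappa^L = \prod_j (0,\theta_{(j+1)j})$, and all nodes $\on$, giving $\kappa^H = \prod_j (\theta_{(j+1)j},\infty)$ — and excludes every mixed pattern. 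For part (2), the same propagation argument applied to a negative CFN picks up one sign reversal around the loop (the edge $(N,1)$ is repressing), so the consistency condition becomes self-contradictory: any choice of $\on/\off$ at node $1$ forces the opposite choice after going around, so no regular cell satisfies $\Phi_j(\kappa) = 0$ for all $j$; combined with the first paragraph and the fact that the only other $\LCC$ cells are $N$-dimensional (since a proper nonempty subset of directions cannot support a $\rho$-permutation for a single cycle — a sub-argument I would spell out using that each node has a unique source and target), $\tau$ is the unique equilibrium cell.

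The main obstacle I expect is the bookkeeping in the "propagation around the cycle" step: carefully translating the condition $\Phi_j(\kappa) = 0$ into the statement "the state of node $j$ in $\kappa$ is determined by the state of node $j-1$" using essentiality and the activating-edge normalization, and then verifying that iterating this map around the cycle has exactly two fixed points when $\sgn(\rho) = 1$ and none when $\sgn(\rho) = -1$. A secondary obstacle is the claim that for a CFN the only loop characteristic cells are the $N$-dimensional cells and $\tau$ itself; this needs the observation that if $\emptyset \neq \sd(\sigma) \subsetneq V$ then $\rho^\sigma$ restricted to $\sd(\sigma)$ maps into $\Targets$ of those nodes, which for a single cycle cannot be a permutation of a proper nonempty subset. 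Everything else is a routine sign computation once the normalization is in place.
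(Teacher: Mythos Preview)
Your proposal is correct and follows essentially the same route as the paper's proof: both verify $\Phi_j(\tau)=0$ via the essentiality inequality, then for regular cells translate $\Phi_j(\kappa)=0$ into a compatibility condition between the states of consecutive nodes and use it to pin down the admissible patterns. The paper phrases the regular-cell step contrapositively (exhibiting a specific index $j$ with a high/low mismatch and computing $\Phi_{j+1}(\kappa)\neq 0$), while you phrase it as a forward propagation around the cycle; these are two sides of the same computation, and your secondary observation that a single cycle has no singular loop characteristic cells other than $\tau$ is exactly what the paper states without proof in the setup of Section~\ref{sec: cfs results}.
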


The proofs for these lemmas can be found in Section \ref{sec: cfn eq cell proofs}.


\subsection{Stability of Equilibria in Sigmoidal CFSs}\label{sec: cfs stability}

To determine stability of equilibria of $\cS(Z,\eps)$, we compute the characteristic polynomial of the Jacobian $J(\eps)$.  The structure of a cyclic feedback system imposes structure on $J$. In particular, we have
\begin{align}\label{eq: CFS Jacobian}
    J = \left(\begin{array}{cccc}
     -\gamma_{1} & & & \sigma_{1N}' \\
     \sigma_{21}' & -\gamma_2 \\
     & \ddots & \ddots \\
     & & \sigma_{N(N-1)}' & -\gamma_N
    \end{array}\right).
\end{align}
Before computing the characteristic polynomial we first note that we can obtain stability of any regular equilibrium cell $\kappa$ from $J$. 

\begin{proposition}\label{prop: reg cell stable}
 If $\kappa$ is a regular equilibrium cell then it is stable. 
\end{proposition}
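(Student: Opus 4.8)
The plan is to read stability directly off the cyclic Jacobian \eqref{eq: CFS Jacobian} evaluated at the equilibrium $x^\eps$, exploiting the fact that on a \emph{regular} cell every off-diagonal entry of $J(\eps)$ is $O(\eps)$, so that $J(\eps)$ limits to the stable diagonal matrix $-\Gamma$.

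First I would show that the equilibrium $x^\eps$ supplied by Theorem~\ref{thm: FP}(b) stays uniformly bounded away from every threshold $\theta_{ij}$ once $\eps$ is small. Since $\kappa$ is a regular equilibrium cell, $\Phi_j(\kappa)=0$ for each $j$ forces the focal point coordinate $\Lambda_j(\kappa)/\gamma_j$ to lie strictly between the consecutive thresholds $\theta_{a_j j}<\theta_{b_j j}$ that bound $\pi_j(\kappa)$. Combining the fixed-point identity $x^\eps=\Gamma^{-1}\Lambda(x^\eps;\eps)$ with the pointwise convergence $\sigma_{ij}(\cdot;\eps)\to\sigma_{ij}$ away from thresholds (equivalently, using the analysis already carried out in the proof of Theorem~\ref{thm: FP}, or Proposition~3.6 of \cite{cummins16} in the regular case), one gets $x^\eps\to\Gamma^{-1}\Lambda(\kappa)$, a point in the \emph{interior} of $\kappa$. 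Hence there are $\delta>0$ and $\eps_0>0$ such that $|x_j^\eps-\theta_{ij}|\geq\delta$ for all $(j,i)\in E$ whenever $\eps<\eps_0$.

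Next I would invoke condition (3) of Definition~\ref{defn: sigmoid}: because $\cU_1(\eps_{ij})\to\{\theta_{ij}\}$ as $\eps_{ij}\to 0$, for $\eps$ small the point $x_j^\eps$ lies outside $\cU_1(\eps_{ij})$, so $|\sigma_{ij}'(x_j^\eps;\eps_{ij})|\leq C_1\eps_{ij}$. In a cyclic feedback system $\Lambda_j=\sigma_{j(j-1)}$, so the only nonzero off-diagonal entries of $J(\eps)$ are the subdiagonal terms $\sigma_{j(j-1)}'$ and the corner term $\sigma_{1N}'$, and all of these are $O(\eps)$. Therefore $J(\eps)\to\mathrm{diag}(-\gamma_1,\dots,-\gamma_N)$ as $\eps\to 0$. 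Since the eigenvalues of a matrix depend continuously on its entries and the limiting matrix has spectrum $\{-\gamma_1,\dots,-\gamma_N\}\subset(-\infty,0)$, for $\eps$ small enough every eigenvalue of $J(\eps)$ has strictly negative real part; thus $x^\eps$ is asymptotically stable, hence stable, for all small $\eps>0$, and by Definition~\ref{defn: cell stability} the cell $\kappa$ is stable. I expect the only real work to be the first step — confirming that $x^\eps$ converges to the interior focal point rather than drifting toward a threshold as $\eps\to 0$ — where one must apply the bootstrap ``all coordinates stay interior $\Rightarrow$ $\Lambda(x^\eps;\eps)\to\Lambda(\kappa)$ $\Rightarrow$ all coordinates converge to the interior focal point'' consistently across the $N$ directions at once; the remaining continuity-of-spectrum argument is routine.
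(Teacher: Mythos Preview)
Your proposal is correct and follows essentially the same approach as the paper: both argue that since $x^\eps$ converges into the regular cell $\kappa$, the off-diagonal entries $\sigma_{j(j-1)}'(x^\eps;\eps)$ tend to zero, making $J(\eps)$ a small perturbation of $-\Gamma$. The only cosmetic difference is the final step---the paper concludes via strict diagonal dominance (Gershgorin), whereas you invoke continuity of the spectrum; your treatment of why $x^\eps$ stays uniformly away from thresholds is in fact more explicit than the paper's one-line assertion.
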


\begin{proof}
If $x^\eps$ is an equilibrium of $\cS(Z,\eps)$ which converges to $\kappa$, then $\sigma_{j(j-1)}'(x^\eps;\eps)$ converges to $0$. Therefore, for $\eps$ small enough $J$ is strictly diagonally dominant with negative diagonal entries and thus all eigenvalues have negative real part. 
\end{proof}

We now give the characteristic polynomial of $J$, which can be computed using the Leibniz Formula for the determinant. For the proof of the following lemma and remaining results of this subsection, see Section \ref{sec: cfn stability proofs}.

\begin{lemma}\label{lem: cfs det(J)}
Let $\cS(Z,\eps)$ be a cyclic feedback system. The characteristic polynomial of the Jacobian $J(x;\eps)$ is given by
    \begin{align*}
        \det(J(x;\eps)-\lambda I) &= (-1)^N\left(\prod_{i=1}^N (\gamma_i+\lambda) -\sgn(\rho)M(x,\eps) \right)
    \end{align*}
    where $M(x,\eps) : = \prod_{i=1}^N |\sigma_{i(i-1)}'(x;\eps)|$. 
 \end{lemma}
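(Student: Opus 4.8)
The plan is to compute $\det(J(x;\eps) - \lambda I)$ directly via the Leibniz formula for the determinant, exploiting the fact that $J - \lambda I$ has nonzero entries only on the diagonal and in the ``cyclic'' positions $(j+1, j)$ (indices mod $N$), as displayed in \eqref{eq: CFS Jacobian}. First I would recall that for a permutation $\pi \in S_N$, the term $\sgn(\pi)\prod_{i=1}^N (J - \lambda I)_{i \pi(i)}$ is nonzero only if for every $i$ the entry $(J-\lambda I)_{i\pi(i)}$ is a nonzero entry of the matrix. Since the only nonzero entries lie at positions $(i,i)$ and $(i, i-1)$ (equivalently $(j+1,j)$), the only permutations contributing are those $\pi$ with $\pi(i) \in \{i, i-1\}$ for all $i$. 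I would argue that such a $\pi$ must be either the identity or the full $N$-cycle $c : i \mapsto i-1$: if $\pi$ fixes some index but is not the identity, following the ``shift by $-1$'' moves forces a contradiction with $\pi$ being a bijection, so the only options are the identity (giving $\prod_i (-\gamma_i - \lambda)$) and the cycle $c$ (giving a product of all the off-diagonal entries $\sigma'_{i(i-1)}$).

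Next I would evaluate the two surviving terms. The identity contributes $\prod_{i=1}^N(-\gamma_i - \lambda) = (-1)^N \prod_{i=1}^N(\gamma_i + \lambda)$. For the $N$-cycle $c$, we have $\sgn(c) = (-1)^{N-1}$, and the product of matrix entries is $\prod_{i=1}^N (J)_{i(i-1)} = \prod_{i=1}^N \sigma'_{i(i-1)}(x;\eps)$. So this term is $(-1)^{N-1}\prod_{i=1}^N \sigma'_{i(i-1)}(x;\eps)$. Combining, $\det(J - \lambda I) = (-1)^N\prod_i(\gamma_i+\lambda) + (-1)^{N-1}\prod_i \sigma'_{i(i-1)} = (-1)^N\big(\prod_i(\gamma_i+\lambda) - \prod_i\sigma'_{i(i-1)}\big)$.

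Finally I would reconcile $\prod_{i=1}^N \sigma'_{i(i-1)}(x;\eps)$ with the claimed expression $\sgn(\rho) M(x,\eps)$ where $M(x,\eps) = \prod_{i=1}^N |\sigma'_{i(i-1)}(x;\eps)|$. Each $\sigma_{i(i-1)}$ is monotone, increasing exactly when $\bs_{i(i-1)} = 1$ and decreasing exactly when $\bs_{i(i-1)} = -1$ (by Definition \ref{defn: sigmoid}(1)–(2) combined with \eqref{eq: sigma}); hence $\sgn(\sigma'_{i(i-1)}(x;\eps)) = \bs_{i(i-1)}$ wherever the derivative is nonzero, and $\sigma'_{i(i-1)}(x;\eps) = \bs_{i(i-1)} |\sigma'_{i(i-1)}(x;\eps)|$ in general. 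Taking the product over $i$ gives $\prod_i \sigma'_{i(i-1)} = \big(\prod_i \bs_{i(i-1)}\big)\big(\prod_i |\sigma'_{i(i-1)}|\big) = \sgn(\rho)\, M(x,\eps)$, using the definition $\sgn(\rho) = \prod_{i=1}^N \bs_{(i+1)i}$ (reindexed, this is the same product). Substituting yields the stated formula.

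The only genuinely nontrivial step is the combinatorial claim that the identity and the single $N$-cycle are the only permutations $\pi$ with $\pi(i) \in \{i, i-1\}$ for all $i$; everything else is bookkeeping with signs. That claim is a short argument: the set $S = \{i : \pi(i) = i-1\}$ must, for $\pi$ to be a bijection onto $\{1,\dots,N\}$, be such that $\{i-1 : i \in S\}$ together with $\{i : i \notin S\}$ partitions $\{1,\dots,N\}$; a quick check shows this forces $S = \emptyset$ or $S = \{1,\dots,N\}$, and I would present it in a sentence or two rather than belaboring it.
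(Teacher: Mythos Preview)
Your proposal is correct and follows essentially the same approach as the paper: both compute the characteristic polynomial via the Leibniz formula, observe that the only contributing permutations are the identity and the full $N$-cycle, and then convert $\prod_i \sigma'_{i(i-1)}$ into $\sgn(\rho)M(x,\eps)$ using monotonicity. Your write-up is actually a bit more careful than the paper's, which simply asserts (without the combinatorial argument you sketch) that only these two permutations contribute.
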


The sign of the CFS plays a significant role in stability of equilibria. First we address positive cyclic feedback systems. 

\begin{proposition}\label{prop: pos cycle stability}
 Let $\bRN$ be a positive CFN and $x$ be an equilibrium of $\cS(Z,\eps)$. Stability of $x$ can be determined as follows.
 \begin{enumerate}
\item  If $M(x,\eps) < \prod_j \gamma_j$ then $x$ is asymptotically stable. 
\item If $M(x,\eps) > \prod_j \gamma_j$ then $x$ is unstable. 
\item If $M(x,\eps) = \prod_j \gamma_j$ then $\cS(Z,\eps)$ has a steady state bifurcation at $x$. 
 \end{enumerate}
\end{proposition}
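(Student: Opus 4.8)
The plan is to read the spectrum of $J(x;\eps)$ directly off Lemma~\ref{lem: cfs det(J)} and to localize the eigenvalues relative to the imaginary axis by a single modulus estimate. Since $\bRN$ is a positive CFN we have $\sgn(\rho)=1$, so by Lemma~\ref{lem: cfs det(J)} the eigenvalues of $J(x;\eps)$ are exactly the solutions $\lambda\in\C$ of
\[
  g(\lambda) := \prod_{i=1}^N(\gamma_i+\lambda) = M(x,\eps),
\]
where $M := M(x,\eps)=\prod_i|\sigma_{i(i-1)}'(x;\eps)|\ge 0$ and $\prod_j\gamma_j>0$ since $Z$ is regular.

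The key estimate I would establish first is that, writing $\lambda=a+bi$ with $a=\mathrm{Re}(\lambda)\ge 0$,
\[
  |\gamma_i+\lambda| = \sqrt{(\gamma_i+a)^2+b^2}\;\ge\;\gamma_i+a\;\ge\;\gamma_i ,
\]
so that $|g(\lambda)| = \prod_i|\gamma_i+\lambda|\ge \prod_i\gamma_i$, with equality only when $b=0$ and $a=0$, i.e. only at $\lambda=0$. From this the three cases are short. For (1), if $M<\prod_j\gamma_j$ then every $\lambda$ with $\mathrm{Re}(\lambda)\ge 0$ satisfies $|g(\lambda)|\ge\prod_j\gamma_j>M$, hence $g(\lambda)\ne M$; so every eigenvalue of $J$ lies in the open left half-plane and $x$ is asymptotically stable. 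For (2), if $M>\prod_j\gamma_j$, restrict $g$ to $[0,\infty)$: since $g(0)=\prod_j\gamma_j<M$, $g$ is continuous, and $g(\lambda)\to\infty$ as $\lambda\to\infty$, the intermediate value theorem gives $\lambda^*>0$ with $g(\lambda^*)=M$, a positive real eigenvalue, so $x$ is unstable. For (3), if $M=\prod_j\gamma_j$ then $g(0)=M$, so $0$ is an eigenvalue; by the key estimate it is the only eigenvalue with nonnegative real part, and it is simple because $g'(0)=\sum_i\prod_{k\ne i}\gamma_k=\big(\prod_i\gamma_i\big)\sum_i\gamma_i^{-1}>0$ shows $0$ is a simple root of the characteristic polynomial. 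Thus $J(x;\eps)$ has a simple zero eigenvalue with all others in the open left half-plane; moreover the unique real root $\lambda^*(M)$ of $g(\lambda^*)=M$ near $\lambda^*=0$ satisfies $d\lambda^*/dM=1/g'(\lambda^*)>0$, so this eigenvalue crosses the imaginary axis transversally as $M$ passes through $\prod_j\gamma_j$, which is precisely the steady-state bifurcation condition at $x$.

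I expect the only delicate point to be the wording of part (3): converting ``$J$ has a simple zero eigenvalue'' into the assertion of a genuine steady-state bifurcation needs the transversality computation $d\lambda^*/dM>0$ above, together with the (harmless) observation that $M(x,\eps)$ indeed moves through the value $\prod_j\gamma_j$ within the family $\cS(Z,\eps)$. The spectral facts themselves are immediate: everything reduces to Lemma~\ref{lem: cfs det(J)} and the elementary inequality $|\gamma_i+\lambda|\ge\gamma_i$ valid whenever $\mathrm{Re}(\lambda)\ge 0$ and $\gamma_i>0$.
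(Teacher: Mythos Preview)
Your argument is correct and takes a genuinely different route from the paper's. The paper normalizes the characteristic polynomial to $p(\lambda)=\prod_i(\gamma_i+\lambda)-M$ and argues via Descartes' rule of signs: all coefficients of $p$ are positive except possibly the constant term $\prod_j\gamma_j-M$, so when $M<\prod_j\gamma_j$ there are no positive real roots, and then the paper invokes the Perron--Frobenius-type fact (Theorem~4 of \cite{benvenuti04}) that a Metzler matrix has a real dominant eigenvalue to conclude all eigenvalues lie in the open left half-plane; when $M>\prod_j\gamma_j$ the single sign change gives a positive real root; and when $M=\prod_j\gamma_j$ the paper simply records $\det(J)=0$.

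Your modulus estimate $|g(\lambda)|\ge\prod_i\gamma_i$ for $\mathrm{Re}(\lambda)\ge 0$ is more elementary: it dispenses with both Descartes' rule and the external Metzler-matrix citation, handling the complex eigenvalues directly rather than first reducing to the real dominant one. In case~(3) you actually go further than the paper, supplying simplicity of the zero eigenvalue and transversality $d\lambda^*/dM>0$, whereas the paper stops at $\det(J)=0$. The trade-off is that the paper's route makes the Metzler structure of $J$ explicit (which is conceptually relevant for positive feedback), while yours is self-contained and sharper on the bifurcation clause.
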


 If an equilibrium $x^\eps$ of $\cS(Z,\eps)$ converges to the singular loop characteristic cell $\tau$, then condition (2) of Proposition \ref{prop: pos cycle stability} is satisfied if $\eps$ is small enough. Therefore, we have the following. 

\begin{proposition}\label{prop: pos sing cell unstable}
 Let $\bRN$ be a positive CFN. If $\tau$ is a singular equilibrium cell, then it is unstable. 
\end{proposition}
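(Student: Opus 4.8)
\textbf{Proof plan for Proposition \ref{prop: pos sing cell unstable}.}
The plan is to reduce the claim to condition (2) of Proposition \ref{prop: pos cycle stability}, which says that for a positive CFN an equilibrium $x$ of $\cS(Z,\eps)$ is unstable whenever $M(x,\eps) > \prod_j \gamma_j$, where $M(x,\eps) = \prod_{i=1}^N |\sigma_{i(i-1)}'(x;\eps)|$. So it suffices to show that if $x^\eps$ is the (by Theorem \ref{thm: FP} unique) equilibrium of $\cS(Z,\eps)$ converging to the fully singular loop characteristic cell $\tau$, then $M(x^\eps,\eps) > \prod_j \gamma_j$ once $\eps$ is small enough.

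The key point is that every direction of $\tau$ is singular, so $\pi_j(\tau) = \{\theta_{(j+1)j}\}$ for each $j$; that is, the $j$-th coordinate of $x^\eps$ converges to the threshold $\theta_{(j+1)j}$. First I would invoke the convergence $x_j^\eps \to \theta_{(j+1)j}$ and argue that for $\eps$ small the point $x^\eps$ lies in the neighborhood $\cU_2(\eps_{j+1,j})$ of $\theta_{(j+1)j}$ appearing in part 4 of Definition \ref{defn: sigmoid}. Here one must be slightly careful: part 4 only guarantees $\cU_2(\eps_{j+1,j}) \to \{\theta_{(j+1)j}\}$ as $\eps_{j+1,j}\to 0$, whereas a priori $x_j^\eps$ could approach $\theta_{(j+1)j}$ more slowly than the neighborhood shrinks. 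This is exactly the step I expect to be the main obstacle, and it is presumably handled in the proof of Theorem \ref{thm: FP} (or its supporting lemmas in Section \ref{sec: eq cell proofs}): the singular stationary point produced there is constructed precisely so that each coordinate sits inside $\cU_2$, since a balance $\gamma_j\theta_{(j+1)j} = \Lambda_j$ at the limit can only be achieved with the intermediate value $\sigma_{j(j-1)}(\cdot;\eps)$ takes inside $\cU_2$. I would therefore quote the relevant intermediate statement from Section \ref{sec: eq cell proofs} establishing $x^\eps_j \in \cU_2(\eps_{j+1,j})$ for all small $\eps$.

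Granting that, part 4 of Definition \ref{defn: sigmoid} gives a constant $C_2 > 0$ with $|\sigma_{(j+1)j}'(x_j^\eps;\eps)| \geq C_2 \eps_{(j+1)j}^{-1}$ for each $j$ (reindexing $i = j+1 \bmod N$). Multiplying over the cycle,
\begin{align*}
    M(x^\eps,\eps) \;=\; \prod_{i=1}^N |\sigma_{i(i-1)}'(x^\eps;\eps)| \;\geq\; \prod_{i=1}^N C_2\, \eps_{i(i-1)}^{-1} \;=\; C_2^N \prod_{i=1}^N \eps_{i(i-1)}^{-1},
\end{align*}
which diverges to $+\infty$ as $\eps \to 0$. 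In particular, $M(x^\eps,\eps) > \prod_j \gamma_j$ for all $\eps$ small enough, so condition (2) of Proposition \ref{prop: pos cycle stability} applies and $x^\eps$ is unstable. Since this holds for all sufficiently small $\eps$, the equilibrium cell $\tau$ is unstable by Definition \ref{defn: cell stability}, completing the argument. The only genuine content beyond Definition \ref{defn: sigmoid} and Proposition \ref{prop: pos cycle stability} is the localization $x_j^\eps \in \cU_2(\eps_{(j+1)j})$, which I would cite from the equilibrium-cell analysis rather than reprove here.
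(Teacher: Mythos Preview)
Your proposal is correct and matches the paper's approach essentially line for line: the paper's proof cites the proof of Theorem \ref{thm: FP} for the localization $x^\eps \in \cU_2$, concludes $M(x^\eps,\eps)\to\infty$, and then invokes Proposition \ref{prop: pos cycle stability}. You are in fact more explicit than the paper about the potential subtlety in the localization step, and you correctly identify that it is handled by the construction of $\tau(\eps)$ in Section \ref{sec: eq cell proofs}.
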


 Now we discuss negative cycles. \red{When there are $N\leq 2$ nodes, we can compute eigenvalues at an equilibrium and show that $\tau$ is stable. }

\begin{proposition}\label{prop: neg N<3 stable}
    Let $\bRN$ be a negative CFN with $N \leq 2$. \red{If $\tau$ is a singular equilibrium cell, then it is stable.} 
\end{proposition}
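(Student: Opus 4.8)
The plan is to read off stability directly from the characteristic polynomial supplied by Lemma~\ref{lem: cfs det(J)}. By Theorem~\ref{thm: FP}(b) there is a unique equilibrium $x^\eps$ of $\cS(Z,\eps)$ with $x^\eps\to\tau$ as $\eps\to 0$, and by Definition~\ref{defn: cell stability} it suffices to show this $x^\eps$ is asymptotically stable for all sufficiently small $\eps>0$; in fact the computation below shows that \emph{every} equilibrium of a negative CFS with $N\le 2$ is asymptotically stable.

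Since $\bRN$ is a negative CFN we have $\sgn(\rho)=-1$, so Lemma~\ref{lem: cfs det(J)}, writing $M=M(x^\eps,\eps)\ge 0$, gives
\[
 \det\bigl(J(x^\eps;\eps)-\lambda I\bigr) = (-1)^N\Bigl(\prod_{i=1}^N(\gamma_i+\lambda) + M\Bigr).
\]
For $N=1$ this is $-(\lambda+\gamma_1+M)$, whose only root $\lambda=-(\gamma_1+M)$ is negative because $\gamma_1>0$. For $N=2$ the eigenvalues are the roots of
\[
 (\gamma_1+\lambda)(\gamma_2+\lambda) + M = \lambda^2 + (\gamma_1+\gamma_2)\lambda + (\gamma_1\gamma_2+M),
\]
a quadratic whose coefficients $\gamma_1+\gamma_2$ and $\gamma_1\gamma_2+M$ are both strictly positive since $\gamma_1,\gamma_2>0$ and $M\ge 0$. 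A real quadratic $\lambda^2+b\lambda+c$ with $b,c>0$ has all roots in the open left half plane (by Routh--Hurwitz; or directly: if the two roots are real their product $c>0$ and sum $-b<0$ force both to be negative, and if they are complex conjugate their common real part $-b/2$ is negative). Hence every eigenvalue of $J(x^\eps;\eps)$ has negative real part, so $x^\eps$ is asymptotically stable and $\tau$ is a stable equilibrium cell.

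\textbf{Main obstacle.} There is essentially no obstacle: the content is the sign bookkeeping that, for a negative cycle, the term $-\sgn(\rho)M=+M$ enters the characteristic polynomial with a favorable sign and therefore keeps all of its coefficients positive. The hypothesis $N\le 2$ is exactly what makes positivity of the coefficients equivalent to stability; for $N\ge 3$ the Routh--Hurwitz conditions are more delicate and a sufficiently large $M$ can destabilize the equilibrium (this is the mechanism behind Hopf bifurcations in long negative feedback loops), so longer negative cycles require a separate analysis.
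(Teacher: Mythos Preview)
Your proof is correct and takes essentially the same approach as the paper's: both amount to showing that for a negative cycle the characteristic polynomial has all positive coefficients, which for degree $\le 2$ forces the roots into the open left half plane. The only cosmetic difference is that you invoke Lemma~\ref{lem: cfs det(J)} to read off the characteristic polynomial, whereas the paper computes the trace and determinant of $J$ directly from the Jacobian entries; these are the same calculation.
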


To address negative cyclic feedback systems with $N>2$ we make an additional assumption that the $\gamma$'s are identical, that is $\Gamma = I$. This allows us to explicitly compute all the eigenvalues of $J$. 

\begin{lemma}\label{lem: eigenvalues}
 Let  $\bRN$  be a CFN. Consider switching parameter $Z$ with 
$\Gamma = I$. Let $\lambda_{k}(x,\eps)$ for $k=0,\ldots,N-1$ be the eigenvalues of the Jacobian $J(x;\eps)$ evaluated at $x$. Then 
\begin{align}\label{eq: eigenvalues}
 \lambda_k(x,\eps) = \begin{cases}
                -1 + (e^{2\pi i k}M(x,\eps))^{\tfrac{1}{N}}, & \quad \sgn(\rho) = 1\\
                -1 + (e^{\pi i + 2\pi i k}M(x,\eps))^{\tfrac{1}{N}}, & \quad \sgn(\rho) = -1.
               \end{cases}
\end{align}
An eigenvalue with largest real part is 
\begin{align}
 \lambda_0(x,\eps) = \begin{cases}
                -1 + M(x,\eps)^{\frac{1}{N}}, & \quad \sgn(\rho) = 1\\
                -1 + (e^{\pi i}M(x,\eps))^{\frac{1}{N}}, & \quad \sgn(\rho) = -1.
               \end{cases}
\end{align}
\end{lemma}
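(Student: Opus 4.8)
The plan is to prove Lemma~\ref{lem: eigenvalues} by directly diagonalizing the Jacobian \eqref{eq: CFS Jacobian} in the special case $\Gamma = I$, taking advantage of its nearly-circulant structure. First I would write $J = -I + C$, where $C$ is the matrix with entries $C_{j,j-1} = \sigma_{j(j-1)}'$ (indices mod $N$) and zeros elsewhere; this is a weighted cyclic shift. Since the $\gamma_i$ are all equal to $1$, the $-I$ term only shifts every eigenvalue by $-1$, so it suffices to compute the spectrum of $C$. The characteristic polynomial of $C$ is obtained either from Lemma~\ref{lem: cfs det(J)} by setting $\gamma_i = 1$ and replacing $\lambda$ by $\lambda + 1$, or directly by the same Leibniz-formula expansion: one gets $\det(C - \mu I) = (-1)^N(\mu^N - \sgn(\rho) M(x,\eps))$, using that the only nonzero contributions to the determinant come from the identity permutation (giving $(-\mu)^N$) and the full $N$-cycle (giving $\pm\prod_i \sigma_{i(i-1)}' = \pm M$ with sign $\sgn(\rho)\cdot(-1)^{N-1}$ from the cycle's parity). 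Hence the eigenvalues $\mu$ of $C$ are exactly the $N$-th roots of $\sgn(\rho) M(x,\eps)$.

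Next I would solve $\mu^N = \sgn(\rho)M(x,\eps)$ explicitly. Writing $M = M(x,\eps) \ge 0$: when $\sgn(\rho) = 1$ the roots are $\mu_k = e^{2\pi i k/N} M^{1/N}$ for $k = 0,\ldots,N-1$; when $\sgn(\rho) = -1$ we write $-M = e^{\pi i} M$, so the roots are $\mu_k = e^{(\pi i + 2\pi i k)/N} M^{1/N}$. Adding $-1$ from the $-I$ shift gives the stated formula \eqref{eq: eigenvalues} for $\lambda_k(x,\eps)$. (I should note the degenerate case $M = 0$, where $J = -I$ has the single eigenvalue $-1$ of multiplicity $N$; the formula still reads $\lambda_k = -1$ for all $k$, so this is consistent.)

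Finally, to identify the eigenvalue of largest real part I would compare $\mathrm{Re}(\mu_k)$ over $k$. In the positive case, $\mathrm{Re}(\mu_k) = M^{1/N}\cos(2\pi k/N)$ is maximized at $k = 0$, giving $\lambda_0 = -1 + M^{1/N}$. In the negative case, $\mathrm{Re}(\mu_k) = M^{1/N}\cos\!\big(\tfrac{(2k+1)\pi}{N}\big)$, and the angle $\tfrac{(2k+1)\pi}{N}$ closest to $0$ (hence with largest cosine) is $\tfrac{\pi}{N}$, attained at $k = 0$, so $\lambda_0 = -1 + e^{\pi i/N} M^{1/N}$, which matches the claim (the claim writes $(e^{\pi i}M)^{1/N}$, interpreted as the principal $N$-th root $e^{\pi i/N}M^{1/N}$). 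I expect no serious obstacle here; the one point requiring a little care is bookkeeping the sign of the $N$-cycle permutation in the Leibniz expansion and checking it combines with $(-1)^N$ to reproduce exactly the $\sgn(\rho)$ appearing in Lemma~\ref{lem: cfs det(J)}, so that the two derivations of the characteristic polynomial agree. Everything else is an explicit computation with roots of unity.
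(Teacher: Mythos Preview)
Your proposal is correct and matches the paper's approach: the paper does not give a separate proof of Lemma~\ref{lem: eigenvalues}, treating it as an immediate consequence of the characteristic polynomial formula in Lemma~\ref{lem: cfs det(J)} specialized to $\Gamma = I$, which is exactly what you do (your decomposition $J = -I + C$ is just a convenient way to phrase the substitution $\mu = \lambda + 1$). Your handling of the $N$-th roots and the identification of the eigenvalue with largest real part via $\cos(2\pi k/N)$ and $\cos((2k+1)\pi/N)$ is the intended computation.
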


The next proposition, often referred to as the secant condition for negative feedback systems \cite{tyson78,thron91,sontag02,sontag06,forger11}, follows immediately from the computation of the eigenvalues. 

\begin{proposition}\label{prop: neg cfs stability}
 Let $\bRN$ be a negative CFN, $Z = (L,U,\theta,\Gamma)$ be a switching parameter with $\Gamma = I$, and $x$ be an equilibrium of $\cS(Z,\eps)$.  If $N>2$, then stability can be determined as follows. 
 \begin{enumerate}
  \item If $M(x,\eps) < \sec\left(\tfrac{\pi}{N}\right)^N$ then $x$ is asymptotically stable. 
 
  \item If $M(x,\eps) > \sec\left(\tfrac{\pi}{N}\right)^N$ then $x$ is unstable. 
  
  \item If $M(x,\eps) = \sec\left(\tfrac{\pi}{N}\right)^N$ then $\cS(Z,\eps)$ has a Hopf bifurcation at $x$. 
 \end{enumerate} 
\end{proposition}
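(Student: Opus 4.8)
The plan is to read off the eigenvalues of $J(x;\eps)$ from Lemma \ref{lem: eigenvalues}, which applies because $\Gamma = I$ and $\bRN$ is a negative CFN ($\sgn(\rho) = -1$), and then do a short trigonometric bookkeeping of their real parts. Writing $M = M(x,\eps)$, Lemma \ref{lem: eigenvalues} gives $\lambda_k = -1 + M^{1/N} e^{i\pi(2k+1)/N}$, so $\mathrm{Re}\,\lambda_k = -1 + M^{1/N}\cos\!\big(\tfrac{(2k+1)\pi}{N}\big)$ for $k = 0,\dots,N-1$. First I would show that, as $k$ ranges over $0,\dots,N-1$, the quantity $\cos\!\big(\tfrac{(2k+1)\pi}{N}\big)$ attains its maximum value $\cos(\pi/N) > 0$ exactly at $k = 0$ and $k = N-1$ (which yield the complex-conjugate pair $\lambda_0, \lambda_{N-1} = \overline{\lambda_0}$), while every other index gives a value at most $\cos(3\pi/N)$; this is the only place the hypothesis $N > 2$ enters, ensuring $\cos(3\pi/N) < \cos(\pi/N)$ strictly so the second-largest real part is bounded away from the largest. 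Hence the spectral abscissa of $J(x;\eps)$ is $-1 + M^{1/N}\cos(\pi/N)$.

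Parts (1) and (2) then follow at once, since $-1 + M^{1/N}\cos(\pi/N) < 0$, $> 0$, or $= 0$ precisely according as $M < \sec(\pi/N)^N$, $M > \sec(\pi/N)^N$, or $M = \sec(\pi/N)^N$ (using $\sec(\pi/N)^N = \cos(\pi/N)^{-N}$). In the first two cases all eigenvalues are off the imaginary axis, so $x = x^\eps$ is hyperbolic and is asymptotically stable, respectively unstable, by the standard linearization theorem. For part (3), at the critical value $M = \sec(\pi/N)^N$ one has $\lambda_0 = i\,M^{1/N}\sin(\pi/N)$ and $\lambda_{N-1} = \overline{\lambda_0}$, a purely imaginary and nonzero pair (nonzero because $\sin(\pi/N) \neq 0$), and by the estimate above this is the unique pair on the imaginary axis, all remaining eigenvalues satisfying $\mathrm{Re}\,\lambda_k \le -1 + M^{1/N}\cos(3\pi/N) < 0$. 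To upgrade the eigenvalue picture to an actual bifurcation I would introduce a scalar parameter, e.g.\ rescale $\eps$ along a ray, so that the equilibrium $x^\eps$ persists (Theorem \ref{thm: FP}) and $M(x^\eps,\eps)$ becomes a continuous, monotone function of that scalar passing through $\sec(\pi/N)^N$ with nonzero derivative; then $\tfrac{d}{ds}\mathrm{Re}\,\lambda_0 = \cos(\pi/N)\,\tfrac{d}{ds}\big(M^{1/N}\big) \neq 0$ supplies the eigenvalue-crossing (transversality) condition, and the Hopf bifurcation theorem applies.

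The eigenvalue accounting for (1)–(2) is entirely routine given Lemma \ref{lem: eigenvalues}. The main obstacle is the bifurcation claim in (3): one must pin down the parameter with respect to which the Hopf bifurcation is asserted and verify that $M(x^\eps,\eps)$ genuinely crosses the critical value transversally rather than remaining stationary there, which uses the quantitative behavior of the sigmoidal perturbations near their thresholds together with the persistence of $x^\eps$ from Theorem \ref{thm: FP}. The nondegeneracy of the first Lyapunov coefficient (criticality of the Hopf) is not part of the statement, so it need not be addressed; note also that $N \le 2$ is genuinely excluded here, consistent with Proposition \ref{prop: neg N<3 stable}.
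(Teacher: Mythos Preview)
Your approach is exactly the paper's: the paper states only that the proposition ``follows immediately from the computation of the eigenvalues'' in Lemma \ref{lem: eigenvalues}, and you carry out precisely that computation, identifying $\mathrm{Re}\,\lambda_0 = -1 + M^{1/N}\cos(\pi/N)$ as the spectral abscissa and comparing it to zero. Your treatment of part (3) is in fact more careful than the paper's, which does not spell out the bifurcation parameter or the transversality condition; your caveat that monotone crossing of $M(x^\eps,\eps)$ through the critical value must be justified is well taken, though the paper does not address it either.
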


If $N>2$ and an equilibrium $x^\eps$ converges to the singular loop characteristic cell $\tau$, the second condition of Proposition \ref{prop: neg cfs stability} holds when $\eps$ is small enough and we have the following. 

\begin{proposition}\label{prop: neg sing cell stability}
 Let $\bRN$ be a negative CFN \red{with $N>2$} and \red{and $Z = (L,U,\theta,\Gamma)$ be a switching parameter with $\Gamma = I$}. If $\tau$ is a singular equilibrium cell it is unstable.
\end{proposition}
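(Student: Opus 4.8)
The plan is to deduce instability from Proposition \ref{prop: neg cfs stability}(2) by showing that the quantity $M(x^\eps,\eps)$ controlling the eigenvalues of $J(x^\eps;\eps)$ diverges as $\eps\to 0$. By Theorem \ref{thm: FP}(b) there is $0<A\in\bR^{N\times N}$ so that for $\eps<A$ the sigmoidal system $\cS(Z,\eps)$ has a unique equilibrium $x^\eps$ with $x^\eps\to\tau$. Since $\tau$ is the loop characteristic cell with $\sd(\tau)=V$, every coordinate satisfies $x^\eps_j\to\theta_{(j+1)j}$; in particular $x^\eps_{j-1}\to\theta_{j(j-1)}$, the threshold of the switching function $\sigma_{j(j-1)}$, whose argument is $x_{j-1}$.

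First I would establish $M(x^\eps,\eps)=\prod_{j=1}^N|\sigma_{j(j-1)}'(x^\eps_{j-1};\eps)|\to\infty$. Because $\tau$ is a singular equilibrium cell, no node can be inessential---Lemma \ref{lem: inessential cfs} shows an inessential node forces the equilibrium cell to be regular---so $L_{j(j-1)}<\gamma_j\theta_{(j+1)j}<U_{j(j-1)}$ for every $j$. Evaluating the $j$-th equilibrium equation of \eqref{eq: sigmoid sys} gives $\sigma_{j(j-1)}(x^\eps_{j-1};\eps)=\gamma_j x^\eps_j\to\gamma_j\theta_{(j+1)j}$, which is interior to $(L_{j(j-1)},U_{j(j-1)})$. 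Combining this with the monotonicity of $\sigma_{j(j-1)}(\cdot;\eps)$ and the axioms in Definition \ref{defn: sigmoid}---in particular that $\sigma_{j(j-1)}(\cU_2(\eps_{j(j-1)});\eps_{j(j-1)})\to(L_{j(j-1)},U_{j(j-1)})$ while $\cU_2(\eps_{j(j-1)})\to\{\theta_{j(j-1)}\}$ and $\sigma_{j(j-1)}(\cdot;\eps)$ is strictly monotone on $\cU_2(\eps_{j(j-1)})$ by property (4)---one concludes that $x^\eps_{j-1}\in\cU_2(\eps_{j(j-1)})$ for $\eps$ small, and hence $|\sigma_{j(j-1)}'(x^\eps_{j-1};\eps)|\geq C_2\eps_{j(j-1)}^{-1}$. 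Taking the product over $j$ gives $M(x^\eps,\eps)\geq\prod_{j=1}^N C_2\eps_{j(j-1)}^{-1}\to\infty$. This is the same divergence that underlies the positive case, Proposition \ref{prop: pos sing cell unstable}.

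To finish, note that the hypotheses $N>2$ and $\Gamma=I$ place us in the setting of Proposition \ref{prop: neg cfs stability}. Since $M(x^\eps,\eps)\to\infty$, for $\eps$ small enough $M(x^\eps,\eps)>\sec(\pi/N)^N$, so part (2) of that proposition gives that $x^\eps$ is unstable. By Definition \ref{defn: cell stability}, $\tau$ is an unstable equilibrium cell.

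I expect the main obstacle to be the middle paragraph: showing that $x^\eps_{j-1}$ genuinely enters the steep neighborhood $\cU_2(\eps_{j(j-1)})$, i.e., that the rate at which the equilibrium coordinate approaches the threshold is controlled relative to the rate at which $\cU_2$ shrinks. This needs the equilibrium relation together with essentiality to pin the sigmoid value to a level strictly inside $(L_{j(j-1)},U_{j(j-1)})$, and then the precise properties of $\cU_2$ to locate the preimage of that level inside $\cU_2$; once this is in hand, the remaining steps are direct applications of Lemma \ref{lem: eigenvalues} and Proposition \ref{prop: neg cfs stability}.
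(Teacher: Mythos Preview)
Your proposal is correct and follows essentially the same approach as the paper: show $M(x^\eps,\eps)\to\infty$ by establishing that each coordinate of $x^\eps$ lies in the steep neighborhood $\cU_2$, then invoke Proposition~\ref{prop: neg cfs stability}(2). The paper's proof is terser---it simply cites the proof of Theorem~\ref{thm: FP}, where the equilibrium was constructed inside $\tau(\eps)=\prod_j\ol{\cU_{2,j}(\eps)}$---whereas you give a self-contained argument via the equilibrium relation $\sigma_{j(j-1)}(x^\eps_{j-1};\eps)=\gamma_j x^\eps_j\to\gamma_j\theta_{(j+1)j}\in(L_{j(j-1)},U_{j(j-1)})$ together with essentiality; both routes arrive at the same place.
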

\red{
We remark that while Proposition \ref{prop: neg sing cell stability} is a statement about sigmoidal CFS, \cite{farcot09} proves a stronger statement for switching CFS. In \cite{farcot09} it was shown that a negative switching CFS with $N>2$ and no inessential nodes has a stable periodic orbit and that the singular loop characteristic cell $\tau$ is an unstable source. This was proven even for $\Gamma \neq I$. We suspect Proposition \ref{prop: neg sing cell stability} holds for $\Gamma \neq I$ as well but do not pursue it here. We also suspect that for small $\eps$ the sigmoidal CFS with $N>2$ has a stable periodic orbit. This is consistent with Proposition \ref{prop: neg cfs stability} which suggests the existence of a supercritical Hopf bifurcation as $\eps$ increases. 

}


\section{Equilibria, Stability, and Bifurcations in General Networks}\label{sec: gen sys results}

To characterize the equilibrium cells, stability and bifurcations of a network $\bRN$ which is not a cyclic feedback network, we decompose $\SWITCH(Z)$ locally on a neighborhood of an arbitrary loop characteristic cell into cyclic feedback systems and then apply the results of Section \ref{sec: cfs results}. 
Before proceeding to describe the decomposition, we  define the neighborhood of a cell on which the local decomposition is valid.

\begin{definition}\label{defn: cell neighborhood}
 For $\tau\in\chi$, the \emph{cell neighborhood} of $\tau$, denoted $\cN(\tau)$ is defined by
 \begin{align*}
     \cN(\tau) := \{\kappa\in\chi\;|\; \tau\subset\ol{\kappa}\}, 
 \end{align*} 
 where $\ol{\kappa}$ is the closure of $\kappa$.
\end{definition}

\red{We will write $x\in \cN(\kappa)$ to denote $x\in \tau$ for some $\tau \in \cN(\kappa)$. Note that the cell neighborhood of a regular cell $\kappa$ consists only of $\kappa$. In Figure \ref{fig: neighbors}(a), the cell neighborhood of the singular cell $\kappa$ is given by $\cN(\kappa) = \{\kappa, \kappa_1^-, \kappa_1^+\}$. The cell neighborhood of the singular cell $\til{\tau}$ of the positive toggle plus system consists of all cells contained in the interior of the gray shaded region of Figure \ref{fig: cones}(a). }

\subsection{Local Decomposition of $\SWITCH(Z)$ into Cyclic Feedback Systems}\label{sec: decomposition}

The idea behind the decomposition is to examine the values of $\Lambda$ on the cell neighborhood of a loop characteristic cell $\tau$. Using the next lemma we will show  that for any regular direction $r$ of $\tau$, $\Lambda_r$ is constant on $\cN(\tau)$. This will be used to show that regular directions enter trivially into the decomposition. On the other hand, for any singular direction $s$ of $\tau$, the lemma will be used to show that $\Lambda_{\rho(s)}$ takes one of two possible values. Furthermore, the value of $\Lambda_{\rho(s)}$ can only change in the $s$ direction. This will ultimately lead to the decomposition into cyclic feedback systems along the cycles in the permutation $\rho^\tau$.  See Section \ref{sec: eq cell proofs} for the proof of the lemma.

\begin{lemma}\label{lem: top cells}
Let $\tau\in\chi$ and $(j,i)\in E$.  If \red{$j$ is a regular direction of $\tau$ or $j\in \sd(\tau)$ and } $i\neq \rho^\tau(j)$ then 
 \begin{itemize}
     \item $\sigma_{ij}(\tau)$ is well defined,  
     \item for all $\kappa\in\cN(\tau)$, we have $\sigma_{ij}(\kappa) = \sigma_{ij}(\tau)$ is independent of $\kappa$.
 \end{itemize}

Consequently  if $ i \not \in \{\rho^\tau(j)\;|\; j\in\sd(\tau)\}$, then
 \begin{itemize}
     \item $\Lambda_i(\tau)$ is well defined,
     \item  for all $\kappa\in\cN(\tau)$ we have $\Lambda_i(\kappa) = \Lambda_i(\tau)$ is independent of $\kappa$, 
 \end{itemize}
\end{lemma}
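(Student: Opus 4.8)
The plan is to unpack the definitions of cell, singular/regular direction, neighbor, and cell neighborhood, and track exactly which coordinates can vary as we move from $\tau$ to an adjacent cell $\kappa\in\cN(\tau)$. The key structural observation is: if $\kappa\in\cN(\tau)$, i.e. $\tau\subset\overline{\kappa}$, then for every regular direction $r$ of $\tau$ we have $\pi_r(\kappa)=\pi_r(\tau)$ (the interval cannot change, since $\tau$ already occupies a full open interval between consecutive thresholds in that coordinate), while for every singular direction $s$ of $\tau$, $\pi_s(\kappa)$ is either $\{\theta_{\rho(s)s}\}$, or one of the two adjacent open intervals $(\theta_{\rho_-(s)s},\theta_{\rho(s)s})$, $(\theta_{\rho(s)s},\theta_{\rho_+(s)s})$. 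In particular, in each coordinate $s$ the value $x_s$ for $x\in\kappa$ stays strictly between $\theta_{\rho_-(s)s}$ and $\theta_{\rho_+(s)s}$, and in coordinate $r$ it stays strictly between $\theta_{a_r r}$ and $\theta_{b_r r}$.

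First I would establish the first bullet. Fix $(j,i)\in E$ with $j$ regular, or $j\in\sd(\tau)$ with $i\neq\rho^\tau(j)$. We must show $\sigma_{ij}$ is constant on all of $\cN(\tau)$; by equation \eqref{eq: sigma} it suffices to show that $\theta_{ij}$ is not strictly between the extreme possible values of $x_j$ over $x\in\bigcup\cN(\tau)$, equivalently that $x_j\neq\theta_{ij}$ and $\sgn(x_j-\theta_{ij})$ is fixed there. If $j$ is a regular direction of $\tau$, then $\pi_j(\tau)=(\theta_{a_j j},\theta_{b_j j})$ with $\theta_{a_j j},\theta_{b_j j}$ consecutive in $\Theta_j(Z)$; since $\theta_{ij}\in\Theta_j(Z)$, it must satisfy $\theta_{ij}\le\theta_{a_j j}$ or $\theta_{ij}\ge\theta_{b_j j}$, so $\theta_{ij}$ lies outside the open interval $(\theta_{a_j j},\theta_{b_j j})=\pi_j(\kappa)$ for every $\kappa\in\cN(\tau)$, giving a constant sign of $x_j-\theta_{ij}$ and hence a well-defined constant $\sigma_{ij}(\tau)=\sigma_{ij}(\kappa)$. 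If instead $j\in\sd(\tau)$ with $i\neq\rho^\tau(j)=\rho(j)$, then $\pi_j(\tau)=\{\theta_{\rho(j)j}\}$ and $\theta_{ij}\neq\theta_{\rho(j)j}$ by threshold regularity (distinct targets of $j$ have distinct thresholds), so $\theta_{ij}$ is either $\le\theta_{\rho_-(j)j}$ or $\ge\theta_{\rho_+(j)j}$, hence again lies outside the range $(\theta_{\rho_-(j)j},\theta_{\rho_+(j)j})$ swept by $x_j$ over $\cN(\tau)$; so $\sigma_{ij}$ is defined and constant there. This proves both bullets of the first claim simultaneously.

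For the consequence, suppose $i\notin\{\rho^\tau(j)\;|\;j\in\sd(\tau)\}$. Then by definition of $\Lambda_i$ in \eqref{eq: Lambda} we only need each factor $\sigma_{ij}$ for $j\in\Sources(i)$ to be well-defined and constant on $\cN(\tau)$. For each such $j$: if $j$ is a regular direction of $\tau$, the first part applies directly; if $j\in\sd(\tau)$, then since $i\neq\rho^\tau(j)$ (as $i$ is not in the image of $\rho^\tau$ restricted to $\sd(\tau)$), the first part again applies. Hence every factor is constant on $\cN(\tau)$, so $\Lambda_i(\tau)$ is well-defined and $\Lambda_i(\kappa)=\Lambda_i(\tau)$ for all $\kappa\in\cN(\tau)$. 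The main obstacle — and the only place real care is needed — is the bookkeeping in the first step: correctly identifying that membership in $\cN(\tau)$ forces $\pi_j$ to stay within the range bounded by the neighboring thresholds $\theta_{\rho_\pm(j)j}$ (for singular $j$) or $\theta_{a_j j},\theta_{b_j j}$ (for regular $j$), and then invoking threshold regularity to exclude the borderline case $\theta_{ij}=\theta_{\rho(j)j}$ when $i\neq\rho(j)$. Everything else is a direct application of \eqref{eq: sigma} and \eqref{eq: Lambda}.
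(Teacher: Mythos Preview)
Your proposal is correct and follows essentially the same approach as the paper's proof: both argue by cases on whether $j$ is regular or singular, use the fact that $\tau\subset\overline{\kappa}$ forces $\pi_j(\kappa)$ to lie within the interval bounded by the neighboring thresholds, and then observe that $\theta_{ij}$ falls outside that interval (invoking threshold regularity in the singular case) so $\sigma_{ij}$ is constant. The consequence for $\Lambda_i$ is deduced identically in both.
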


 Let $\tau\in\chi$ be a loop characteristic cell, $\rho = \rho^\tau$ be the corresponding permutation.
 By Lemma \ref{lem: top cells}, 
 for each $x$ in a cell-neighborhood of $\tau$, $x\in\cN(\tau)$, and each regular direction $r$, the value of $\dot{x}_r$ in the switching system \eqref{eq: switch sys} is independent of every other variable. 
 
 Also by Lemma \ref{lem: top cells}, $\sigma_{sj}(\kappa) = \sigma_{sj}(\tau)$ is well defined for each singular direction $s$ and $ j \in \Sources(s)\setminus\{\rho^{-1}(s)\}$ on any cell $\kappa \in \cN(\tau)$. Therefore,  the value of $\dot{x}_s$ in the switching system \eqref{eq: switch sys} for any singular direction $s$ is independent of the regular directions.
 We conclude that on $\cN(\tau)$, the switching system $\SWITCH(Z)$ decomposes into two independent systems: one corresponding to the singular directions and one corresponding to the regular directions. Since for each regular direction $r$, $\dot{x}_r$ in  \eqref{eq: switch sys} depends only on $x_r$ itself, the regular directions system  consists of a collection of uncoupled one dimensional systems. 
 
 The dynamics of the singular directions decomposes according to the cycles that generate the permutation $\rho$. Let $\rho|_{\sd(\tau)} = (c_1,\ldots,c_n)$ be the cycle decomposition of $\rho$ restricted to the singular directions.  Let $\ell_d = \length(c_d)$ and $s_d = \sum_{j<d} \ell_j$.  We reorder the variables so that $c_d$ acts on $\{s_d+1, s_d+2,\ldots,s_d + \ell_d\}$ and $c_d(s_d+i) = s_d + i+1$ for $i<\ell_d$ and $c_d(s_d+\ell_d) = s_d + 1$. On $\cN(\tau)$, the dynamics of the variables $x^d = (x_{s_d+1},\ldots x_{s_d+\ell_d})$ are independent of the value of all other variables so that the dynamics of the singular directions can be written as $n$ independent systems. Within each system, $\dot{x}_{s}$ depends only on $\rho^{-1}(s)$ so each of these $n$ systems are cyclic feedback systems. 
 
 We conclude that the switching system $\SWITCH(Z)$ restricted to $\cN(\tau)$ decomposes into $d$ cyclic feedback systems and a diagonal system corresponding to regular directions. The sign of the cycle
 \[
 \sgn(c_d) := \prod_{j=s_d+1}^{s_d+\ell_d} \bs_{c_d(j)j}
 \]
 determines whether the $d$th system is a positive or negative CFS.

 To explicitly write the decomposition we define $\ell_{n+1} := N-s_{n+1}$ to be the number of regular directions. For $d=1,\ldots,n+1$ we define the projections of the cell neighborhood $\cN(\tau)$ and cells $\kappa \in \cN(\tau)$   
 \begin{align*}
 \cN^{d}(\tau) := \prod_{j=s_d + 1}^{s_d + \ell_d} \pi_j(\cN(\tau)) \qquad \mbox{and} \qquad \kappa^d := \prod_{j=s_d + 1}^{s_d+\ell_d} \pi_j(\kappa).
\end{align*}
  We set $\Lambda(\cdot;\tau):=\Lambda|_{\cN(\tau)}$ to be the restriction of $\Lambda$ onto $\cN(\tau)$. We then define 
  \[
   \Lambda^d(\cdot;\tau):=(\Lambda_{s_d+1}(\cdot;\tau),\ldots, \Lambda_{s_d + \ell_d}(\cdot;\tau))
  \]
  to be the projection of the resulting function  onto the directions in the $d$-th subsystem. 

  Further,  let $\Gamma^d$ be the $\ell_d\times \ell_d$ diagonal matrix with entries $\Gamma_{ii} = \gamma_{s_{d}+i}$ for $i=1,\ldots,\ell_d$. 
  The dynamics for the $d$-th subsystem is given explicitly by 
\begin{align}\label{eq: cycle sys}
    \dot{x}^d = -\Gamma^d x^d + \Lambda^d(x^d;\eps,\tau), \quad x^d\in \cN^d(\tau;\eps) 
\end{align}
which we denote by $\SWITCH^d(Z;\tau)$. On the collection of cells $\cN(\tau)$, the system is  
\[
\SWITCH(Z;\tau) := (\SWITCH^1(Z;\tau),\ldots,\SWITCH^n(Z;\tau),\SWITCH^{n+1}(Z;\tau)). 
\]
 We denote the cyclic feedback network associated with $\SWITCH^d(Z;\tau)$ by $\bRN^d(\tau)$. 

\red{
\begin{example}\label{ex: decomp}
    Consider the positive toggle plus system of Example \ref{ex: positive toggle}. For the loop characteristic cell $\til{\tau} = \{\theta_{11}\}\times \{\theta_{22}\}$ pictured in Figure \ref{fig: neighbors}(b), the singular directions of $\til{\tau}$ are both directions $1$ and $2$ and the cycle decomposition is given by $\rho = (c_1,c_2)$ where $c_1(1) =1$ and $c_2(2) = 2$. The cycle decomposition corresponds to the disjoint loops $1\to1$ and $2\to 2$ in the network. The cell neighborhood of $\til{\tau}$, $\cN(\til{\tau})$ is the gray shaded region in Figure \ref{fig: cones}(a). For $x\in \cN(\til{\tau})$, $x_1>\theta_{21}$ and $x_2 > \theta_{12}$ so that 
    \begin{align*}
        \dot{x}_1 &= -\gamma_1 x_1 + \Lambda_1(x) = -\gamma_1 x_1 + U_{12}\sigma_{11}(x_1) \\
        \dot{x}_2 &= -\gamma_1 x_2 + \Lambda_2(x) = -\gamma_2 x_2 + U_{21}\sigma_{22}(x_2).
    \end{align*}
    The decomposition at $\til{\tau}$, $\SWITCH(Z) = (\SWITCH^1(Z;\til{\tau}), \SWITCH^2(Z;\til{\tau}))$ is defined by $x^1 = x_1$, $x^2 = x_2$, $\Gamma^1 = \gamma_1$, $\Gamma^2 = \gamma_2$, $\Lambda^1(x_1;\til{\tau}) = U_{12}\sigma_{11}(x_1)$, and $\Lambda^2(x_2;\til{\tau}) = U_{21}\sigma_{22}(x_2)$. Note that the superscripts index the cycles, not the directions.  
\end{example}
}

Next we show every regular cell  in $\chi$  lies in a neighborhood of at least one singular loop characteristic cell.  
This implies that every regular cell will be in at least one neighborhood $\cN(\tau)$ on which decomposition into cyclic feedback systems is applicable. Therefore, in spite of of being local, the  decomposition into cyclic feedback systems affects all the cells in $\chi$.  In particular, all equilibrium cells of $\SWITCH(Z)$ are contained in at least one neighborhood $\cN(\tau)$.  

\begin{lemma}
Suppose that every node of $\bRN$ has  an out-edge.  Then for every regular cell $\kappa \in \chi^{(N)}$, there is a singular loop characteristic cell $\tau \in \chi$ so that $\kappa \in \cN(\tau)$. 
\end{lemma}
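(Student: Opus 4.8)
The plan is to construct, for a given regular cell $\kappa \in \chi^{(N)}$, an explicit singular loop characteristic cell $\tau$ whose closure contains $\kappa$. Since $\kappa = \prod_{j=1}^N (\theta_{a_j j}, \theta_{b_j j})$, the candidate cells $\tau$ with $\kappa \in \cN(\tau)$ are exactly the cells obtained by choosing, in each coordinate $j$, either to keep the open interval $(\theta_{a_j j}, \theta_{b_j j})$, or to collapse it to one of its two finite endpoints $\{\theta_{a_j j}\}$ or $\{\theta_{b_j j}\}$ (with the caveat that $\pm\infty$ endpoints cannot be collapsed, since they are not actual thresholds $\theta_{ij}$). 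Thus the task reduces to a purely combinatorial one: choose for each coordinate a threshold to collapse to, such that the resulting assignment $j \mapsto i_j$ (where $\pi_j(\tau) = \{\theta_{i_j j}\}$) defines a map $\rho^\tau$ on the collapsed coordinates that is a permutation, i.e.\ $\rho^\tau$ restricted to $\sd(\tau)$ is a bijection of $\sd(\tau)$ onto itself.

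First I would set up the bipartite/functional structure. For each node $j$, the finite elements of $\Theta_j(Z)$ are indexed by $\Targets(j)$, so collapsing coordinate $j$ to a threshold amounts to selecting an element $i \in \Targets(j)$; the selected $i$ becomes $\rho^\tau(j)$. The hypothesis that every node has an out-edge guarantees $\Targets(j) \neq \emptyset$ for every $j$, so such a selection is always possible in principle — but we need the global selection to be consistent, meaning the set $S$ of collapsed coordinates and the map $\rho^\tau\colon S \to V$ must satisfy $\rho^\tau(S) = S$ and $\rho^\tau|_S$ injective. The natural approach is greedy/iterative: pick any node $j_0$, and follow the forced chain $j_0 \mapsto i_1 \mapsto i_2 \mapsto \cdots$ where at each step we collapse the current node to one of its out-targets. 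Since $V$ is finite, this chain must eventually revisit a node, producing a cycle $j_m \to j_{m+1} \to \cdots \to j_m$ in the network; the nodes on this cycle form a set on which the selection is a permutation. Collapse exactly those coordinates (leaving all others as open intervals), take $\tau$ to be the resulting cell, and then $\sd(\tau)$ is precisely the cycle and $\rho^\tau|_{\sd(\tau)}$ is a cyclic permutation, so $\tau \in \LCC$; moreover $\tau$ is singular (the cycle is nonempty) and $\kappa \in \cN(\tau)$ by construction since we only collapsed to endpoints of $\kappa$'s intervals.

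The one genuine subtlety — and what I expect to be the main obstacle — is the presence of the infinite "thresholds" $\theta_{-\infty j} = 0$ and $\theta_{\infty j} = \infty$. If a coordinate $j$ of $\kappa$ has $\pi_j(\kappa) = (0, \theta_{b_j j})$ or $(\theta_{a_j j}, \infty)$ (or even $(0,\infty)$, if $j$ has no incoming edges), then we cannot collapse to the infinite endpoint, only to the finite one, so the "choice" at such a node may be forced or, in the $(0,\infty)$ case, impossible. This matters for the chain-following argument: when we arrive at a node $j$ whose only available collapse is to a specific target, we must take it, which is fine; but if we arrive at a node with $\pi_j(\kappa) = (0,\infty)$ we are stuck. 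However, a node $j$ with $\pi_j(\kappa) = (0,\infty)$ has no finite threshold in $\Theta_j(Z)$, i.e.\ $\Sources(j) = \emptyset$... wait, no — $\Theta_j$ is indexed by $\Targets(j)$, not sources. Let me reconsider: $\Theta_j(Z) = \{\theta_{ij} : i \in \Targets(j)\} \cup \{0, \infty\}$, so $\pi_j(\kappa) = (0,\infty)$ happens iff $\Targets(j) = \emptyset$ — which is exactly excluded by the hypothesis that every node has an out-edge. So every coordinate of $\kappa$ has at least one finite endpoint available to collapse to, and the chain-following never gets stuck. I would state this observation as the first step, then run the chain argument. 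A secondary bookkeeping point: I should double-check that when a coordinate $j$ is collapsed to a finite endpoint $\theta_{i_j j}$, that $i_j$ really is in $\Targets(j)$ (true by definition of $\Theta_j$), and that the only constraint for $\tau \in \LCC$ is the permutation condition on $\sd(\tau)$ (which the cycle gives us automatically) — regular directions impose no constraint. With these pieces the proof is short; the writeup is mostly making the chain-following precise and invoking the out-edge hypothesis exactly where the $(0,\infty)$ degeneracy would otherwise bite.
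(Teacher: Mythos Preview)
Your proposal is correct and follows essentially the same approach as the paper. Both arguments use the out-edge hypothesis to guarantee that every coordinate of $\kappa$ has a finite threshold endpoint, then exploit finiteness of $V$ to locate a cycle in the resulting ``target-selection'' map; the paper defines the selection $\sigma\colon V\to V$ globally and takes its eventual image $U$ (on which $\sigma$ is a permutation), whereas you trace a single chain until it closes into one cycle, but these are equivalent ways of extracting a nonempty $\rho$-invariant subset of $V$ on which to collapse.
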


\begin{proof}
    Let $\kappa$ be a regular cell. 
    Recall that for every $j$, $\pi_j(\kappa) = (\theta_{a_j^\kappa j},\theta_{b_j^\kappa j})$ where $a_j^\kappa,b_j^\kappa \in V\cup \{-\infty, \infty\}$. Since every $j$ has an out-edge and hence a target node $i_j$, there is a  threshold $\theta_{i_j j}$ so that $\theta_{i_j j}$ is a boundary of $\pi_j(\kappa)$. In particular, since $i_j$ is a network node, $i_j\notin \{\theta_{-\infty j}, \theta_{\infty j}\}$. 
    The selection of such a threshold for every $j \in V$ defines a map 
   $\sigma: V \to V$ by $\sigma(j) = i_j$.  
    Note that since $\sigma (V) \subset V$ and $V$ is finite, the set 
   \[ U:= \bigcup_{k=1}^\infty \sigma^k(V) \]
   is non-empty and satisfies $U=\sigma(U)$.
   Then a cell $\tau$ defined  by 
    \begin{align*}
        \pi_j(\tau) = \begin{cases}
          \pi_j(\kappa), \quad & j\notin U \\
          \{\theta_{\sigma(j)j}\}, \quad & j\in U
        \end{cases}
    \end{align*}
    is a loop characteristic cell with $\kappa \in \cN(\tau)$. 
\end{proof}

\subsection{Equilibrium Cells}

First, we generalize Section \ref{sec: cfs equilibria} and characterize the equilibrium cells of $\SWITCH(Z)$ in $\cN(\tau)$. We will take advantage of the decomposition \[\SWITCH(Z) = (\SWITCH^1(Z;\tau),\ldots,\SWITCH^n(Z;\tau),\SWITCH^{n+1}(Z;\tau))\] which is valid on $\cN(\tau) = \cN^1(\tau)\times \cdots \times \cN^n(\tau)\times \cN^{n+1}(\tau)$ (equation \ref{eq: cycle sys}). However, the lemmas of Section \ref{sec: cfs equilibria} are only valid for cyclic feedback systems which are defined on the whole positive orthant. We therefore extend $\SWITCH^d(Z;\tau)$ to all of $\bR_+^{\ell_d}$ for $d\leq n$. The regular directions do not form a cyclic feedback system on $\cN(\tau)$ so we do not need to extend $\SWITCH^{n+1}(Z;\tau)$.  

The dynamics of $\SWITCH^d(Z;\tau)$ are given by 
\begin{align*}
    \dot{x}^d = -\Gamma^d x^d + \Lambda^d(x^d;\tau), \quad x^d \in \cN^d(\tau).
\end{align*}
To extend the domain of definition of $\SWITCH^d(Z;\tau)$ on $\bR_+^{\ell_d}$, we need to define $\Lambda^d(\cdot;\tau)$ on $\bR_+^{\ell_d}$, while ensuring $\SWITCH^d(Z;\tau)$ remains a cyclic feedback system. To do so, we make the following definition.

\begin{definition}
 Given a loop characteristic cell $\tau \subset \bR_+^N$ with permutation $\rho = \rho^\tau$, the \emph{cone} $\sC(\kappa;\tau)$ rooted in $\tau$ and induced by a cell $\kappa\in\cN(\tau)$ is defined by its $N$ projections. For a regular direction, $r$, of $\tau$ 
 \begin{align*}
     \pi_r(\sC(\kappa;\tau))  := \pi_r(\tau). 
 \end{align*}
 For a singular direction, $s\in\sd(\tau)$, 
 \begin{align*}
     \pi_s(\sC(\kappa;\tau)) := \begin{cases}
     \{\theta_{\rho(s)s}\}, \quad & \mbox{ if } \pi_s(\kappa) = \{\theta_{\rho(s)s}\}\\
        (\theta_{\rho(s)s},\infty), \quad & \mbox{ if } \pi_s(\kappa) = (\theta_{\rho(s)s},\theta_{\rho_+(s)s}) \\
        (0,\theta_{\rho(s)s}), \quad &  \mbox{ if } \pi_s(\kappa) = (\theta_{\rho_-(s)s},\theta_{\rho(s)s}). 
     \end{cases}
 \end{align*}
 For  $\sigma \in \cN^d(\tau)$ define a $d$-cone  in $\bR_+^{\ell_d}$ by 
 \begin{align*}
     \sC^d(\sigma;\tau) := \prod_{j=s_d + 1}^{s_d +\ell_d} \pi_j(\sC(\sigma;\tau)).
 \end{align*}
\end{definition}

\begin{figure}[hbtp!]
\begin{center}
\begin{tabular}{ccc}
\begin{tikzpicture}[scale = 1.5]
  \tikzstyle{line} = [-, thick]
  \tikzstyle{arrow} = [->,line width = .4mm]
  \tikzstyle{unstable} = [red]
  \tikzstyle{stable} = [blue]
  \tikzstyle{pt} = [circle,draw=black,fill = black,minimum size = 2pt];
  \tikzstyle{cone pt} = [circle, draw = black, minimum size = 1pt, fill = blue];
  \def\epsshift{.3}
  \def\arrlen{.3}
    
  \node[pt] at (0,0) (00) [label =below: 0,label = west: 0] {};
  
  \node[pt] at (1,0) (10) [label = below:$\theta_{21}$ ] {};
  \node[pt] at (2,0) (20) [label = below:$\theta_{11}$ ] {};
  
  \node[pt] at (1,1) (11)  {};
  \node[pt] at (2,1) (21) {};
  \node[pt] at (2,2) (22) [label = 45: $\til{\tau}$] {};
  \node[pt] at (2,1) (21) {};
  \node[pt] at (1,2) (12) {};
  \node[pt] at (3,0) (30) [label = below: $\infty$] {};
  \node[pt] at (3,1) (31) {};
  \node[pt] at (3,2) (32) {};

  \node[pt] at (0,1) (01)  [label = west: $\theta_{12}$] {};
  \node[pt] at (0,2) (02)  [label = west: $\theta_{22}$] {};
  \node[pt] at (0,3) (03) [label = west: $\infty$] {};
  \node[pt] at (1,3) (13) {};
  \node[pt] at (2,3) (23) {};
  \node[pt] at (3,3) (33) {};

  \draw[line] (00) to (30);
  \draw[line] (00) to (03);
  \draw[line] (10) to (13);
  \draw[line] (20) to (23);
  \draw[line] (30) to (33);
  
  \draw[line] (01) to (31);
  \draw[line] (02) to (32);
  \draw[line] (03) to (33);
  
  \node at (2.5,2.5) {$\kappa_1$};
  \node at (1.5,2.5) {$\kappa_2$};
  \node at (1.5,1.5) {$\kappa_3$};
  \node at (2.5,1.5) {$\kappa_4$}; 
  
    \draw[arrow] (.5,1) to (.5,1-\arrlen);
    \draw[arrow] (1,.5) to (1-\arrlen,.5);
    \draw[arrow] (1.5,1) to (1.5,1+\arrlen);
    \draw[arrow] (1,1.5) to (1-\arrlen,1.5);
    \draw[arrow] (.5,2) to (.5,2-\arrlen);
    \draw[arrow] (2,.5) to (2-\arrlen,.5);
    \draw[arrow] (1.5,2) to (1.5,2-\arrlen);
    \draw[arrow] (2.5,2) to (2.5,2-\arrlen);
    \draw[arrow] (2.5,1) to (2.5,1+\arrlen);
    \draw[arrow] (1,2.5) to (1-\arrlen,2.5);
    \draw[arrow] (2,2.5) to (2-\arrlen,2.5);
    \draw[arrow] (2,2.5) to (2+\arrlen,2.5);
    \draw[arrow] (2,1.5) to (2-\arrlen,1.5);
    \draw[arrow] (2,1.5) to (2+\arrlen,1.5);
  
  \begin{pgfonlayer}{background}
    \draw[fill = gray!50!white] (1,1) rectangle (3,3);
  \end{pgfonlayer}  
\end{tikzpicture} 
 
 &
 
 \begin{tikzpicture}[scale = 1.5]
  \tikzstyle{line} = [-, thick]
  \tikzstyle{arrow} = [->,line width = .4mm]
  \tikzstyle{unstable} = [red]
  \tikzstyle{stable} = [blue]
  \tikzstyle{pt} = [circle,draw=black,fill = black,minimum size = 2pt];
  \tikzstyle{cone pt} = [circle, draw = black, minimum size = 1pt, fill = blue];
  \def\epsshift{.3}
  \def\arrlen{.3}
    
  \node[pt] at (0,0) (00) [label =below: 0,label = west: 0] {};
  
  \node[pt] at (2,0) (20) [label = below:$\theta_{11}$ ] {};
  
  \node[pt] at (2,2) (22) [label = {45: $\til{\tau}$} ] {};
  \node[pt] at (3,0) (30) [label = below: $\infty$] {};
  \node[pt] at (3,2) (32) {};
  
  \node[pt] at (0,2) (02)  [label = west: $\theta_{22}$] {};
  \node[pt] at (0,3) (03) [label = west: $\infty$] {};
  \node[pt] at (2,3) (23) {};
  \node[pt] at (3,3) (33) {};
  
  \draw[line] (00) to (30);
  \draw[line] (00) to (03);
  \draw[line] (20) to (23);
  \draw[line] (30) to (33);
  
  \draw[line] (02) to (32);
  \draw[line] (03) to (33);
  
 \node at (1,2.5) {$\sC(\kappa_2;\til{\tau})$};
 \node at (1,1) {$\sC(\kappa_3;\til{\tau})$};
 \node at (2.5,.7) {$\sC(\kappa_4;\til{\tau})$};
 \node at (2.5,2.7) {$\sC(\kappa_1;\til{\tau})$};
 
 \draw[arrow] (2,1) to (2-\arrlen,1);
 \draw[arrow] (2,1) to (2+\arrlen,1);
 \draw[arrow] (1,2) to (1,2-\arrlen);
 \draw[arrow] (2.5,2) to (2.5,2-\arrlen);
 \draw[arrow] (2,2.5) to (2-\arrlen,2.5);
 \draw[arrow] (2,2.5) to (2+\arrlen,2.5);

 \end{tikzpicture} &
 
 \begin{tikzpicture}[scale = 1.5]
  \tikzstyle{line} = [-, thick]
  \tikzstyle{arrow} = [->,line width = .6mm]
  \tikzstyle{unstable} = [red]
  \tikzstyle{stable} = [blue]
  \tikzstyle{pt} = [circle,draw=black,fill = black,minimum size = 2pt];
  \tikzstyle{cone pt} = [circle, draw = black, minimum size = 1pt, fill = blue];
  \def\epsshift{.3}
  \def\arrlen{.5}

  \node[pt] at (2,0) (20) [label = left:$0$, label = below: \vphantom{$\theta_{11}$} ] {};
  \node[pt] at (2,2) (22) [label = {east: $\til{\tau}^2$},label = {west: $\theta_{22}$}] {};
  \node[pt] at (2,3) (23) [label = west: $\infty$] {};
  
  \draw[line] (20) to node[right] {$\pi_2(\til{\tau}_2^-)$} (22);
  \draw[line] (22) to node[right] {$\pi_2(\til{\tau}_2^+)$} (23);
  
  \draw[arrow] (2,2) to (2,2-\arrlen);
  
 \end{tikzpicture} \\
 
 (a) &
 (b) &
 (c) \\
 \\
 
 \multicolumn{3}{c}{
 \begin{tikzpicture}[scale = 1.5]
  \tikzstyle{line} = [-, thick]
  \tikzstyle{arrow} = [->,line width = .6mm]
  \tikzstyle{unstable} = [red]
  \tikzstyle{stable} = [blue]
  \tikzstyle{pt} = [circle,draw=black,fill = black,minimum size = 2pt];
  \tikzstyle{cone pt} = [circle, draw = black, minimum size = 1pt, fill = blue];
  \def\epsshift{.3}
  \def\arrlen{.5}
 
  \node[pt] at (0,2) (02) [label = below:$0$] {};
  \node[pt] at (2,2) (22) [label = {below: $\theta_{11}$}] {};
  \node[pt] at (3,2) (23) [label = below: $\infty$] {};
  
  \node at (1.8,2.2) {$\til{\tau}^1$};
  
  \draw[line] (02) to node[above] {$\pi_1(\til{\tau}_1^-)$} (22);
  \draw[line] (22) to node[above] {$\pi_1(\til{\tau}_1^+)$} (23);
  
  \draw[arrow] (2,2) to (2-\arrlen,2);
  \draw[arrow] (2,2) to (2+\arrlen,2);
  
 \end{tikzpicture}
 }\\
 \multicolumn{3}{c}{(d)} 
 \end{tabular}
 
 \caption{\red{\textbf{The decomposition of the positive toggle plus system at a loop characteristic cell $\til{\tau}$.} The switching parameter is chosen as in Example \ref{ex: positive toggle}. \textbf{(a):} The cell complex $\chi$ and the labeling map. The cell neighborhood $\cN(\til{\tau})$ consists of all cells in the shaded gray region. The regular cells in the cell neighorhood are labeled. \textbf{(b):} The cell complex and labeling map formed by the product of switching systems $(\SWITCH^1(Z;\til{\tau}),\SWITCH^2(Z;\til{\tau}))$ after extending their domains. The cell complex is formed by the cones rooted in $\til{\tau}$. The cones induced by the regular cells are labeled. \textbf{(c):} The cell complex $\chi^2(\tau)$ and labeling map $\cL^2$ for $\SWITCH^2(Z;\til{\tau})$. The cell $\til{\tau}_2^-$ is a $2$-candidate equilibrium cell because $C^2(\til{\tau}_2^-;\til{\tau})$ is an equilibrium cell of $\SWITCH^2(Z;\til{\tau})$. It is also $2$-consistent.   \textbf{(d):} The cell complex $\chi^1(\tau)$ and labeling map $\cL^1$ for $\SWITCH^1(Z;\til{\tau})$. Each of $\til{\tau}$, $\til{\tau}_1^-$, and $\til{\tau}_1^+$ are candidate equilibrium cells. The cells $\til{\tau}$ and $\til{\tau}_1^+$ are $1$-consistent, while $\til{\tau}_1^-$ is not. See Example \ref{ex: candidate eq cells} for more details on \textbf{(a)} and \textbf{(b)}. }
 }\label{fig: cones}
\end{center}
   
\end{figure}

The cones for a loop characteristic cell $\tau$ in a two node network are depicted in Figure \ref{fig: cones}.

We now proceed to extend the function $\Lambda^d$ from $\cN^d(\tau)$ to $\bR_+^{\ell_d}$. Take $x^d \in \bR_+^{\ell_d}$.
If $x^d\notin \cN^d(\tau)$, then $x^d\in \sC^d(\sigma;\tau)$ for some $\sigma \in \cN^d(\tau)$. We define the value of $\Lambda^d$ on $\sC^d(\sigma;\tau)$ to be the value of $\Lambda^d$ on $\sigma$. Explicitly,  
\begin{align*}
    \Lambda^d(x^d;\tau) := \Lambda^d(\sigma), \quad x^d\in \sC^d(\sigma;\tau).   
\end{align*}

Given this extension, $\SWITCH^d(Z;\tau)$ is a switching system defined on $\bR_+^{\ell_d}$ with  an associated cell complex, labeling map, and flow direction map which we denote \red{$\chi^d(\tau)$}, $\cL^d(\cdot,\cdot,\cdot;\tau)$, and $\Phi^d(\cdot;\tau)$, respectively. Here, the threshold set \red{which generates $\chi^d(\tau)$,} $\Theta^d := (\Theta^d_{s_d + 1},\ldots,\Theta_{s_d+\ell_d})$, is defined by 
\begin{align*}
    \Theta_j^d := \begin{cases}
        \{0,\theta_{\rho(j)j},\infty\}, \quad & j\in\sd(\tau)\\
        \{\theta_{a_j^\tau j},\theta_{b_j^\tau \red{j}}\}, \quad & \mbox{otherwise}.
    \end{cases}
\end{align*}
Lemmas \ref{lem: inessential cfs} and \ref{lem: essential cfs} can be used to determine the equilibrium cells of $\SWITCH^d(Z;\tau)$. Since the cell complex is formed by the cones (see Figure \ref{fig: cones}\red{(c) and (d)}) 
\[
\chi^d(\tau) = \{\sC^d(\sigma;\tau)\;|\;  \sigma \in \cN^d(\tau)\},
\]
there is a straightforward identification between cells of $\SWITCH^d(Z;\tau)$ and cells $\sigma \in \cN^d(\tau)$.

\begin{definition}
 The \emph{$d$-candidate equilibrium cells} of $\tau$ are defined by 
 \begin{align*}
     \Eq^d(\tau) := \{\sigma^d\;|\; \sigma \in \cN(\tau) \mbox{ and } \sC^d(\sigma;\tau) \mbox{ is an equilibrium cell of } \SWITCH^d(Z;\tau)\}. 
 \end{align*}
 
 The \emph{candidate equilibrium cells} of $\tau$ are $\Eq(\tau) := \prod_{d=1}^{n+1} \Eq^d(\tau). $
\end{definition}

\red{See Figure \ref{fig: cones} and Example \ref{ex: candidate eq cells} for examples of candidate equilibrium cells in the positive toggle plus system.} The next lemma justifies the name candidate equilibrium cell. 

\begin{lemma}\label{lem: Eq}
 If $\kappa\in \cN(\tau)$ is an equilibrium cell, then $\kappa \in \Eq(\tau)$. 
\end{lemma}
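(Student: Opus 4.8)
## Proof proposal for Lemma \ref{lem: Eq}

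The plan is to show that if $\kappa \in \cN(\tau)$ is an equilibrium cell of $\SWITCH(Z)$ — equivalently, by Theorem \ref{thm: FP}, a loop characteristic cell on which $\Phi_j(\kappa) = 0$ for all $j$ — then each projection $\kappa^d$ lies in $\Eq^d(\tau)$, i.e. the corresponding cone $\sC^d(\kappa; \tau)$ is an equilibrium cell of the extended cyclic feedback system $\SWITCH^d(Z;\tau)$. The key reduction is the local product decomposition established in Section \ref{sec: decomposition}: on $\cN(\tau)$ the system $\SWITCH(Z)$ splits as $(\SWITCH^1(Z;\tau),\dots,\SWITCH^{n+1}(Z;\tau))$, where the last factor is a diagonal (uncoupled one-dimensional) system in the regular directions. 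Since equilibria, loop-characteristic-ness, and the vanishing of the flow direction map all respect products, it suffices to verify the claim factor by factor.

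First I would treat the regular-direction factor $d = n+1$. Here $\Eq^{n+1}(\tau)$ should just be all of $\cN^{n+1}(\tau)$ (or the trivial statement), because each regular direction $r$ of $\tau$ satisfies $\pi_r(\kappa) = \pi_r(\tau)$ for every $\kappa \in \cN(\tau)$, so there is nothing to check — the projection $\kappa^{n+1}$ is forced. Then for each cyclic factor $d \le n$, I would argue as follows. Because $\kappa$ is an equilibrium cell, $\Phi_j(\kappa) = 0$ for every singular direction $j$ appearing in cycle $c_d$; by Lemma \ref{lem: top cells}, the $j$-th (resp. $\rho(j)$-th) component of the vector field on cells of $\cN(\tau)$ depends only on the variables within the $d$-th block, so the labeling map $\cL^d$ of $\SWITCH^d(Z;\tau)$ at $\sC^d(\kappa;\tau)$ agrees with the restriction of $\cL$ at $\kappa$ to those directions. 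Hence $\Phi^d_j(\sC^d(\kappa;\tau)) = \Phi_j(\kappa) = 0$ for all $j$ in block $d$. It remains to check that $\sC^d(\kappa;\tau)$ is a loop characteristic cell of $\SWITCH^d(Z;\tau)$; this is automatic from the construction of the cone, since the singular directions of $\sC^d(\kappa;\tau)$ are exactly those singular directions $s$ of $\tau$ (inside block $d$) for which $\pi_s(\kappa) = \{\theta_{\rho(s)s}\}$, and on those directions the permutation $\rho^{\tau}$ restricted to block $d$ — which is the cycle $c_d$ — restricts to a permutation of that singular-direction subset (a sub-union of cycles of $c_d$, hence still built from cycles). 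Applying Theorem \ref{thm: FP} to the system $\SWITCH^d(Z;\tau)$ then shows $\sC^d(\kappa;\tau)$ is an equilibrium cell, i.e. $\kappa^d \in \Eq^d(\tau)$. Taking the product over $d$ gives $\kappa \in \Eq(\tau)$.

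The main obstacle I anticipate is the bookkeeping needed to identify the labeling map $\cL^d$ of the extended system $\SWITCH^d(Z;\tau)$ with the ambient labeling map $\cL$ of $\SWITCH(Z)$ at cells of $\cN(\tau)$ — in particular making sure that the extension of $\Lambda^d$ by constants on the cones does not alter the relevant signs $\sgn(-\gamma_j \theta_{\cdot j} + \Lambda^{(d)}_j(\cdot))$ on precisely the cells that matter (the neighbors $\sC^d(\kappa;\tau)^{\pm}$ used in Definition \ref{defn: flow}). One must check that for a singular direction $s$ of $\kappa^d$, the two neighboring cones $(\sC^d(\kappa;\tau))_s^{\pm}$ in $\chi^d(\tau)$ correspond to cells of $\cN(\tau)$ on which $\Lambda$ already had the claimed constant values (via Lemma \ref{lem: top cells}), and for a direction $s$ that is singular in $\tau$ but regular in $\kappa$, that the cone extension of $\Lambda^d$ reproduces the value $\Lambda_{\rho(s)}(\kappa)$ used to compute $\cL(\kappa,s,\pm)$. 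Once this identification of labeling maps is in place, the rest is a formal consequence of Theorem \ref{thm: FP} applied componentwise together with the fact that the cone construction preserves the loop-characteristic property.
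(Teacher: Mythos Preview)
Your overall architecture matches the paper's: reduce via Theorem~\ref{thm: FP} to showing $\Phi^d_j(\sC^d(\kappa;\tau))=0$ for every $j$ in block $d$, split into the cases ``$j$ singular in $\kappa$'' versus ``$j$ singular in $\tau$ but regular in $\kappa$'', and handle the first case by identifying $\cL^d$ with $\cL$ (this is exactly the paper's Lemma~\ref{lem: Phi and Phid equal}). Your discussion of the loop-characteristic property of $\sC^d(\kappa;\tau)$ is also fine in conclusion, though note that since $c_d$ is a \emph{single} cycle, the intersection $\sd(\kappa)\cap\{s_d+1,\dots,s_d+\ell_d\}$ is forced to be either empty or the whole block---there are no proper ``sub-unions of cycles of $c_d$''.

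There is, however, a genuine gap in how you plan to handle the second case (directions $j$ singular in $\tau$ but regular in $\kappa$). You frame the obstacle as checking that ``the cone extension of $\Lambda^d$ reproduces the value $\Lambda_{\rho(s)}(\kappa)$''---but first, for a regular direction of $\kappa$ the relevant quantity is $\Lambda_j(\kappa)$, not $\Lambda_{\rho(j)}(\kappa)$; and second, the $\Lambda$-values \emph{do} match by construction of the cone extension, so that is not the issue. The real issue is that the \emph{thresholds} used in the two labeling maps differ: on the ``$-$'' side, $\cL(\kappa,j,-)$ uses $\theta_{\rho_-(j)j}$ while $\cL^d(\sC^d(\kappa;\tau),j,-)$ uses $0$ (since $\Theta^d_j=\{0,\theta_{\rho(j)j},\infty\}$). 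So the entry-by-entry identification of labeling maps you propose fails here. The paper's fix is to observe that both quantities are nevertheless equal to $1$, for different reasons: $\cL^d(\sC^d(\kappa;\tau),j,-)=\sgn(-\gamma_j\cdot 0 + \Lambda^d_j)=1$ automatically, and $\cL(\kappa,j,-)=1$ is forced by $\Phi_j(\kappa)=0$ together with $\theta_{a_j j}<\theta_{b_j j}$. On the ``$+$'' side the thresholds do coincide (both equal $\theta_{\rho(j)j}$), so there the identification works directly. With this correction your argument goes through.

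A smaller point: your claim that the $d=n+1$ factor is trivial because $\kappa^{n+1}$ is forced overlooks that one still needs $\Phi^{n+1}_r(\sC^{n+1}(\kappa;\tau))=0$ for each regular direction $r$ of $\tau$; this does follow, but from $\Phi_r(\kappa)=0$ via the same labeling-map identification (Lemma~\ref{lem: Phi and Phid equal}), not just from the projection being determined.
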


Before proving the lemma, we isolate part of the proof that will be useful later. 

\begin{lemma}\label{lem: Phi and Phid equal}
    Let $\kappa\in\cN(\tau)$. If $r$ is a regular direction of $\tau$ then $\Phi_r(\kappa) = \Phi_r^{n+1}(C(\kappa;\tau);\tau)$.  For any singular direction $s$  of $\kappa$ that belongs to  $ \{s_d+1,\ldots,s_d+\ell_d\}$ we have $\Phi_s(\kappa) = \Phi_s^d(C(\kappa;\tau);\tau)$.  
\end{lemma}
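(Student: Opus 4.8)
\textbf{Proof plan for Lemma \ref{lem: Phi and Phid equal}.}

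The plan is to unwind the definitions of $\Phi$, $\cL$, $\Phi^d$, and $\cL^d$ and check that corresponding terms match, using Lemma \ref{lem: top cells} to control the values of $\Lambda$ on $\cN(\tau)$ and the fact that the extended system $\SWITCH^d(Z;\tau)$ agrees with $\SWITCH(Z)$ on the cells of $\cN^d(\tau)$ (not on the cones, but the relevant cells lie in $\cN^d(\tau)$). Since $\Phi_j$ is determined by the pair $\cL(\kappa,j,-)$, $\cL(\kappa,j,+)$ and likewise $\Phi^d_j$ by $\cL^d(C(\kappa;\tau),j,-),\cL^d(C(\kappa;\tau),j,+)$, it suffices to prove the matching statement at the level of the labeling maps: for $r$ a regular direction of $\tau$, $\cL(\kappa,r,\beta)=\cL^{n+1}(C(\kappa;\tau),r,\beta;\tau)$ for $\beta\in\{-,+\}$, and for a singular direction $s$ of $\kappa$ lying in block $d$, $\cL(\kappa,s,\beta)=\cL^d(C(\kappa;\tau),s,\beta;\tau)$.

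First I would handle the regular directions $r$ of $\tau$. Since $\kappa\in\cN(\tau)$ and $r$ is regular for $\tau$, we have $\pi_r(\kappa)=\pi_r(\tau)=(\theta_{a_r^\tau r},\theta_{b_r^\tau r})$, so $r$ is also a regular direction of $\kappa$ and the first branch of Definition \ref{defn: flow} applies to both $\cL(\kappa,r,\beta)$ and $\cL^{n+1}(C(\kappa;\tau),r,\beta;\tau)$. By construction $\Theta^{n+1}_r=\{\theta_{a_r^\tau r},\theta_{b_r^\tau r}\}$, so the threshold appearing in each case is the same, and $\gamma_r$ is unchanged. It remains to see $\Lambda_r(\kappa)=\Lambda^{n+1}_r(C(\kappa;\tau);\tau)$; by Lemma \ref{lem: top cells}, since $r$ is regular for $\tau$ (hence $r\notin\{\rho^\tau(j)\mid j\in\sd(\tau)\}$ as $\rho^\tau$ permutes $\sd(\tau)$), $\Lambda_r$ is constant equal to $\Lambda_r(\tau)$ on all of $\cN(\tau)$, and the extension was defined precisely to preserve this value on the corresponding cone. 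So the two signs agree, giving $\Phi_r(\kappa)=\Phi_r^{n+1}(C(\kappa;\tau);\tau)$.

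Next I would handle a singular direction $s$ of $\kappa$ in block $d$. Here there are two subcases depending on whether $s$ is singular or regular for $\tau$, and in each I must check which branch of Definition \ref{defn: flow} is used on each side and that the data match. If $s\in\sd(\tau)$ then $\rho^\tau(s)$ is again in block $d$ (cycles are preserved by the block decomposition), the $j$-neighbors $\kappa_s^\pm$ within $\cN(\tau)$ correspond under $C(\cdot;\tau)$ to the $s$-neighbors in $\chi^d(\tau)$, and Lemma \ref{lem: top cells} guarantees $\Lambda_{\rho(s)}$ is well-defined and correctly valued on $\kappa_s^\pm$; here I would also invoke Lemma \ref{lem: sd labels} for well-definedness. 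If $s\notin\sd(\tau)$, then $\pi_s(\kappa)$ was an interval with $\theta_{a_s^\tau s}$ or $\theta_{b_s^\tau s}$ as an endpoint and $s$ became singular in $\kappa$; in the extended system $\SWITCH^d(Z;\tau)$, $s$ is a regular direction of $\tau$-block only if it was regular for $\tau$, so actually this subcase cannot occur for $s$ in a CFS block $d\le n$ — I should double-check the block bookkeeping: the singular directions of $\kappa$ that lie in blocks $1,\dots,n$ are exactly the singular directions of $\tau$ (blocks $1,\dots,n$ are built from $\sd(\tau)$), so automatically $s\in\sd(\tau)$ and only the first subcase arises. Thus the remaining work is to verify the neighbor correspondence $C(\kappa_s^\pm;\tau)=C(\kappa;\tau)_s^\pm$ and the threshold/rate bookkeeping $\theta_{\rho^2(s)\rho(s)}$, $\gamma_{\rho(s)}$ matching between the two systems, which follows from the definition of $\Theta^d$ and the fact that $\SWITCH^d$ inherits its edges and parameters from $\SWITCH(Z)$.

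The main obstacle I anticipate is the careful bookkeeping in the singular case: reconciling the definition of the neighbor cells $\tau_j^\pm$ (Definition \ref{defn: neighbors}) in the ambient complex $\chi$ with the neighbor cells in the smaller complex $\chi^d(\tau)$ built out of cones, and confirming that passing to the cone $C(\cdot;\tau)$ intertwines the two neighbor operations and carries $\Lambda_{\rho(s)}$-values correctly — in particular that the cone extension of $\Lambda^d$ does not interfere because the relevant neighbor cells still lie in $\cN^d(\tau)$ rather than strictly inside a cone. Everything else is a matter of substituting equal quantities into the case definitions of $\cL$ and $\Phi$.
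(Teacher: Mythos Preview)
Your proposal is correct and follows essentially the same approach as the paper: reduce to showing the labeling maps agree, use Lemma~\ref{lem: top cells} and $\pi_r(\kappa)=\pi_r(\tau)$ for the regular case, and verify $\Lambda_{\rho(s)}(\kappa_s^\pm)=\Lambda^d_{\rho(s)}(C(\kappa_s^\pm;\tau);\tau)$ for the singular case. Your observation that a singular direction of $\kappa$ in a block $d\le n$ is automatically in $\sd(\tau)$ (so only one subcase arises), and your explicit check of the neighbor correspondence $C(\kappa_s^\pm;\tau)=C(\kappa;\tau)_s^\pm$, are points the paper leaves implicit; otherwise the arguments coincide.
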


\begin{proof}
    First consider the case that $r$ is a regular direction of $\tau$. Then $r$ is also a regular direction of $\kappa$ and $\pi_r(\kappa) = \pi_r(\tau) = (\theta_{a_r^\tau,r}, \theta_{b_r^\tau r})$. Further, by Lemma \ref{lem: top cells}, $\Lambda_r(\tau) = \Lambda_r(\kappa)$. Finally we have $\Lambda_r^{n+1}(C(\kappa;\tau);\tau) = \Lambda_r(\kappa) = \Lambda_r(\tau)$. These observations allow us to compute the labeling map of $\kappa$ in the $r$-th direction. We have
    \[
    \cL^{n+1}(C(\kappa;\tau),r,-) = \sgn(-\gamma_r \theta_{a_r^\tau r} + \Lambda_r^{n+1}(C(\kappa;\tau);\tau)) = \sgn(-\gamma_r\theta_{a_r^\tau r} + \Lambda_r(\tau)) = \cL(\tau,r,-)
    \]
    and 
    \[
    \cL^{n+1}(C(\kappa;\tau),r,+) = \sgn(-\gamma_r \theta_{b_r^\tau r} + \Lambda_r^{n+1}(C(\kappa;\tau);\tau)) = \sgn(-\gamma_r\theta_{b_r^\tau r} + \Lambda_r(\tau)) = \cL(\tau,r,+).
    \]
    Since the labeling maps agree and the flow direction maps are defined via the labeling maps (see Definition \ref{defn: flow}), we have $\Phi_r(\tau) = \Phi_r^{n+1}(C(\kappa;\tau);\tau)$.

    Now suppose $s \in  \{s_d+1,\ldots,s_d+\ell_d\}$ is a singular direction of $\kappa$ for some $d=1, \ldots, n$. Then 
\[
    \cL(\kappa,s,-) = \sgn(-\gamma_{\rho(s)}\theta_{\rho^2(s)\rho(s)} + \Lambda_{\rho(s)}(\kappa_s^-)) \quad \mbox{and} \quad  \cL(\kappa,s,+) = \sgn(-\gamma_{\rho(s)}\theta_{\rho^2(s)\rho(s)} + \Lambda_{\rho(s)}(\kappa_s^+)).
\]
Since $\Lambda_{\rho(s)}(\kappa_s^\pm) = \Lambda^d_{\rho(s)}(C(\kappa_s^{\pm};\tau);\tau)$, we have $\cL^d(C(\kappa;\tau),s,-) = \cL^d(C(\kappa;\tau),s,-)$ and $\cL(\kappa,s,+) = -\cL^d(C(\kappa;\tau),s,+)$, or $\Phi_s(\kappa) = \Phi_s^d(C(\kappa;\tau);\tau)$. 
\end{proof}

\begin{proof}[Proof of Lemma \ref{lem: Eq}]
Let $\kappa\in \cN(\tau)$ be an equilibrium cell. By Theorem \ref{thm: FP} we have $\Phi_j(\kappa) = 0$ for every $j\in V$. By Lemma \ref{lem: Phi and Phid equal}, we have
\[\Phi_r^{n+1}(C(\kappa;\tau);\tau) = \Phi_r(\kappa) =0\qquad \mbox{and} \qquad \Phi_s^d(C(\kappa;\tau);\tau) = \Phi_s(\kappa) = 0\]
 both for regular directions $r$ of $\tau$ and singular directions $s$ of $\kappa$ for some $d=1, \ldots, n$.  This implies $\kappa^{n+1}\in\Eq^{n+1}(\tau)$ and $\kappa^d\in\Eq^d(\tau)$ where $s\in\{s_d+1,\ldots,s_d+\ell_d\}$. 

Suppose $j \in \{s_d+1,\ldots,s_d+\ell_d\}$ is a singular direction of $\tau$ but a regular direction of $\kappa$. Assume without loss of generality that $\pi_{\rho(j)}(\kappa) = (\theta_{\rho_-(j)j},\theta_{\rho(j)j})$. Then we have 
\[
1 = \cL(\kappa,j,-) = \sgn(-\gamma_{j}\theta_{\rho_-(j)j} + \Lambda_{j}(\kappa)) = -\sgn(-\gamma_{j}\theta_{\rho(j)j} + \Lambda_{j}(\kappa)) = -\cL(\kappa,j,+). 
\]
Now, $\cL^d(C(\kappa;\tau),j,+) = \cL(\kappa,j,+)$  so we only need to check $\cL(\kappa,j,-) = 1 = \cL^d(\kappa^d,j,-)$. Since $\Theta_j^d = \{0,\theta_{\rho(j)j},\infty\}$, we have \[\cL^d(C(\kappa;\tau),\rho(j),-) = \sgn(-\gamma_{\rho(j)}\cdot 0 + \Lambda_{\rho(j)}(\kappa^d)) = 1\]  so that $\Phi^d(\kappa^d;\tau) = 0$ and $\kappa^d\in \Eq^d(\tau)$. Since $\kappa^d\in \Eq^d(\tau)$ for each $d$, $\kappa \in \Eq(\tau)$.  
\end{proof}

Once we have identified a candidate equilibrium cell $\kappa\in\Eq(\tau)$, there is one more property to check to ascertain that it is an actual equilibrium cell. Since $\kappa^d\in \Eq^d(\tau)$, the cell  $\sC^d(\kappa^d;\tau)$ must be  an equilibrium cell of $\SWITCH^d(Z;\tau)$. However, $\sC^d(\kappa^d;\tau)$ is a super set of $\kappa^d$, so this equilibrium may not be contained in $\kappa^d$. 

\begin{definition}
 Let $\sigma \in \Eq^d(\tau)$ and $d\leq n$. If the equilibrium of $\SWITCH^d(Z;\tau)$ in $\sC^d(\sigma;\tau)$ is contained in $\sigma$, then we say $\sigma$ is a \emph{$d$-consistent candidate equilibrium cell}. 
 \end{definition}

The next proposition shows how to check if $\sigma$ is a $d$-consistent candidate equilibrium cell. Notice that the condition of the proposition is vacuously satisfied when $\sigma$ is a singular equilibrium cell of $\SWITCH^d(Z;\tau)$. In other words, if the singular loop characteristic cell $\tau^d$ is an equilibrium cell, it is always $d$-consistent.  It is also important to note, that while seemingly  complicated, this condition is readily algorithmically checkable.

\begin{proposition}\label{prop: d-consistent}
 Let $d\leq n$ and $\sigma \in \Eq^d(\tau)$. Then $\sigma$ is a $d$-consistent candidate equilibrium cell if, and only if, for each $j\in\{s_d+1,\ldots,s_d+\ell_d\}$, 
if $\pi_j(\sigma)$ is equal to the $i$-th value in first column, and $\pi_{j-1}(\sigma)$ is equal to the $k$-th value in first row, then the inequality in entry $(i,k)$ of the following table is satisfied.

 \begin{center}
 \begin{tabular}{c| @{\hspace{.2in}} c @{\hspace{.2in}} c} 
  \diagbox{$\pi_j(\sigma)$}{$\pi_{j-1}(\sigma)$}  & $(\theta_{\rho_-(j-1)(j-1)},\theta_{j(j-1)})$ & $(\theta_{j(j-1)},\theta_{\rho_+(j-1)(j-1)})$\\
  \hline \\
 $(\theta_{\rho_-(j)j},\theta_{(j+1)j})$ & $\gamma_j\theta_{\rho_-(j)j}<\Lambda_j(\tau_{j-1}^-)$ & $\gamma_j\theta_{\rho_-(j)j}<\Lambda_j(\tau_{j-1}^+)$ \\ \\
 $(\theta_{(j+1)j},\theta_{\rho_+(j)j})$ & $\Lambda_j(\tau_{j-1}^-) < \gamma_j\theta_{\rho_+(j)j}$ & $\Lambda_j(\tau_{j-1}^+)<\gamma_j\theta_{\rho_+(j)j}$
 \end{tabular}
 \end{center}
 
 Here  $s_d+\ell_d+1$ is identified with $s_d+1$ and $(s_d+1)-1$ is identified with $s_d+\ell_d$. 
\end{proposition}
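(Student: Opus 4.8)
The plan is to reduce the statement to a direct analysis of the one-dimensional slices of the extended cyclic feedback system $\SWITCH^d(Z;\tau)$. Fix $d\le n$ and $\sigma\in\Eq^d(\tau)$. By definition, $\sC^d(\sigma;\tau)$ is an equilibrium cell of $\SWITCH^d(Z;\tau)$, so by Theorem~\ref{thm: FP} it is a loop characteristic cell with vanishing flow direction map. Recall that $\SWITCH^d(Z;\tau)$ is a CFS on $\bR_+^{\ell_d}$ whose $j$-th equation is $\dot x_j=-\gamma_j x_j+\Lambda_j(x_{j-1};\tau)$ (indices mod $\ell_d$, shifted by $s_d$), and that $\Lambda_j$ is piecewise constant in $x_{j-1}$ with the single threshold $\theta_{j(j-1)}$ on $\sC^d(\sigma;\tau)$. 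I would first note that the equilibrium $x^\ast$ of $\SWITCH^d(Z;\tau)$ guaranteed by Theorem~\ref{thm: FP} inside $\sC^d(\sigma;\tau)$ has coordinates $x_j^\ast=\gamma_j^{-1}\Lambda_j(x^\ast_{j-1};\tau)$, so that each $x_j^\ast$ equals $\gamma_j^{-1}$ times one of the two possible values $\Lambda_j(\tau_{j-1}^-)$ or $\Lambda_j(\tau_{j-1}^+)$, depending on which side of $\theta_{j(j-1)}$ the coordinate $x_{j-1}^\ast$ lies — equivalently, depending on which of the two subcases in the first row of the table $\pi_{j-1}(\sigma)$ falls into.

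The core of the argument is then purely combinatorial/one-dimensional: $\sigma$ is $d$-consistent, i.e. $x^\ast\in\sigma$, if and only if for every $j$ the coordinate $x_j^\ast$ lies in the interval $\pi_j(\sigma)$. Since $\sigma\in\Eq^d(\tau)$ and $\sigma\subset\sC^d(\sigma;\tau)$, the interval $\pi_j(\sigma)$ is one of the pieces into which $\pi_j(\sC^d(\sigma;\tau))$ is subdivided by the thresholds of $\SWITCH^d(Z;\tau)$ in direction $j$; the relevant neighboring thresholds of $\theta_{(j+1)j}$ are $\theta_{\rho_-(j)j}$ and $\theta_{\rho_+(j)j}$, giving exactly the two rows of the table. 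So I would split into the two cases for $\pi_j(\sigma)$ (the $i=1$ row, $\pi_j(\sigma)=(\theta_{\rho_-(j)j},\theta_{(j+1)j})$, and the $i=2$ row, $\pi_j(\sigma)=(\theta_{(j+1)j},\theta_{\rho_+(j)j})$). In the first case, $x_j^\ast<\theta_{(j+1)j}$ holds automatically because the flow direction map vanishes on $\sC^d(\sigma;\tau)$ and the wall-labeling forces the focal value to lie below $\theta_{(j+1)j}$ (this is precisely the content of the equilibrium-cell characterization applied to $\sC^d(\sigma;\tau)$); what remains to be checked is the lower bound $x_j^\ast>\theta_{\rho_-(j)j}$, i.e. $\gamma_j\theta_{\rho_-(j)j}<\Lambda_j(x^\ast_{j-1};\tau)$, and substituting the two possible values of $\Lambda_j$ according to the value of $\pi_{j-1}(\sigma)$ yields exactly the two entries in the first row of the table. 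The second case is symmetric: the upper bound $x_j^\ast<\theta_{\rho_+(j)j}$ is the nontrivial one and produces the second row. Conversely, if all the tabulated inequalities hold, then $x_j^\ast\in\pi_j(\sigma)$ for every $j$, so $x^\ast\in\sigma$ and $\sigma$ is $d$-consistent.

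Finally I would record the remark in the statement: when $\sigma$ is the fully singular cell $\tau^d$ (so $\SWITCH^d(Z;\tau)$ has $\tau^d$ as a singular equilibrium cell), there is no coordinate $j$ with $\pi_j(\sigma)$ an interval of the stated form — every direction of $\tau^d$ is singular — so the condition is vacuous and $\tau^d$ is automatically $d$-consistent; this also follows directly because the SSP of $\SWITCH^d(Z;\tau)$ associated to $\tau^d$ lies on all the relevant thresholds $x_j=\theta_{(j+1)j}$, hence in $\tau^d$ itself.

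The main obstacle I anticipate is bookkeeping rather than conceptual: carefully justifying that on $\sC^d(\sigma;\tau)$ the function $\Lambda_j$ depends on $x_{j-1}$ through the single threshold $\theta_{j(j-1)}$ and takes the values $\Lambda_j(\tau_{j-1}^\mp)$ on the two sides — i.e. that the cone extension does not introduce spurious threshold crossings in direction $j-1$ — and then matching the orientation conventions (which of $\rho_-,\rho_+$ corresponds to "below"/"above", and the identification $s_d+\ell_d+1\equiv s_d+1$) so that the four table entries come out with the inequalities in the stated direction. The "automatic" upper/lower bounds coming from $\Phi^d(\sC^d(\sigma;\tau);\tau)=0$ should be extracted cleanly from Definition~\ref{defn: flow} and Theorem~\ref{thm: FP} applied to the subsystem, and I would state that as a short preliminary claim before doing the case split.
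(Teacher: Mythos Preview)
Your proposal is correct and follows essentially the same route as the paper's proof: both recognize that for each $j$ one of the two boundary inequalities ($x_j^\ast$ versus $\theta_{(j+1)j}$) is automatic from $\sigma\in\Eq^d(\tau)$, and the remaining inequality at $\theta_{\rho_\pm(j)j}$ yields the table entries once $\Lambda_j(\sigma)$ is identified with $\Lambda_j(\tau_{j-1}^\mp)$ or $\Lambda_j(\tau_{j-1}^+)$ according to $\pi_{j-1}(\sigma)$. The only cosmetic difference is that you phrase the check as ``the focal point $x_j^\ast=\gamma_j^{-1}\Lambda_j(\sigma)$ lies in $\pi_j(\sigma)$'' whereas the paper phrases it as ``$\Phi^d_j(\sigma)=0$'' and invokes Theorem~\ref{thm: FP}; these are equivalent for regular cells, and your focal-point formulation is arguably the more transparent of the two.
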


\begin{proof}

 Let $\Phi^d$ denote the flow direction map for the cyclic feedback system $\SWITCH^d(Z;\tau)$. We show that $\Phi^d(\sigma) = 0$ if and only if the conditions are satisfied. Applying Theorem \ref{thm: FP} then completes the proof. 
 
  We first assume that 
  $\sigma$ is the singular loop characteristic cell of $\SWITCH^d(Z,\tau)$. Then $\sigma = \sC^d(\sigma;\tau)$. Since $\sigma \in \Eq^d(\tau)$, $\sC^d(\sigma;\tau)$ is an equilibrium cell of $\SWITCH^d(Z;\tau)$ so that 
 \[
    \Phi^d(\sigma) = \Phi^d(\sC^d(\sigma;\tau))=0.
 \]

 If $\sigma$ is not the singular loop characteristic cell then $\sigma$ must be  a regular cell of $\SWITCH^d(Z;\tau)$ by Lemma \ref{lem: inessential cfs} and \ref{lem: essential cfs}. Since $\sigma\in \cN(\tau)$, 
 \[
 \pi_j(\sigma) = (\theta_{(j+1)j}, \theta_{\rho_+(j)j}) \quad  \mbox{ or } \quad  \pi_j(\sigma) = (\theta_{\rho_-(j)j},\theta_{(j+1)j})
 \]
 for each $j$. 
 
 Fix $j$ and suppose $\sigma$ satisfies  $\pi_{j}(\sigma) = (\theta_{(j+1)j}, \theta_{\rho_+(j)j})$.
  Since $\sigma\in \Eq^d(\tau)$, we have $\Phi_j^d(\sC(\sigma;\tau)) = 0$. Since $\pi_j(\sigma)$ and $\pi_j(\sC(\sigma;\tau))$ have the same left boundary  (by the assumption on $\sigma$), we have 
 \[
   1 = \cL^d(\sC(\sigma;\tau),j,-;0) = \cL^d(\sigma,j,-;0). 
 \]
 Therefore, $\Phi^d_j(\sigma) = 0$  if and only if $\cL^d(\sigma,j,+;0) = -1$. This is equivalent to
 \begin{align}\label{eq: Lambda(sigma) ineq}
     \sgn(-\gamma_j\theta_{\rho_+(j)j} + \Lambda_j(\sigma)) = -1, \quad \mbox{ or } \quad 
     \Lambda_j(\sigma) < \gamma_j\theta_{\rho_+(j)j}.
 \end{align}
 If $\pi_{j-1}(\sigma) = (\theta_{\rho_-(j-1)(j-1)},\theta_{j(j-1)})$ then $\sigma \in \cN(\tau_{j-1}^-)$ and if $\pi_{j-1}(\sigma) = (\theta_{j(j-1)},\theta_{\rho_+(j-1)(j-1)})$ then $\sigma \in \cN(\tau_{j-1}^+)$. 
 Notice that $j-1$ is the only singular direction of $\tau$ that maps to $j$ under $\rho$ and $j-1$ is not a singular direction of $\tau_{j-1}^-$ or $\tau_{j-1}^+$. Therefore, by Lemma \ref{lem: top cells}, $\Lambda_j(\sigma) = \Lambda_j(\tau_{j-1}^-)$ or $\Lambda_j(\sigma) = \Lambda_j(\tau_{j-1}^+)$ according to value of $\pi_{j-1}(\sigma)$. Together with (\ref{eq: Lambda(sigma) ineq}), this observation proves the proposition in the case $\pi_j(\sigma) = (\theta_{(j+1)j}, \theta_{\rho_+(j)j})$. A similar argument applies to the case $\pi_j(\sigma) = (\theta_{\rho_-(j)j},\theta_{(j+1)j)})$.  
\end{proof}

\red{See Example \ref{ex: candidate eq cells} for an example application of Proposition \ref{prop: d-consistent}.} The next theorem shows that 
any equilibrium cell in $\cN(\tau)$ can be written as a product over consistent candidate equilibrium cells. 
\begin{theorem}\label{thm:decomp}
    Let $\kappa\in\cN(\tau)$. Then  $\kappa$ is an equilibrium cell of $\SWITCH(Z)$ if and only if
    \begin{itemize}
        \item $\kappa\in\Eq(\tau)$, and
\item for every $d$, $\kappa^d$ is a $d$-consistent candidate equilibrium cell.
    \end{itemize}
\end{theorem}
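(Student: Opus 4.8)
The proof is organized around Theorem \ref{thm: FP} applied to $\SWITCH(Z)$: the cell $\kappa$ is an equilibrium cell precisely when $\kappa\in\LCC$ and $\Phi_j(\kappa)=0$ for every $j\in V$. The plan is to translate each of these two conditions into statements about the subsystems $\SWITCH^d(Z;\tau)$ using Lemma \ref{lem: Eq}, Lemma \ref{lem: Phi and Phid equal}, Lemma \ref{lem: top cells}, the classification of CFS equilibrium cells in Lemmas \ref{lem: inessential cfs}--\ref{lem: essential cfs}, and Proposition \ref{prop: d-consistent}. I would prove the two implications separately, but first handle the loop-characteristic condition once and for all.

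For any $\kappa\in\cN(\tau)$ a regular direction of $\tau$ keeps its open interval in $\kappa$, so $\sd(\kappa)\subseteq\sd(\tau)$, and a direction singular in $\kappa$ must sit at the threshold $\theta_{\rho^\tau(\cdot)\,\cdot}$, so $\rho^\kappa=\rho^\tau|_{\sd(\kappa)}$. Hence $\kappa\in\LCC$ exactly when $\sd(\kappa)$ is $\rho^\tau$-invariant, i.e. when for each cycle $d\le n$ of $\rho^\tau$ either all of its directions are singular in $\kappa$ or none is. Now if $\kappa\in\Eq(\tau)$, then each $\sC^d(\kappa^d;\tau)$ is an equilibrium cell of the cyclic feedback system $\SWITCH^d(Z;\tau)$; by Lemmas \ref{lem: inessential cfs}--\ref{lem: essential cfs} such a cell is either fully regular or equal to the fully singular cell $\tau^d$, and since forming a cone preserves which directions are singular, the ``all or none'' dichotomy holds, so $\kappa\in\LCC$. (If $\kappa$ is already an equilibrium cell it is of course in $\LCC$.)

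For the forward implication, assume $\kappa$ is an equilibrium cell of $\SWITCH(Z)$. Theorem \ref{thm: FP} gives $\Phi_j(\kappa)=0$ for all $j$ and Lemma \ref{lem: Eq} gives $\kappa\in\Eq(\tau)$, so it remains to verify $d$-consistency of each $\kappa^d$ with $d\le n$. By the dichotomy, either $\kappa^d$ is fully singular (equal to $\tau^d$), in which case $d$-consistency is vacuous by the remark following Proposition \ref{prop: d-consistent}, or $\kappa^d$ is fully regular. In the latter case fix a direction $j$ of the $d$th cycle; then $\pi_j(\kappa)$ is one of the two intervals of $\cN(\tau)$ having $\theta_{(j+1)j}$ as an endpoint, $\Lambda_j(\kappa)=\sigma_{j(j-1)}(\kappa)$ is well defined, and since $j-1$ is a regular direction of $\tau^{\pm}_{j-1}$, Lemma \ref{lem: top cells} applied at $\tau^{\pm}_{j-1}$ identifies $\Lambda_j(\kappa)$ with $\Lambda_j(\tau^-_{j-1})$ or $\Lambda_j(\tau^+_{j-1})$ according to the side of $\theta_{j(j-1)}$ on which $\pi_{j-1}(\kappa)$ lies. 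The labels $\cL(\kappa,j,-)$ and $\cL(\kappa,j,+)$ are the signs of $-\gamma_j\theta+\Lambda_j(\kappa)$ at the two (ordered) endpoints $\theta$ of $\pi_j(\kappa)$; $\Phi_j(\kappa)=0$ forces them to have opposite signs, so, the endpoints being ordered, the sign at the endpoint different from $\theta_{(j+1)j}$ is determined, and it is exactly the inequality recorded in the appropriate entry of the table of Proposition \ref{prop: d-consistent}. Hence $\kappa^d$ is $d$-consistent.

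For the converse, assume $\kappa\in\Eq(\tau)$ and each $\kappa^d$ is $d$-consistent; by the second paragraph $\kappa\in\LCC$, so by Theorem \ref{thm: FP} it suffices to prove $\Phi_j(\kappa)=0$ for every $j$. If $j$ is a regular direction of $\tau$, then $\sC^{n+1}(\kappa^{n+1};\tau)=\kappa^{n+1}$ (since $\pi_r(\sC(\kappa;\tau))=\pi_r(\tau)=\pi_r(\kappa)$ for regular $r$) is an equilibrium cell of $\SWITCH^{n+1}(Z;\tau)$, so $\Phi^{n+1}_j(\kappa^{n+1})=0$ by Theorem \ref{thm: FP}, and Lemma \ref{lem: Phi and Phid equal} gives $\Phi_j(\kappa)=0$. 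If $j$ is a singular direction of $\kappa$, then by the dichotomy its whole cycle $d$ is singular in $\kappa$, so $\sC^d(\kappa^d;\tau)=\tau^d$ is an equilibrium cell of $\SWITCH^d(Z;\tau)$, whence $\Phi^d(\tau^d)=0$ and Lemma \ref{lem: Phi and Phid equal} again gives $\Phi_j(\kappa)=0$. The remaining case — $j$ singular in $\tau$ but regular in $\kappa$, so the whole cycle $d$ is regular in $\kappa$ — is not covered by Lemma \ref{lem: Phi and Phid equal} and is the crux: the cone $\sC^d(\kappa^d;\tau)$ extends $\pi_j(\kappa)$ by replacing the endpoint $\ne\theta_{(j+1)j}$ with $\infty$ (or with $0$), on this cone $\Lambda^d_j$ equals $\Lambda_j(\kappa)$, so one of $\cL^d(\sC^d(\kappa^d;\tau),j,\pm)$ is automatically $-1$ (the $\infty$ side) or $+1$ (the $0$ side, using $L,U>0$) while the other coincides with the value of $\cL(\kappa,j,\cdot)$ at the endpoint $\theta_{(j+1)j}$ of $\pi_j(\kappa)$; because $\sC^d(\kappa^d;\tau)$ is an equilibrium cell of $\SWITCH^d(Z;\tau)$, $\Phi^d_j=0$ pins down this second label, and $d$-consistency of $\kappa^d$ (Proposition \ref{prop: d-consistent}, with $\Lambda_j(\tau^{\pm}_{j-1})=\Lambda_j(\kappa)$ by Lemma \ref{lem: top cells}) delivers exactly the value of $\cL(\kappa,j,\cdot)$ at the other endpoint; the two being opposite gives $\Phi_j(\kappa)=0$. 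I expect the only real obstacle to be the bookkeeping in this last case — matching the labeling maps of $\SWITCH(Z)$ at $\kappa$ and of $\SWITCH^d(Z;\tau)$ at the cone $\sC^d(\kappa^d;\tau)$, tracking which endpoint of $\pi_j(\kappa)$ is $\theta_{(j+1)j}$, and reading off the corresponding entry of the table of Proposition \ref{prop: d-consistent}; everything else is an assembly of results already established.
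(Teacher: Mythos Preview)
Your proposal is correct and follows essentially the same route as the paper: reduce to Theorem~\ref{thm: FP}, use Lemma~\ref{lem: Eq} for necessity of $\kappa\in\Eq(\tau)$, invoke Lemma~\ref{lem: Phi and Phid equal} for regular directions of $\tau$ and singular directions of $\kappa$, and for the remaining case ($j$ singular in $\tau$ but regular in $\kappa$) match $\Phi_j(\kappa)=0$ against the table of Proposition~\ref{prop: d-consistent} via $\Lambda_j(\kappa)=\Lambda_j(\tau_{j-1}^{\pm})$. The one place where you are \emph{more} careful than the paper is the loop-characteristic step: you explicitly derive the dichotomy that, for $\kappa\in\Eq(\tau)$, each cycle $c_d$ is either fully singular or fully regular in $\kappa$ (via Lemmas~\ref{lem: inessential cfs}--\ref{lem: essential cfs}), which is what justifies applying $\Phi$ to $\kappa$ and what guarantees that $j-1$ is regular in $\kappa$ whenever $j$ is in the crux case; the paper uses both facts without stating them.
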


\begin{proof}
 Let $\kappa\in\cN(\tau)$. By Lemma \ref{lem: Eq}, $\kappa\in\Eq(\tau)$ is a necessary condition for $\kappa$ to be an equilibrium cell. 
 We may therefore assume $\kappa \in \Eq(\tau)$. 
 
    By \red{Theorem} \ref{thm: FP}, $\kappa$ is an equilibrium cell if and only if $\Phi_j(\kappa) = 0$ for every $j\in V$. Lemma \ref{lem: Phi and Phid equal} shows that if $r$ is a regular direction of $\tau$ then $\Phi_r(\kappa) = \Phi_r^{n+1}(C(\kappa;\tau);\tau)$ and if $s\in \{s_d +1,\ldots, s_d+\ell_d\}$ is a singular direction of $\kappa$ then  $\Phi_s(\kappa) = \Phi_s^d(C(\kappa;\tau);\tau)$. 
    It remains to show that $\Phi_j(\kappa) = 0$ if and only if $\kappa^d$ is $d$-consistent when $j\in\{s_d+1,\ldots,s_d+\ell_d\}$ is a regular direction of $\kappa$ but singular direction of $\tau$. There are four cases determined by the values of $\pi_j(\kappa)$ and $\pi_{j-1}(\kappa)$ indicated by the table in the statement of Proposition \ref{prop: d-consistent}. We prove the case $\pi_j(\kappa) = (\theta_{\rho_-(j)j},\theta_{\rho(j)j})$ and $\pi_{j-1}(\kappa) = (\theta_{\rho_-(j-1)(j-1)},\theta_{j(j-1)})$. The remaining cases are similar. Since $\pi_{j-1}(\kappa) = (\theta_{\rho_-(j-1)(j-1)},\theta_{j(j-1)})$ we have $\Lambda_j(\kappa) = \Lambda_j(\tau_j^-)$ so that 
    \[
        \cL(\kappa,j,-) = \sgn(-\gamma_j\theta_{\rho_-(j)j} + \Lambda_j(\tau_j^-)). 
    \]
 Therefore $\cL(\kappa,j,-) = 1$ if and only if $\kappa^d$ is $d$-consistent. Since $\kappa^d\in\Eq^d(\tau)$,
 \[
 \cL^d(C(\kappa^d;\tau),j,-) = \sgn(-\gamma_j\cdot 0 + \Lambda_j^d(C(\kappa^d;\tau))) = 1 = -\cL^d(C(\kappa^d;\tau),j,+). 
 \]
 But $\cL^d(C(\kappa;\tau),j,+) = \sgn(-\gamma_j\theta_{\rho(j)j} + \Lambda_j^d(C(\kappa;\tau)) = \cL(\kappa,j,+)$ since $\Lambda_j^d(C(\kappa;\tau)) = \Lambda_j(\kappa)$. Therefore $\Phi_j(\kappa) = 0$ if and only if $\kappa^d$ is $d$-consistent, completing the proof of the theorem. 
 
\end{proof}

\red{
\begin{example}\label{ex: candidate eq cells}
    
     Consider the decomposition of the positive toggle plus system described in Example \ref{ex: decomp}. The cell complexes $\chi^1(\til{\tau})$ and $\chi^2(\til{\tau})$ with corresponding labeling maps $\cL^1$ and $\cL^2$ are pictured in Figure \ref{fig: cones}(c) and (d). $\SWITCH^1(Z;\til{\tau})$ admits three $1$-candidate equilibrium cells given by $\til{\tau}$, $\til{\tau}_1^-$ and $\til{\tau}_1^+$. Applying Proposition \ref{prop: d-consistent}, we have $\til{\tau}$ is a consistent equilibrium cell vacuously, $\til{\tau}_1^-$ is not a consistent equilibrium cell since $\Lambda_1(\til{\tau}_1^-) = L_{11}U_{12}<\gamma_1\theta_{21}$, and $\til{\tau}_1^+$ is a consistent equilibrium cell since $\Lambda_1(\til{\tau}_1^+) = U_{11}U_{12} < \gamma_1\theta_{\infty 1} = \infty$. $\SWITCH^2(Z;\til{\tau})$ admits one $2$-candidate equilibrium cell, $\til{\tau}_2^-$. Since $\Lambda^2(\til{\tau}_2^-) = L_{22}U_{21} > \gamma_2 \theta_{21}$, it is a consistent equilibrium cell. By Theorem \ref{thm:decomp}, the equilibrium cells of $\SWITCH(Z)$ which are contained in $\cN(\tau)$ are given by a product over projection of the consistent equilibrium cells, namely $\til{\tau}_2^- = \pi_1(\til{\tau}) \times \pi_2(\til{\tau}_2^-)$ and $\kappa_4 = \pi_1(\til{\tau}_1^+) \times \pi_2(\til{\tau}_2^-)$. 
\end{example}
}

\subsection{Stability of Equilibria}\label{sec: gen net stability}

The decomposition in Theorem~\ref{thm:decomp}
implies that the Jacobian $J(\eps)$ for $\cS(Z,\eps)$ has a block structure up to order $\eps$. In particular, we have that there is a $B\in \bR^{N\times N}$ such that
\begin{align*}
    J(\eps) = \left(
    \begin{array}{ccccc}
     J^1(\eps) \\ 
     & J^2(\eps) \\
     & & \ddots \\
     & & & J^n(\eps) \\
     & & & & J^{n+1}(\eps)
    \end{array}
    \right) + \eps B, \qquad 
     J^{n+1} = \left(
     \begin{array}{cccc}
        -\gamma_{s_n + 1}  \\
        & -\gamma_{s_n + 2} \\
        & & \ddots \\
        & &  & -\gamma_{N}
    \end{array}
    \right),
\end{align*}
and $J^d(\eps)$ for $1\leq d \leq n$ has the same structure as \eqref{eq: CFS Jacobian} although the entries $\sigma_{(j+1)j}'$ are replaced with $\frac{\partial}{\partial x_j} \Lambda_{j+1}(x;\eps)$. The block $J^d(\eps)$ is the Jacobian for the sigmoidal system $\cS^d(Z,\eps;\tau)$ obtained from perturbing the switching system $\SWITCH^d(Z;\tau)$. This implies that the stability of an equilibrium $x^\eps$ associated to an equilibrium cell $\kappa\in \cN(\tau)$ is determined by the stability of $\kappa^d$ as an equilibrium cell of $\SWITCH^d(Z;\tau)$ for $d=1,\ldots, n+1$. We formally state this observation as a theorem after the following definition.  

\begin{definition}
 An equilibrium cell $\kappa\in \cN(\tau)$ is \emph{$d$-stable} if $\kappa^d$ is stable as a cell of $\SWITCH^d(Z;\tau)$, and \emph{$d$-unstable} otherwise. 
\end{definition}

\begin{theorem}\label{thm: d-stability}
 An equilibrium cell $\kappa\in\cN(\tau)$ is stable if and only if $\kappa$ is $d$-stable for each $d$. 
\end{theorem}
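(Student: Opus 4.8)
The plan is to leverage the block structure of the Jacobian $J(\eps)$ that was just established, together with the standard perturbation theory for eigenvalues. Recall that $J(\eps) = \mathrm{diag}(J^1(\eps),\ldots,J^{n+1}(\eps)) + \eps B$, where the off-diagonal block contributions are $O(\eps)$. The key point is that the equilibrium $x^\eps$ associated to $\kappa$ has $x^\eps \in \cN(\tau)$, and by the decomposition of $\SWITCH(Z)$ on $\cN(\tau)$, the derivatives $\frac{\partial}{\partial x_j}\Lambda_{j+1}(x^\eps;\eps)$ appearing in $J^d(\eps)$ are exactly the Jacobian entries of the perturbed cyclic system $\cS^d(Z,\eps;\tau)$ evaluated at the $d$-th projection $(x^\eps)^d$, which is precisely the equilibrium of $\cS^d(Z,\eps;\tau)$ associated to $\kappa^d$. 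So the block-diagonal part of $J(\eps)$ is, up to $O(\eps)$, the direct sum of the Jacobians $J^d$ of the $n+1$ subsystems at their respective equilibria.

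The first step is to record that $\kappa$ is $d$-stable if and only if $J^d(\eps)$ has all eigenvalues in the open left half-plane for all sufficiently small $\eps>0$, and $d$-unstable if and only if $J^d(\eps)$ has an eigenvalue with positive real part for all small $\eps > 0$; this is just the definition of stability of an equilibrium cell (Definition \ref{defn: cell stability}) applied to $\SWITCH^d(Z;\tau)$ and its sigmoidal perturbation $\cS^d(Z,\eps;\tau)$, combined with the Hartman--Grobman theorem to pass between linearized and nonlinear stability. One subtlety: one must note that the case $M = \prod\gamma_j$ (or $M = \sec(\pi/N)^N$) — a bifurcation value — does not occur along the perturbation curve for small $\eps$ when $\kappa$ is an equilibrium cell, because the relevant quantity $M(x^\eps,\eps)\to 0$ or $\to\infty$ as $\eps\to 0$ by the arguments preceding Propositions \ref{prop: pos sing cell unstable} and \ref{prop: neg sing cell stability}; hence each block is genuinely hyperbolic for small $\eps$, with a definite stable/unstable verdict.

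The second step is the eigenvalue bookkeeping. Since $\mathrm{spec}(J(\eps)) \to \bigcup_d \mathrm{spec}(J^d(0^+))$ continuously as $\eps\to 0$ (eigenvalues depend continuously on matrix entries, and $\eps B$ is a vanishing perturbation), and since for small $\eps>0$ each $J^d(\eps)$ is hyperbolic, we get: if every $\kappa^d$ is $d$-stable, then every block $J^d(\eps)$ has spectrum strictly in the left half-plane, so by continuity $J(\eps)$ does too for small $\eps$, giving stability of $x^\eps$. Conversely, if some $\kappa^{d_0}$ is $d_0$-unstable, then $J^{d_0}(\eps)$ has an eigenvalue $\lambda$ with $\mathrm{Re}\,\lambda > 0$ bounded away from $0$ (uniformly in small $\eps$, again by the $M\to 0$ or $M\to\infty$ estimate giving a definite spectral gap), and this eigenvalue persists as an eigenvalue of $J(\eps)$ up to $O(\eps)$, so $x^\eps$ is unstable. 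This establishes the biconditional.

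The main obstacle I anticipate is making the "up to $O(\eps)$" rigorous — specifically, verifying that the off-diagonal entries of $J(\eps)$ really are $O(\eps)$ and that the eigenvalue perturbation is controlled uniformly. The off-diagonal smallness follows from Lemma \ref{lem: top cells}: for $\kappa \in \cN(\tau)$ and directions in different cycles, the relevant $\sigma_{ij}$ factors are evaluated away from their thresholds (they are constant on $\cN(\tau)$ in those directions), so by property (3) of Definition \ref{defn: sigmoid} their derivatives are $O(\eps)$; the cross-block entries of the Jacobian are sums of products that each contain such a factor. Given that, one applies a standard result (e.g. continuity of the spectrum, or Gershgorin-type localization around each block's spectrum) to conclude. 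I would present this carefully but without grinding through the index chase, citing Definition \ref{defn: sigmoid}(3) and Lemma \ref{lem: top cells} for the off-diagonal estimate and invoking continuity of eigenvalues for the conclusion.
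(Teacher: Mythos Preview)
Your proposal is correct and matches the paper's approach: the paper records the block-diagonal-plus-$O(\eps)$ structure of $J(\eps)$ and then asserts Theorem~\ref{thm: d-stability} as an immediate observation, without supplying any further argument. Your sketch is in fact more detailed than the paper's own treatment, correctly isolating the off-diagonal estimate (via Definition~\ref{defn: sigmoid}(3) and Lemma~\ref{lem: top cells}) and the hyperbolicity of each block as the points that require care.
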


 As a result of Theorem \ref{thm: d-stability}, we can immediately generalize the stability results given by Propositions \ref{prop: reg cell stable}, \ref{prop: pos sing cell unstable}, \ref{prop: neg N<3 stable}, and \ref{prop: neg sing cell stability}, which results in the following theorem. 
 
 \begin{theorem}
 Let $Z = (L,U,\theta,\Gamma)$ be a switching parameter, $\tau$ be a loop characteristic cell, and $\kappa\in \cN(\tau)$ be an equilibrium cell. Then $d$-stability of $\kappa$ can be determined as follows. 
 \begin{enumerate}
     \item If $\kappa^d$ is a regular cell then $\kappa$ is $d$-stable. 
     \item If $\kappa^d$ is a singular cell and $c_d$ is positive, then $\kappa$ is $d$-unstable. 
     \item If $\kappa^d$ is a singular cell, $c_d$ is negative, and $\ell_d \leq 2$, then $\kappa$ is $d$-stable. 
     \item If $\Gamma = I$, $\kappa^d$ is a singular cell, $c_d$ is negative, and $\ell_d > 2$, then $\kappa$ is $d$-unstable. 
 \end{enumerate}
 \end{theorem}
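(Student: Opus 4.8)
The plan is to derive this theorem as a direct corollary of Theorem~\ref{thm: d-stability} together with the stability results already established for cyclic feedback systems. Recall that Theorem~\ref{thm: d-stability} reduces stability of the equilibrium cell $\kappa$ to $d$-stability for each $d=1,\ldots,n+1$, where $d$-stability of $\kappa$ is stability of $\kappa^d$ as an equilibrium cell of the cyclic feedback system $\SWITCH^d(Z;\tau)$ (for $d\le n$) or of the diagonal system $\SWITCH^{n+1}(Z;\tau)$. So it suffices to verify each of the four listed conditions by invoking the appropriate earlier proposition for the subsystem $\SWITCH^d(Z;\tau)$, whose associated network $\bRN^d(\tau)$ is a CFN with sign $\sgn(c_d)$ and length $\ell_d$.

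First I would handle case (1): if $\kappa^d$ is a regular cell of $\SWITCH^d(Z;\tau)$, then for $d\le n$ the associated equilibrium $x^{d,\eps}$ of $\cS^d(Z,\eps;\tau)$ converges to a regular cell, so Proposition~\ref{prop: reg cell stable} gives that $\kappa^d$ is stable; for $d=n+1$ the subsystem is diagonal with entries $-\gamma_j<0$, hence trivially stable. Either way $\kappa$ is $d$-stable. For case (2), $\kappa^d$ is a singular cell, which for a CFS means $\kappa^d=\tau^d$, the unique fully-singular loop characteristic cell of $\SWITCH^d(Z;\tau)$; since $c_d$ is positive, $\bRN^d(\tau)$ is a positive CFN, and Proposition~\ref{prop: pos sing cell unstable} gives that $\tau^d$ is unstable, i.e.\ $\kappa$ is $d$-unstable. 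For case (3), $c_d$ negative and $\ell_d\le 2$ means $\bRN^d(\tau)$ is a negative CFN with at most two nodes, so Proposition~\ref{prop: neg N<3 stable} applies and $\tau^d$ is stable, i.e.\ $\kappa$ is $d$-stable. For case (4), with the standing hypothesis $\Gamma=I$ (so that $\Gamma^d=I$ as well), $c_d$ negative and $\ell_d>2$ puts us in the setting of Proposition~\ref{prop: neg sing cell stability}, which gives that $\tau^d$ is unstable, i.e.\ $\kappa$ is $d$-unstable.

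The one point requiring a little care is the bookkeeping: one must check that the hypotheses "$\kappa^d$ is a singular cell'' and "$\kappa^d$ is a regular cell'' of $\SWITCH^d(Z;\tau)$ are exactly the dichotomy produced by Lemmas~\ref{lem: inessential cfs} and \ref{lem: essential cfs} (a CFS equilibrium cell is either the unique fully-singular cell $\tau^d$ or one of the regular cells $\kappa^L,\kappa^H$), and that when $\kappa^d$ is singular it is indeed the specific cell $\tau^d$ to which Propositions~\ref{prop: pos sing cell unstable}, \ref{prop: neg N<3 stable}, \ref{prop: neg sing cell stability} refer. One must also note that the $\Gamma=I$ hypothesis in case (4) is passed to the subsystem since $\Gamma^d$ is the corresponding diagonal block of $\Gamma$. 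None of this is a genuine obstacle — it is just a matter of matching definitions — so the main (and only mild) difficulty is organizing the four cases cleanly and confirming that the restriction-to-$\cN(\tau)$ subsystem genuinely falls under the hypotheses of the cited cyclic-feedback propositions.

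In summary, the proof is: fix $d$; by Lemmas~\ref{lem: inessential cfs}--\ref{lem: essential cfs} the cell $\kappa^d$ is either regular or equals $\tau^d$; apply Proposition~\ref{prop: reg cell stable} (or triviality of the diagonal block) in the regular case, and Propositions~\ref{prop: pos sing cell unstable}, \ref{prop: neg N<3 stable}, or \ref{prop: neg sing cell stability} according to $\sgn(c_d)$ and $\ell_d$ in the singular case; then invoke Theorem~\ref{thm: d-stability} to conclude that $\kappa$ is stable iff it is $d$-stable for every $d$, which yields the four stated criteria.
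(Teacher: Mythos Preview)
Your proposal is correct and matches the paper's approach exactly: the paper does not give a separate proof but simply states that the theorem follows immediately from Theorem~\ref{thm: d-stability} together with Propositions~\ref{prop: reg cell stable}, \ref{prop: pos sing cell unstable}, \ref{prop: neg N<3 stable}, and \ref{prop: neg sing cell stability}, which is precisely the argument you spell out. One minor note: since the theorem's conclusion is about $d$-stability itself (which by definition is stability of $\kappa^d$ in $\SWITCH^d(Z;\tau)$), you do not actually need to invoke Theorem~\ref{thm: d-stability} in the final step---the four propositions applied to the subsystem already give the result directly.
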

 
 We remark that an equilibrium cell $\kappa$ is always $d=n+1$-stable since the only equilibrium cell of $\SWITCH^{n+1}(Z;\tau)$ is regular. 

\red{
\begin{example}
    Consider the decomposition of the positive toggle plus system at $\til{\tau}$ pictured in Figure \ref{fig: cones}. The equilibrium cell $\kappa_4$ is $1$-stable because $\pi_1(\kappa_4) = C^1(\kappa_4;\til{\tau}) = C^1(\til{\tau}_1^+;\til{\tau})$ is stable in $\SWITCH^1(Z;\til{\tau})$ as can be seen in Figure \ref{fig: cones}(d). $\kappa_4$ is also $2$-stable because $\pi_2(\kappa_4) = C^2(\kappa_4;\til{\tau}) = C^2(\til{\tau}_2^-;\til{\tau})$ is stable in $\SWITCH^2(Z;\til{\tau})$ as can be seen in Figure \ref{fig: cones}(c). The equilibrium cell $\til{\tau}_2^-$ is $1$-unstable because $\pi_1(\til{\tau}_2^-) = \til{\tau}^1$ is unstable in $\SWITCH^1(Z;\til{\tau})$. 
\end{example}
}
\section{Proof of Theorem \ref{thm: FP}}\label{sec: eq cell proofs}

Before proving the theorem, we will prove some technical lemmas.
We begin by proving Lemma \ref{lem: top cells}.

\begin{proof}[Proof of Lemma \ref{lem: top cells}.]
 \red{Let $\tau\in \chi$ be a cell, $\kappa \in \cN(\tau)$ be a cell in the cell neighborhood $\cN(\tau)$ of $\tau$ (see Definition \ref{defn: cell neighborhood}), and $(j,i)\in E$ be an edge. }  Suppose $j$ is a regular direction of $\tau$ with $\pi_j(\tau) = (\theta_{i_1j},\theta_{i_2 j})$. Then $\sigma_{ij}(\tau)$ is well defined. Since  $\tau\subset \ol{\kappa}$, 
 \[
 (\theta_{i_1j},\theta_{i_2j}) = \pi_j(\tau)\subset\pi_j(\ol{\kappa}) = [\theta_{i_1'j},\theta_{i_2'j}].
 \] 
  Because $\theta_{i_1'j}<\theta_{i_2'j}$ are consecutive thresholds  we must have $\theta_{i_1j} = \theta_{i_1'j}$ and $\theta_{i_2j} = \theta_{i_2'j}$.  Therefore $\pi_j(\kappa) = \pi_j(\tau)$. This implies $\sigma_{ij}(\kappa) = \sigma_{ij}(\tau)$. 
  
  Now suppose $j\in\sd(\tau)$, and $\pi_j(\tau) = \{\theta_{i_0j}\}$.  Assume $i_0\neq i$. This implies that $\sigma_{ij}(\theta_{i_0 j})$ is defined and $\sigma_{ij}(\tau) = \sigma_{ij}(\theta_{i_0 j})$.  Let $\theta_{i_1 j}<\theta_{i_0j}<\theta_{i_2j}$ be consecutive thresholds. Then either $\pi_j(\kappa) = (\theta_{i_1 j},\theta_{i_0j})$, $\pi_j(\kappa) = (\theta_{i_0j},\theta_{i_2j})$, or $\pi_j(\kappa) = \{\theta_{i_0j}\}$. Since  $\theta_{ij}\notin (\theta_{i_1j},\theta_{i_2j})$, $\sigma_{ij}(x_j)=\sigma_{ij}(\tau)$ for all $x_j\in(\theta_{i_1j},\theta_{i_2 j})$ and $\sigma_{ij}(\kappa) = \sigma_{ij}(\tau)$. 
  
  Since $\Lambda_i$ is a product of sums of switching functions $\sigma_{ij}$,  $\Lambda_i(\kappa)$ is well defined whenever $\sigma_{ij}(\tau)$ is well defined for all $j\in\Sources(i)$. That is, $\Lambda_i(\kappa)$ is well defined whenever $i\notin \rho(\sd(\tau)) = \{\rho(j)\;|\; j\in \sd(\tau)\}$. 
\end{proof}


 \red{Recall that $\tau_s^\pm$ denotes an $s$-neighbor of a cell $\tau\in \chi$ (see Definition \ref{defn: neighbors}).} A consequence of Lemma \ref{lem: top cells} is that the labeling map $\cL$ and thus the flow direction map $\Phi$ \red{(see Definition \ref{defn: flow})}, are well defined because $\Lambda_{\rho(s)}(\tau_s^\pm)$ is well defined for every singular direction $s$ of a loop characteristic cell $\tau$.

\begin{lemma}\label{lem: sd labels}
 Let $\tau\in \chi$ be a loop characteristic cell. If $s$ is a singular direction of $\tau$ then $\Lambda_{\rho(s)}(\tau_s^\pm)$ is well defined. 
\end{lemma}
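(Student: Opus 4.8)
The plan is to deduce this directly from Lemma~\ref{lem: top cells}, whose ``consequently'' clause already pins down exactly when $\Lambda_i$ is well defined on a cell: it suffices that $i$ avoid the set $\{\rho^{\tau'}(j)\mid j\in\sd(\tau')\}$ of targets whose source direction is pinned to a threshold. So the entire content is to verify that, when $s\in\sd(\tau)$ and we pass to the neighbor $\tau_s^\pm$, the node $\rho(s)$ lies outside that bad set \emph{for the cell $\tau_s^\pm$}; this is precisely the place where the loop-characteristic hypothesis (that $\rho^\tau$ is a permutation of $\sd(\tau)$) gets used.

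First I would unpack the structure of $\tau_s^\pm$ from Definition~\ref{defn: neighbors}: an $s$-neighbor changes only the $s$-th projection, replacing $\{\theta_{\rho(s)s}\}$ by one of the two adjacent open intervals. Hence $\sd(\tau_s^\pm)=\sd(\tau)\setminus\{s\}$, and for every $j\in\sd(\tau_s^\pm)$ we have $\pi_j(\tau_s^\pm)=\pi_j(\tau)$, so $\rho^{\tau_s^\pm}(j)=\rho^\tau(j)=\rho(j)$. Consequently
\[
\{\rho^{\tau_s^\pm}(j)\mid j\in\sd(\tau_s^\pm)\}=\rho\big(\sd(\tau)\setminus\{s\}\big).
\]

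Second, since $\tau$ is a loop characteristic cell, $\rho$ restricts to a bijection of $\sd(\tau)$ onto itself, so the right-hand side equals $\sd(\tau)\setminus\{\rho(s)\}$, which does not contain $\rho(s)$. (Equivalently: if some $j\in\sd(\tau)$ with $j\ne s$ had $\rho(j)=\rho(s)$, injectivity of $\rho$ on $\sd(\tau)$ would force $j=s$.) Therefore $\rho(s)\notin\{\rho^{\tau_s^\pm}(j)\mid j\in\sd(\tau_s^\pm)\}$, and applying Lemma~\ref{lem: top cells} to the cell $\tau_s^\pm$ with the index $i=\rho(s)$ yields that $\Lambda_{\rho(s)}(\tau_s^\pm)$ is well defined, which is the claim.

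I do not expect a real obstacle here; the only care needed is the bookkeeping that $\rho^{\tau_s^\pm}$ agrees with $\rho^\tau$ on $\sd(\tau)\setminus\{s\}$, so that the permutation property transfers to the neighbor, together with the remark that $(s,\rho(s))\in E$ (so $\theta_{\rho(s)s}$ is a genuine threshold and $\Lambda_{\rho(s)}$ really depends on $x_s$). It is worth noting that the loop-characteristic assumption is essential: without it $\rho(s)$ could coincide with $\rho(j)$ for a second singular direction $j\ne s$ of $\tau$, and then $\sigma_{\rho(s)j}$ --- hence $\Lambda_{\rho(s)}$ --- would be undefined on $\tau_s^\pm$, because its threshold $\theta_{\rho(s)j}=\theta_{\rho(j)j}$ still appears as the pinned $j$-th coordinate of $\tau_s^\pm$.
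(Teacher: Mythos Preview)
Your proposal is correct and follows essentially the same approach as the paper's proof: both observe that $s$ is the unique singular direction of $\tau$ mapped to $\rho(s)$ (by the loop-characteristic permutation property), so removing $s$ from the singular set when passing to $\tau_s^\pm$ removes $\rho(s)$ from the image $\rho^{\tau_s^\pm}(\sd(\tau_s^\pm))$, and Lemma~\ref{lem: top cells} applies. Your write-up is more explicit about the bookkeeping $\sd(\tau_s^\pm)=\sd(\tau)\setminus\{s\}$ and $\rho^{\tau_s^\pm}|_{\sd(\tau_s^\pm)}=\rho^\tau|_{\sd(\tau_s^\pm)}$, which the paper leaves implicit.
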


\begin{proof}
 Since $s$ is the unique singular direction of $\tau$ which maps to $\rho(s)$, we have $\rho(s)\notin \rho^{\tau_s^\pm}(\sd(\tau_s^\pm))$ since $s$ is a regular direction of $\tau_s^\pm$. Lemma \ref{lem: top cells} then implies $\Lambda_{\rho(s)}(\tau_s^\pm)$ is well defined. 
\end{proof}

Having confirmed that $\cL$ and $\Phi$ are well defined on their domain, we now prove Theorem \ref{thm: FP}.

\subsection{Proof of Theorem \ref{thm: FP}}

Let $\tau\in\chi$ be an equilibrium cell and $x^{\eps}$ be an equilibrium of $\cS(Z,\eps)$ such that $x^\eps\to x^* \in\tau$ as $\eps\to 0^+$. Let $\rho = \rho^\tau$. 

First suppose that statement (1) does not hold. Then there is a singular direction $s\in\sd(\tau)$ so that $\rho(j)\neq s$ for all $j\in \sd(\tau)$. By Lemma \ref{lem: top cells}, $\Lambda_s(\tau)$ is well defined and  $\Lambda_s(\kappa) = \Lambda_s(\tau)$ for all $\kappa \in \cN(\tau)$. Therefore, for all $x\in \cN(\tau)$, $\Lambda_s(x;\eps)\to\Lambda_s(\tau)$  as $\eps \to 0$. Since $x^\eps\to x^* \in \tau$, there is an $A>0$ so that for  $\eps<A$, $x^\eps\in\cN(\tau)$. Therefore, $\Lambda_s(x^\eps,\eps)\to \Lambda_s(\tau)$ as $\eps \to 0$. Finally, since $\pi_s(\tau) = \{\theta_{\rho(s)s}\}$, we have $x_s^\eps\to \theta_{\rho(s)s}$ so that the $s$th component of $x^*$ satisfies $x^*_s =\theta_{\rho(s)s}$. These arguments imply  that taking the limit of the expression 
\[ \lim_{\eps\to 0^+} -\gamma_s x_s^{\eps} + \Lambda_s(x^{\eps};\eps) = 0 \]
results in 
\[ -\gamma_s \theta_{\rho(s) s} + \Lambda_s(\tau)= 0.\]
 But this contradicts the fact that $Z$ is a regular parameter.
This proves (1). 

To prove (2) we consider regular and singular directions separately.
 For a regular direction, $r$, let $x_r^* = \lim_{\eps\to 0} x_r^{\eps}$. We have
\begin{align*}
\lim_{\eps\to 0^+} -\gamma_r x_r^{\eps} + \Lambda_r(x^{\eps};\eps) = -\gamma_r x_r^{*} + \Lambda_r(\tau) = 0. 
\end{align*}
Since $x_r^*\in (\theta_{a_r r},\theta_{b_r r})$,  we must have $-\gamma_r \theta_{a_r r} + \Lambda_r(\tau) >0$ and $-\gamma_r \theta_{b_r r} + \Lambda_r(\tau) < 0$. Therefore, $\Phi_r(\tau) = 0$. 

For a singular direction, $s$,
 let  $\eta = \min\left\{\frac{\theta_{\rho_+(s) s} - \theta_{\rho(s)s}}{2},\frac{\theta_{\rho(s)s}-\theta_{\rho_-(s) s}}{2}\right\}$. Note that $\eta$ is independent of $\eps$. With this $\eta$, define the sample points $x^{\pm \eta,\eps}$ by 
\begin{align*}
 x^{\pm \eta,\eps}_j = \begin{cases}
                       x_j^{\eps}, \quad & j\neq s \\
                       x_s^{\eps}\pm \eta,  \quad & j = s.
                      \end{cases}
\end{align*}
Note that 
$x^{\pm\eta,\eps}\to \tau_s^\pm$ as $\eps\to 0^+$. Using the fact that $x^\eps$ is an equilibrium of the sigmoid system $S(Z,\eps)$, we have 
\begin{align*}
  0 =& -\gamma_{\rho(s)}x_{\rho(s)}^{\eps} + \Lambda_{\rho(s)}(x^{\eps};\eps) \\
  =& -\gamma_{\rho(s)}x_{\rho(s)}^{\eps} + \Lambda_{\rho(s)}(x^{\eps};\eps) - \Lambda_{\rho(s)}(x^{\pm \eta,\eps};\eps) + \Lambda_{\rho(s)}(x^{\pm \eta,\eps};\eps)\\
  =& -\gamma_{\rho(s)}x_{\rho(s)}^{\eps} + \left(\prod_{\substack{\ell=1 \\ s\notin I_\ell}}^{p_s} \sum_{j\in I_{\ell}} \sigma_{\rho(s)j}(x_j^{\eps};\eps)\right)(\sigma_{\rho(s)s}(x_s^{\eps};\eps) - \sigma_{\rho(s)s}(x_s^{\pm\eta,\eps};\eps)) + \Lambda_{\rho(s)}(x^{\pm \eta,\eps};\eps).
\end{align*}
Rearranging the resulting equality, we get
\begin{align}\label{eq: Phi_s = 0}
 \sgn(-\gamma_{\rho(s)}x_{\rho(s)}^{\eps}+ \Lambda_{\rho(s)}(x^{\pm \eta,\eps};\eps)) &= -\sgn\left((\sigma_{\rho(s)s}(x_s^{\eps};\eps) - \sigma_{\rho(s)s}(x_s^{\pm\eta,\eps};\eps))\right)
\end{align}
 The left hand side of \eqref{eq: Phi_s = 0} is $\cL(\tau,s,\pm)$ in the limit of $\eps\to 0$. To see this notice that since $\rho$ is a permutation, there is no $j\in\sd(\tau_{s}^\pm)$ so that $\rho^\tau(j) = \rho^\tau(s)$. By Lemma \ref{lem: top cells}, for every $x\in \cN(\tau_s^\pm)$, the function values $\Lambda_{\rho(s)}(x;\eps)\to\Lambda_{\rho(s)}(\tau_s^\pm)$ as $\eps \to 0^+$.  Since in the same limit $x^{\pm\eta,\eps}\to \tau_s^\pm$, there is an $A>0$ such  that for $\eps<A$,  we have $x^{\pm\eta,\eps}\in\cN(\tau_s^\pm)$. Therefore, $\Lambda_{\rho(s)}(x^{\pm\eta,\eps};\eps)\to \Lambda_{\rho(s)}(\tau_s^\pm)$ as $\eps \to 0^+$. Because the projection $\pi_{\rho(s)}(\tau) = \{\theta_{\rho^2(s)\rho(s)}\}$ we conclude that $x_{\rho(s)}^\eps\to \theta_{\rho^2(s)\rho(s)}$.
  To summarize, we have shown  
\begin{align}\label{eq: limit label map}
 \lim_{\eps\to 0^+} \sgn( -\gamma_{\rho(s)}x_{\rho(s)}^{\eps}+ \Lambda_{\rho(s)}(x^{\pm \eta,\eps};\eps)) = \sgn(-\gamma_{\rho(s)} \theta_{\rho^2(s)\rho(s)} + \Lambda_{\rho(s)}(\tau_s^{\pm})) = \cL(\tau,s,\pm).
\end{align}

To show that $\cL(\tau,s,+) = - \cL(\tau,s,-)$, we study the right hand side of \eqref{eq: Phi_s = 0}. By Properties (1) and (4) of sigmoidal perturbations, there is a neighborhood $\cU\subset \bR_+$ of $\theta_{\rho(s)s}$ such that $\sigma_{\rho(s)s}(\cdot;\eps)$ is  monotone increasing on $\cU$ when $\bs_{\rho(s)s}=1$ and decreasing when $\bs_{\rho(s)s}=-1$. Since $\sigma_{\rho(s)s}$ is monotone non-increasing or non-decreasing according to $\bs_{\rho(s)s}$, this implies  
\begin{align*}
    -\sgn\left((\sigma_{\rho(s)s}(x_s^\eps;\eps)-\sigma_{\rho(s)s}(x_s^{\pm\eta,\eps};\eps)\right) &=  -\sgn\left(x_s^\eps-x_s^{\pm\eta,\eps} \right)\bs_{\rho(s)s}  \\
    &=-(\mp 1) \bs_{\rho(s)s}= \pm \bs_{\rho(s)s}. 
\end{align*}

We have shown
\begin{align*}
    \cL(\tau,s,\pm) &= \lim_{\eps\to 0^+}\sgn( -\gamma_{\rho(s)}x_{\rho(s)}^{\eps}+ \Lambda_{\rho(s)}(x^{\pm \eta,\eps};\eps)) \\
    &= \lim_{\eps\to 0^+}-\sgn\left((\sigma_{\rho(s)s}(x_s^{\eps};\eps) - \sigma_{\rho(s)s}(x_s^{\pm\eta,\eps};\eps))\right) 
    = \pm \bs_{\rho(s)s}
\end{align*}
where the first equality follows from \eqref{eq: limit label map} and the second equality follows from \eqref{eq: Phi_s = 0}. 
Therefore $\cL(\tau,s,+) = - \cL(\tau,s,-)$ so that $\Phi_s(\tau) = 0$. 
This finishes the proof of (2) and thus the forward implication in statement (a). We now proceed with the backward implication in (a).

\red{
For a singular direction $s\in\sd(\tau)$, let $\cU_{2,s}(\eps)$ be the neighborhood of $\theta_{\rho(s)s}$ defined for $\sigma_{\rho(s)s}$ in property 4 of sigmoidal perturbations (Definition \ref{defn: sigmoid}). For a regular direction $r\notin \sd(\tau)$, we have $\Phi_r(\tau) = 0$ or 
\begin{align*}
    \sgn(-\gamma_r\theta_{a_r r} + \Lambda_r(\tau)) = -\sgn(-\gamma_r\theta_{b_r r} + \Lambda_r(\tau)). 
\end{align*}
Since $\Lambda_r(x;\eps)\to \Lambda_r(\tau)$ for $x\in \tau$, we may choose $\eta_r>0$ small enough so that 
\begin{align}\label{eq: reg sgn}
\sgn(-\gamma_r(\theta_{a_r r}+\eta_r) + \Lambda_r(\tau)) = -\sgn(-\gamma_r(\theta_{b_r r}-\eta_r) + \Lambda_r(\tau)). 
\end{align}

We now define a closed neighborhood $\tau(\eps)$ of $\tau$ by 
\[
    \pi_j(\tau(\eps)):= \begin{cases}
        \ol{\cU_{2,j}(\eps)}, \quad &j\in \sd(\tau) \\
        [\theta_{a_r r} + \eta_r,\theta_{b_r r} - \eta_r], \quad &j \notin \sd(\tau). 
    \end{cases}
\]
Note that $\tau\subset \tau(\eps)$. By properties 3 and 4 of sigmoidal perturbations we may choose $A\in \bR^{N\times N}$ so that for $\eps < A$ and $x\in \tau(\eps)$ 
\begin{align}\label{eq: derivatives}
    \left|\frac{\partial}{\partial x_s}\Lambda_{\rho(s)}(x;\eps)\right|\geq 2\gamma_{\rho(s)}, \quad \mbox{and} \quad \left|\frac{\partial}{\partial x_r} \Lambda_{r}(x;\eps)\right|\leq \tfrac{1}{2}\gamma_{r}. 
\end{align}
Property 4 implies that the image of $\Lambda_{\rho(s)}(\cdot;\eps)$ on $\tau(\eps)$ converges, as $\epsilon \to 0$, to the interval with endpoints $\Lambda_{\rho(s)}(\tau_s^-)$ and $\Lambda_{\rho(s)}(\tau_s^+)$, i.e. $\Lambda_{\rho(s)}(\tau(\eps);\eps)\to (\Lambda_{\rho(s)}(\tau_s^-),\Lambda_{\rho(s)}(\tau_s^+))$. Here  we have assumed without loss of generality that $\Lambda_{\rho(s)}(\tau_s^-)<\Lambda_{\rho(s)}(\tau_s^+)$. Since $\Phi_s(\tau) = 0$, we have $\gamma_{\rho(s)}\theta_{\rho^2(s)\rho(s)}\in (\Lambda_{\rho(s)}(\tau_s^-),\Lambda_{\rho(s)}(\tau_s^+))$. Therefore, we may further refine our choice of  $A$ so that $\gamma_{\rho(s)}\theta_{\rho^2(s)\rho(s)} \in \Lambda_{\rho(s)}(\tau(\eps);\eps)$ for each $\eps < A$. That is, for each $s\in \sd(\tau)$, there is an $x\in \tau(\eps)$ so that 
\[
    f_{\rho(s)}:=-\gamma_{\rho(s)}x_{\rho(s)} + \Lambda_{\rho(s)}(x;\eps) = \dot{x}_{\rho(s)}|_{x} = 0. 
\]
For a regular direction $r$, we also have that there is an $x\in \tau(\eps)$ with  
\[
    f_r:=-\gamma_{r}x_r + \Lambda_{r}(x;\eps) = \dot{x}_{r}|_x = 0
\]
by \eqref{eq: reg sgn}. 

 By \eqref{eq: derivatives}, if $x\in \tau(\eps)$ and $s\in \sd(\tau)$ is a singular direction or $r$ is a regular direction then
\[
    \left|\frac{\partial f_{\rho(s)}}{\partial x_s}(x)\right| > \gamma_{\rho(s)}>0 \quad \mbox{and} \quad \frac{\partial f_{r}}{\partial x_r}(x) < -\tfrac{1}{2}\gamma_{r}<0. 
\]

}

In either case, the derivatives are bounded away from zero so we may apply the implicit function theorem to get a function
\[
 X_j:\prod_{i\neq j} \pi_i(\red{\tau(\eps))} \to \pi_j(\red{\tau(\eps)})
\]
so that 
\[
\red{f_{\rho(j)}} ((x_1,\ldots,X_j(x_1,\ldots,x_{j-1},x_{j+1},\ldots,x_N),\ldots,x_N)) = 0.
\]

Define $g:\red{\tau(\eps)} \to \red{\tau(\eps)}$ by 
\begin{align*}
    g(x_1,\ldots,x_N) := (X_1(x_2,\ldots,x_N),\ldots,X_N(x_1,\ldots,x_{N-1})).
\end{align*}
By Brouwer's fixed point theorem, $g$ has a fixed point $x$ in $\red{\tau(\eps)}$ so that we can simultaneously solve $\dot{x}_j = 0$ for all $j$. 

To prove part (b) and show that
the fixed point is unique, we apply the \red{inverse} function theorem. Let $x^\eps$ be an equilibrium of $\cS(Z,\eps)$. We note that except for $O(\eps)$ terms, the Jacobian $J(\eps)$ has a block diagonal structure with blocks $J^d(\eps)$ corresponding to the cyclic feedback decomposition of $\SWITCH(Z)$ at $\tau$ (see Section \ref{sec: decomposition}). Each block corresponding to singular directions of $\tau$ has the form of \eqref{eq: CFS Jacobian} while the block corresponding to regular directions is diagonal. Therefore the determinant is given by 
\begin{align}
    \det(J(x;\eps)) = \prod_{d=1}^n(-1)^{\ell_d}\left( \prod_{s=s_d+1}^{s_d + \ell_d} \gamma_s - \sgn(c_d)M^d(x,\eps)\right)\left(\prod_{r=s_n +1}^N -\gamma_r\right) + O(\eps)
\end{align}
where $M^d(x,\eps) = \prod_{s=s_d+1}^{s_d + \ell_d} |\sigma'_{\rho(s)s}(x_s,\eps)|$. Since $M^d(x,\eps) \to \infty$ for $x\in \tau$ we have that for small enough $\eps$, $J(x;\eps)$ is non-singular on $\red{\tau(\eps)}$. The inverse function theorem then implies that the equilibrium $x^\eps$ of $\cS(Z,\eps)$ in $\red{\tau(\eps)}$ is unique.

 \section{Proof of Cyclic Feedback System Results}\label{sec: cfs results proofs}

In this section, we prove the results of Section \ref{sec: cfs results}. The section is organized into subsections which correspond to the subsections of Section \ref{sec: cfs results}. \red{Recall that the edges of a cyclic feedback network (CFN) are of the form $(j,j+1)$ where $j+1$ is computed modulo $N$ (see Definition \ref{defn: cfn}). \red{Furthermore,  by changing variables} we may assume without loss of generality 
that all edges of a positive CFN are activating and the edges of a negative CFN are all activating except for the edge $N\dashv 1$ which is repressing \red{(see Section \ref{sec: cfs equilibria})}. }

\subsection{Equilibrium Cells}\label{sec: cfn eq cell proofs}

\begin{proof}[Proof of Lemma \ref{lem: inessential cfs}]
Let $\kappa'$ be a regular equilibrium cell. We will show that $\kappa' = \kappa$ \red{where $\kappa$ is the claimed unique equilibrium cell defined in the statement of the lemma}. \red{Recall that for each $j\in V$, $\theta_{-\infty j} = 0$ and $\theta_{\infty j} = \infty$ (see Definition \ref{defn: cell complex}). We have \sout{First note that}}
 \begin{align*}
     \sgn(-\gamma_j\theta_{-\infty j} + \Lambda_j(\kappa')) &=\sgn(0+\Lambda_j(\kappa')) = 1, \mbox{ and }\\
     \sgn(-\gamma_j\theta_{\infty j} + \Lambda_j(\kappa')) &= \sgn(-\infty +\Lambda_j(\kappa')) = -1. 
 \end{align*}
 If $j$ is inessential and $\gamma_j\theta_{(j+1)j}<L_{j(j-1)}$ then  
\[\sgn(-\gamma_j\theta_{(j+1)j} + \Lambda_j(\kappa')) = -1.\]
If $j$ is inessential and $U_{j(j-1)}<\gamma_j\theta_{(j+1)j}$ then 
\[\sgn(-\gamma_j\theta_{(j+1)j} + \Lambda_j(\kappa')) = 1.\]
In either case, $\Phi_j(\kappa') = 0$ if and only if $\pi_j(\kappa') = \pi_j(\kappa)$. 

Now suppose that $k$ is essential and $k+1$ is inessential. If $\pi_k(\kappa') = (0,\theta_{(k+1)k})$, then, \red{using the assumption on the sign of the edges},
\begin{align*}
    \Lambda_{k+1}(\kappa') = \begin{cases}
     L_{(k+1)k}, \quad & \sgn(\rho) = 1 \mbox{ or } k<N\\
     U_{(k+1)k}, \quad & \sgn(\rho) = -1 \mbox{ and } k=N
    \end{cases}
\end{align*}
This implies
\begin{align*}
    \sgn(-\gamma_{k+1}\theta_{(k+2)(k+1)}+\Lambda_{k+1}(\kappa')) = \begin{cases}
     -1, \quad & \sgn(\rho) = 1 \mbox{ or } k<N\\
     1, \quad & \sgn(\rho) = -1 \mbox{ and } k=N
    \end{cases}
\end{align*}
so that $\Phi_{k+1}(\kappa') = 0$ if and only if $\pi_{k+1}(\kappa') = \pi_{k+1}(\kappa)$. A similar argument shows $\pi_{k+1}(\kappa') = \pi_{k+1}(\kappa)$ when $\pi_k(\kappa') = (\theta_{(k+1)k},\infty)$. An induction argument then shows that for every essential node $j$, $\pi_j(\kappa') = \pi_j(\kappa)$. 

\red{We have shown that the only regular equilibrium cell is $\kappa$. Since equilibrium cells are a subset of loop characteristic cells, to complete the proof
 we need only show that the only singular loop characteristic cell $\tau = \prod \{\theta_{(j+1)j}\}$ is not an equilibrium cell. }  Let $j$ be a node so that $j+1$ is inessential. Assume $U_{(j+1)j}<\gamma_{j+1}\theta_{(j+2)(j+1)}$. Then 
\begin{align*}
    \cL(\tau,j,+) = \cL(\tau,j,-)=-1
\end{align*}
since $\sgn(-\gamma_{j+1}\theta_{(j+2)(j+1)}+\Lambda_{j+1}(\tau))= -1$. Similarly, 
\begin{align*}
    \cL(\tau,j,+) = \cL(\tau,j,-)=1
\end{align*}
when $\gamma_{j+1}\theta_{(j+2)(j+1)}<L_{(j+1)j}$. \red{This shows $\Phi_j(\tau)\neq 0$ so that $\tau$ is not an equilibrium cell.}
\end{proof}

\begin{proof}[Proof of Lemma \ref{lem: essential cfs}]
 \red{Let $\bRN$ be a cyclic feedback network and $Z$ be a switching parameter so that the corresponding cyclic feedback system $\SWITCH(Z)$ has no inessential nodes.  }First we show \red{that the singular loop characteristic cell,} $\tau\red{=\prod \{\theta_{(j+1)j}\}}$, is an equilibrium cell. \red{Using the assumption on the sign of the edges,} for every $j\in V$ we have
 \begin{align*}
     \Lambda_{j+1}(\tau_j^-) = \begin{cases}
        L_{(j+1)j}, \quad & \sgn(\rho) = 1 \mbox{ or } j<N \\
        U_{(j+1)j}, \quad & \sgn(\rho) = -1 \mbox{ and } j=N
     \end{cases}
 \end{align*}
 and
 \begin{align*}
      \Lambda_{j+1}(\tau_j^+) = \begin{cases}
        U_{(j+1)j}, \quad & \sgn(\rho) = 1 \mbox{ or } j<N \\
        L_{(j+1)j}, \quad & \sgn(\rho) = -1 \mbox{ and }j=N.
     \end{cases}
 \end{align*}
 Since $L_{(j+1)j}<\gamma_{j+1}\theta_{(j+2)(j+1)}<U_{(j+1)j}$, this implies $\Phi_j(\tau) = 0$. $\tau$ is therefore an equilibrium cell by Theorem \ref{thm: FP}. 

Now consider the case $\bRN$ is a positive CFN. Since $\pi_j(\kappa^L) = (0,\theta_{(j+1)j})$ and all edges are activating, $\Lambda_{j+1}(\kappa^L) = L_{(j+1)j}$. We therefore have
\begin{align*}
    \cL(\kappa^L,j+1,+) = \sgn(-\gamma_{j+1}\theta_{(j+2)(j+1)}+\Lambda_{j+1}(\kappa^L)) = -1. 
\end{align*}
A similar argument shows $\cL(\kappa^H,j+1,-) = 1$ for all $j$. Since 
\[
\sgn(-\gamma_{j+1}\theta_{-\infty j} +\Lambda_{j+1}(\kappa)) = 1 \quad \mbox{ and } \sgn(-\gamma_{j+1}\theta_{\infty j} + \Lambda_{j+1}(\kappa)) = -1
\]
for all cells $\kappa$, this implies $\Phi_{j+1}(\kappa^L) = \Phi_{j+1}(\kappa^H) = 0$ so that both cells are equilibrium cells. 

Let $\kappa$ be a regular cell different from $\kappa^H$ and $\kappa^L$. Then there is a $j$ so that $\pi_{j}(\kappa) = (\theta_{(j+1)j},\infty)$ but $\pi_{j+1}(\kappa) = (0,\theta_{(j+2)(j+1)})$. $\pi_{j}(\kappa) = (\theta_{(j+1)j},\infty)$ implies $\Lambda_{j+1}(\kappa) = U_{(j+1)j}$. But $\pi_{j+1}(\kappa) = (0,\theta_{(j+2)(j+1)})$ implies 
\[
\cL(\kappa,j+1,+) = \sgn(-\gamma_{j+1}\theta_{(j+2)(j+1)} + U_{(j+1)j}) = 1
\]
so that $\Phi_j(\kappa) = 1$ and $\kappa$ is not an equilibrium cell. 

Finally, consider the case that $\bRN$ is a negative CFN. Let $\kappa$ be a regular cell. Suppose there is a $j<N$ such that 
\[
 \pi_j(\kappa) = (0,\theta_{(j+1)j}) \quad \mbox{ and } \quad \pi_{j+1}(\kappa) = (\theta_{(j+2)(j+1)},\infty).
\]
Then $\Lambda_{j+1} = L_{(j+1)j}$ so that 
\[
    \cL(\kappa,j+1,-) = \sgn(-\gamma_{j+1}\theta_{(j+2)(j+1)} + L_{(j+1)j}) = -1.
\]
This implies $\Phi_{j+1}(\kappa) = -1$ so that $\kappa$ is not an equilibrium cell. Similarly, if 
\[
    \pi_j(\kappa) = (\theta_{(j+1)j},\infty) \quad \mbox{ and } \quad \pi_{j+1}(\kappa) = (0,\theta_{(j+2)(j+1)}),
\]
then $\Phi_{j+1}(\kappa) = 1$ and $\kappa$ is not an equilibrium cell. Suppose that $\pi_j(\kappa) = (0,\theta_{(j+1)j})$ for each $j$. Then since $N\dashv 1$ is repressing, 
\[
    \cL(\kappa,1,+) = \sgn(-\gamma_1\theta_{21}+U_{1N}) = 1
\]
and $\Phi_1(\kappa) = 1$. Similarly, $\Phi_1(\kappa) = -1$ if $\pi_j(\kappa) = (\theta_{(j+1)j},\infty)$ for each $j$. Therefore there are no regular equilibrium cells. Since $\tau$ is the only singular loop characteristic cell and equilibrium cells are loop characteristic cells, $\tau$ is the unique equilibrium cell. 
\end{proof}

\subsection{Stability of Equilibria}\label{sec: cfn stability proofs}

We begin by providing the calculation of the characteristic polynomial of the Jacobian $J(x;\eps)$. \red{The Jacobian is given in \eqref{eq: CFS Jacobian}. }

\begin{proof}[Proof of Lemma \ref{lem: cfs det(J)}]
  Let $\til{J} = J(\eps) - \lambda I$. Let $S_N$ be the set of permutations of order $N$. For $\eta\in S_N$ let $\parity(\eta)$ denote the parity of $\eta$, i.e.  $\parity(\eta) = 1$ if $\eta$ is even and $\parity(\eta)=-1$ if $\eta$ is odd. The Liebniz Formula gives 
  \begin{align*}
      \det(\til{J}) = \sum_{\eta\in S_N}\parity(\eta) \prod_{i=1}^N \til{J}_{\eta(i)i}.
  \end{align*}
  The only non-zero derivative of $\Lambda_i$ is the derivative with respect to $x_{i-1}$. Therefore, the only non-zero entries of $\til{J}$ are the diagonal entries $J_{ii} = -\gamma_i-\lambda$ and the entries 
  \[
    \til{J}_{i(i-1)} = \frac{\partial}{\partial x_{i-1}}\Lambda_i = \sigma_{i(i-1)}'. 
  \]
  The only non-zero entries in the sum then correspond to the identity permutation, $\mathsf{id}$ and the permutation $\rho$. Since $\parity(\mathsf{id}) = 1$ and $\parity(\rho) = (-1)^{N-1}$,
  \begin{align*}
      \det(\til{J}) &= \prod_{i=1}^N (-\gamma_j-\lambda) + (-1)^{N-1}\prod_{i=1}^N \sigma_{i(i-1)}'\\
       &= (-1)^N\left(\prod_{i=1}^N (\gamma_i+\lambda) -\prod_{i=1}^N \sigma_{i(i-1)}'\right)\\
       &= (-1)^N\left(\prod_{i=1}^N (\gamma_i+\lambda) -\sgn(\rho)\prod_{i=1}^N M \right)
  \end{align*}
 \end{proof}
 
To determine stability of positive cyclic feedback systems, we apply Descartes' rule of signs and the fact that $J$ is a Metzler matrix (off diagonal entries are non-negative) so that the eigenvalue with largest real part is real (see Theorem 4 of \cite{benvenuti04}). 
\begin{proof}[Proof of Proposition \ref{prop: pos cycle stability}]
   Let $p(\lambda) := (-1)^N\det(J - \lambda I)$ be the characteristic polynomial of $J(x,\eps)$ normalized so that the leading coefficient is positive. Notice that $p(\lambda)$ has all positive coefficients except for possibly the coefficient of $\lambda^0$ which is given by $(-1)^N\det(J) = \prod_j \gamma_j - M(x,\eps)$. If $M(x,\eps) > \prod_j \gamma_j$ then by Descartes' rule of signs, $p$ has a positive real root so that $x$ is unstable. If $M(x,\eps) < \prod_j \gamma_j$, then by Descartes' rule of signs, $M(x,\eps)$ has no positive real roots. Theorem 4 of \cite{benvenuti04} says that $J$ has a real eigenvalue with largest real part. Since $J$ has no positive real roots, this eigenvalue must be negative, implying that $x$ is asymptotically stable. If $M(x,\eps) = \prod_j \gamma_j$ then $\det(J) = 0$ so that $\cS(Z,\eps)$ has a bifurcation at $x$. 
\end{proof}

The stability of equilibria of negative cyclic feedback systems when $N\leq 2$ involves only a simple computation.
\begin{proof}[Proof of Proposition \ref{prop: neg N<3 stable}]
    If $N=1$, then $x=x_1$ and $\dot{x} = -\gamma_1 x + \sigma_{11}(x;\eps)$. Since $\sigma_{11}(x;\eps)$ is non-increasing we have $J(x;\eps) = -\gamma_1 + \sigma_{11}'(x;\eps) \leq -\gamma_1$ so that the equilibrium of $\cS(Z,\eps)$ is stable. If $N=2$, then $\det(J(x;\eps)) = \gamma_1\gamma_2 - \sigma_{21}'(x;\eps)\sigma_{12}'(x;\eps) \geq \gamma_1\gamma_2>0$ since $\sigma_{12}$ is non-decreasing and $\sigma_{12}$ is non-increasing. We also have that the trace of $J(x;\eps)$ is negative. \red{Since $\det(J(x;\eps)>0$ and $\Tr(J)<0$},  the eigenvalues of $J$ have negative real part. 
\end{proof}

\begin{proof}[Proof of Propositions \ref{prop: pos sing cell unstable} and \ref{prop: neg sing cell stability}]
In proving Theorem \ref{thm: FP}, we showed that for $\eps$ small enough the equilibrium $x^\eps$ which converges to $\tau$ satisfies $x^\eps \in \cU_j(\eps)$ where $\cU_j$ is defined as in property (4) of Definition \ref{defn: sigmoid}. Therefore $M(x,\eps) \to \infty$ as $\eps \to 0$. Applying Proposition \ref{prop: pos cycle stability} or \ref{prop: neg cfs stability} as appropriate then proves the propositions. 
\end{proof}

\section{Discussion}\label{sec: discussion}
In this paper we present  explicit and direct correspondence between equilibria of systems of differential equations with sigmoidal nonlinearities and equilibrium objects that are associated to a switching system. ODE models associated to switching systems
are not well defined for points that lie on the family of thresholds associated to these functions. Because of the difficulties that this presents for construction of a well defined flow, we prefer  to think of a switching system not as an ODE model, but as a source of combinatorial (i.e. finite) data that can be used to study sigmoidal systems. Following this philosophy we build upon work of others~\cite{veflingstad07,ironi11} to show that all equilibria and their stability for sufficiently steep  sigmoidal functions can be determined from the data associated to the corresponding switching system. Dissecting further this  rigidity,  the sufficient data consists of network structure and a discrete description of parameter regime in the terms of a set of monotone Boolean functions~\cite{Peter2020}. To facilitate this work, we realize that the dynamics in a neighborhood of loop characteristic cells, that contain so called singular equilibria of the switching system, can be fully understood as a product of cyclic feedback networks. These, in turn, have a simpler structure that can be fully analyzed.

There are several natural extensions of the present work. One set of questions involves asking how far the switching system can be perturbed while maintaining its predictions. Given a switching parameter, how steep must the sigmoidal functions be so that the equilibria given by the switching system data are maintained in the sigmoidal system? How can the switching parameter be chosen so that the equilibria are maintained for the shallowest possible sigmoids? We are currently working on this problem in the context of ramp systems, wherein the sigmoidal functions are replaced by continuous piece-wise linear functions. In this setting, explicit analytic results to these questions can be given.

Another set of questions involves non-stationary dynamics. The switching system dynamics can be represented by a state transition graph that has information not only about the equilibria, but also about recurrent and global dynamics of sigmoidal systems. What is the relationship between recurrent dynamics, say periodic trajectories, in the state transition graph, and periodic orbits and their stability in sigmoidal systems? 
We have already shown that the global dynamics of the STG is closely related to global dynamics of sigmoidal perturbations in two dimensional systems~\cite{us1}. We are currently working on a generalization of this result to higher dimensions. 
We believe that the results of this paper present only a first step in establishing a firm connection between dynamics of sigmoidal models of network dynamics and combinatorial dynamics of state transition graphs of switching systems.

\section*{Acknowledgements}
\red{
HK was supported by JSPS KAKENHI Grant Number 18H03671. HO was supported by JSPS KAKENHI Grant Number 16K03644 and 19K03644, and the Grant for Research of Ryukoku University(Tokubetsu-kenkyu) in the academic year 2018/4-2019/3. TG was partially supported by the National Science foundation under grant DMS-1839299, a DARPA contract FA8750-17-C-0054, and National Institutes of Health award R01 GM126555-01.
KM was partially supported by the National Science Foundation under awards DMS-1839294 and HDR TRIPODS award CCF-1934924, a DARPA contract HR0011-16-2-0033, and National Institutes of Health award R01 GM126555-01. WD was also supported by National Institutes of Health award R01 GM126555-01.
}

\bibliographystyle{siamplain}
 \bibliography{cont_ref}
\end{document}